\newtheorem{theorem}{Theorem}[section]
\newtheorem{lemma}[theorem]{Lemma}
\newtheorem{corollary}[theorem]{Corollary}
\newtheorem{definition}[theorem]{Definition}
\newtheorem{proposition}[theorem]{Proposition}
\newtheorem{remark}[theorem]{Remark}
\newtheorem{conjecture}[theorem]{Conjecture}
\numberwithin{equation}{section}
\newcommand{\norm}[1]{\left\|#1\right\|}
\newcommand{\abs}[1]{\left|#1\right|}
\DeclarePairedDelimiter{\ceil}{\lceil}{\rceil}
\newcommand{\T}{\ensuremath{\mathbb{T}}}
\newcommand*{\R}{\ensuremath{\mathbb{R}}}
\renewcommand*{\S}{\ensuremath{\mathcal{S}}}
\newcommand*{\N}{\ensuremath{\mathbb{N}}}
\newcommand*{\Z}{\ensuremath{\mathbb{Z}}}
\newcommand*{\Id}{\ensuremath{\mathrm{Id}}}
\newcommand*{\RR}{\ensuremath{\mathcal{R}}}
\def\dist{\mathop{\rm dist}\nolimits}    
\def\div{\mathop{\rm div}\nolimits}    
\def\supp{\mathop{\rm supp}\nolimits}    
\def\curl{\mathop{\rm curl}\nolimits}    
\def\dim{\mathop{\rm dim}\nolimits}
\title{Dimension of the singular set of wild H\"older solutions of the incompressible Euler equations}
\author{Luigi De Rosa}
\address{EPFL SB, Station 8, 
CH-1015 Lausanne, Switzerland}
\email{luigi.derosa@epfl.ch}
\author{Silja Haffter}
\address{EPFL SB, Station 8, 
CH-1015 Lausanne, Switzerland}
\email{silja.haffter@epfl.ch}
\date{\today}
\begin{document}

\begin{abstract}
For $\beta<\frac13$, we consider $C^\beta\left(\T^3\times [0,T]\right)$ weak solutions of the incompressible Euler equations that do not conserve the kinetic energy.
We prove that for such solutions the closed and non-empty set of singular times $\mathcal{B}$ satisfies $\dim_{\mathcal{H}}(\mathcal{B})\geq \frac{2\beta}{1-\beta}$.
This lower bound on the Hausdorff dimension of the singular set in time is 
intrinsically linked to the H\"older regularity of the kinetic energy and we conjecture it to be sharp. As a first step in this direction, for every $\beta<\beta'<\frac{1}{3}$ we are able to construct, via a convex integration scheme,  non-conservative $C^\beta\left(\T^3\times [0,T]\right)$ weak solutions of the incompressible Euler system such that $\dim_{\mathcal{H}}(\mathcal{B})\leq \frac{1}{2}+\frac{1}{2}\frac{2\beta'}{1-\beta'}$. 
The structure of the wild solutions that we build allows moreover to deduce non-uniqueness of $C^\beta\left(\T^3\times [0,T]\right)$ weak solutions of the Cauchy problem for Euler from every smooth initial datum. 
\end{abstract}

\maketitle

\par
\medskip\noindent
\textbf{Keywords:} incompressible Euler equations, dissipative distributional solutions, singular set, convex integration, Hausdorff dimension.
\par
\medskip\noindent
{\textbf{MSC (2020):} 	35Q31 - 35D30  - 35A02 - 28A78.
\par
}

\section{Introduction}

We consider the incompressible Euler equations 
\begin{equation}\label{E}
\left\{\begin{array}{l}
\partial_t v+\div (v\otimes v)  +\nabla p =0\\
\div v = 0,
\end{array}\right.
\end{equation}
in the spatial periodic setting $\T^3=\R^3\setminus \Z^3$,
where $v:\T^3\times [0,T]\rightarrow \R^3$ is a vector field representing the velocity of the fluid and $p:\T^3\times [0,T]\rightarrow \R$ is the hydrodynamic pressure.

We will consider weak solutions of the system \eqref{E}, namely divergence-free vector fields $v\in L^2\left(\T^3\times [0,T];\R^3\right)$ such that 
$$
\int_0^T\int_{\T^3}\left( v\cdot \partial_t \varphi+ v\otimes v : \nabla \varphi \right)\,dx\, dt=0,
$$
for all $\varphi\in C^\infty_c\left(\T^3\times (0,T);\R^3\right)$ with $\div \varphi=0$. The pressure does not appear in the weak formulation and it can be recovered as the unique zero average solution of 
\begin{equation}\label{Lapl_p}
-\Delta p=\div \div (v\otimes v),
\end{equation}
that can be formally derived by taking the (distributional) divergence of the first line of \eqref{E}. Thus, from now on, when we talk of a vector field $v$ as weak solution of \eqref{E}, it will be understood that the pressure can be derived a posteriori by solving the elliptic equation \eqref{Lapl_p}.

In the last decade, considerable attention has been devoted to the study of H\"older continuous weak solutions of \eqref{E}, since they naturally arise in incompressible hydrodynamics models, starting from the celebrated prediction of Kolmogorov's Theory of Turbulence \cite{K41}. In this context, one of the most investigated problems is unquestionably Onsager's conjecture on the kinetic energy conservation for H\"older continuous weak solutions of \eqref{E}. Indeed, in 1949, for solutions $v\in L^\infty\left((0,T);C^\beta\left(\T^3\right)\right)\,,$  Lars Onsager predicted that anomalous dissipation of the kinetic energy 
$$
e_v(t):=\frac{1}{2}\int_{\T^3} |v(x,t)|^2\,dx
$$
may occur only in the low regularity regime $\beta<\frac{1}{3}$, while in the case $\beta>\frac13$, some rigidity of the equation prohibits the existence of such wild non-conservative weak solutions. It is worth to mention that considering weak solutions $v\in C^\beta\left(\T^3\times [0,T]\right)$ is not more restrictive with respect to the ones considered by Onsager. Indeed, in \cite{Is2013}, it has been shown that every $v\in L^\infty\left((0,T);C^\beta\left(\T^3\right)\right)$ enjoys the same $\beta-$H\"older regularity in time (see also \cite{CD18} for a different proof).

The first proof of the rigidity for $\beta>\frac{1}{3}$ was given in \cite{CET94}, while in \cite{Is2018}, for any $\beta<\frac{1}{3}$, Philip Isett proved the existence of dissipative $\beta-$H\"older continuous weak solutions of \eqref{E}. His construction is based on the iterative convex integration scheme introduced by Camillo De Lellis and László Székelyhidi in the context of incompressible fluid dynamics (see  \cite{BDLIS15}, \cite{BDLSV2019} and \cite{DS2013} for a complete introduction on the topic).  In recent years, such techniques have also been applied to other models in fluid dynamics such as the hypodissipative Navier Stokes equations \cite{CDD18, DR19}, the SQG equation \cite{BSV19}, the MHD equations \cite{BBV20} and the quasigeostrophic equation \cite{N20}. Remarkably, these convex integration techniques led to a proof of non-uniqueness of weak solutions of the Navier-Stokes system in \cite{BV2017} and of non-uniqueness for the transport equation with Sobolev vector fields in \cite{MS18, BCDL20}.

A natural next question to ask is how irregular those wild solutions are, or, more precisely, how small their non-empty singular set can be. In the following, we will only consider the singular set in time, that is the smallest closed set $\mathcal{B}\subseteq [0,T]$ such that $v \in C^\infty\left(\T^3 \times \mathcal{B}^c\right)\,.$ 
This question has recently been investigated in \cite{BCV19} in the context of the Navier-Stokes equations, where the existence of wild $C^0\left([0,T];L^2\left(\T^3\right)\right)$ weak solutions whose singular set in time has Hausdorff dimension strictly less than $1$ has been established. Moreover, in the recent work \cite{CL20}, it has been shown that it is possible to construct non-conservative wild solutions of both Euler and Navier-Stokes equations whose singular set of times has arbitrarily small Hausdorff dimension if one requires only some low $L^p$ integrability in time $1\leq p<2\,.$ 
Specifically, in the context of the Euler equations, these solutions belongs to $L^{\sfrac{3}{2}-}\left([0,T]; C^{\sfrac{1}{3}}\left(\T^3\right)\right) \cap L^1\left([0, T]; C^{1-}\left(\T^3\right)\right)$ and do not possess a uniform in time regularity.

The question on the size of the singular set in time of wild $C^\beta\left(\T^3\times [0,T]\right)$
weak solutions of \eqref{E} as been raised in \cite{CL20} and has not yet been investigated.
In this work, we address this issue by studying the structure of the non-conservative weak solutions of Euler constructed in \cite{Is2018} and \cite{BDLSV2019}. We first prove that the singular set in time of such solutions cannot  be arbitrarily small. More precisely, we have the following
\begin{theorem}\label{t:lowerbound}
Let $0< \beta < \frac{1}{3}$ and $v\in C^\beta\left(\T^3\times[0,T]\right)$ be a non-conservative weak solution of \eqref{E}. If $\mathcal{B}\subseteq [0,T]$ is a closed set such that $v\in C^\infty\left(\T^3\times \mathcal{B}^c\right)\,,$ then
$\mathcal{H}^{\frac{2\beta}{1-\beta}}(\mathcal{B})>0\,.$ In particular, we have
$$
\dim_{\mathcal{H}}(\mathcal{B})\geq \frac{2\beta}{1-\beta}.
$$
\end{theorem}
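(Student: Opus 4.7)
The plan is to combine two facts. (a) For any $C^\beta$ weak solution of \eqref{E} with $\beta<\tfrac{1}{3}$, the kinetic energy satisfies the improved temporal regularity $e_v\in C^{\frac{2\beta}{1-\beta}}([0,T])$. (b) On every connected component of the open set $\mathcal{B}^c$ the vector field $v$ is smooth, hence a classical solution of Euler, and the standard energy identity shows that $e_v$ is constant there. Combining (a) and (b) with a covering argument yields: if $\mathcal{H}^{\frac{2\beta}{1-\beta}}(\mathcal{B})$ were zero, then $e_v$ would be constant on all of $[0,T]$, contradicting the non-conservation hypothesis. This gives $\mathcal{H}^{\frac{2\beta}{1-\beta}}(\mathcal{B})>0$ and therefore the claimed dimension bound.

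For (a) I would adapt the Constantin--E--Titi mollification analysis to the energy. Letting $v_\eps:=v*\phi_\eps$ be a spatial mollification at scale $\eps>0$ and $e_\eps(t):=\tfrac12\int_{\T^3}|v_\eps|^2\,dx$, the symmetrization identity
\[
e_v(t)-e_\eps(t)=\frac{1}{4}\iint\phi_\eps(y)\phi_\eps(z)\int_{\T^3}|v(x,t)-v(x+y-z,t)|^2\,dx\,dy\,dz
\]
combined with $v\in C^\beta$ yields $|e_v(t)-e_\eps(t)|\lesssim \eps^{2\beta}$ uniformly in $t$. Pairing the mollified momentum equation with $v_\eps$ and using incompressibility gives $\partial_t e_\eps=\int_{\T^3}\nabla v_\eps:\bigl((v\otimes v)_\eps-v_\eps\otimes v_\eps\bigr)\,dx$, and the classical commutator bounds $\|\nabla v_\eps\|_{L^\infty}\lesssim\eps^{\beta-1}$ and $\|(v\otimes v)_\eps-v_\eps\otimes v_\eps\|_{L^\infty}\lesssim \eps^{2\beta}$ lead to $|\partial_t e_\eps|\lesssim\eps^{3\beta-1}$. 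Consequently, for any $s<t$,
\[
|e_v(t)-e_v(s)|\lesssim \eps^{2\beta}+\eps^{3\beta-1}|t-s|,
\]
and the optimal choice $\eps\sim|t-s|^{1/(1-\beta)}$ produces the claimed $\frac{2\beta}{1-\beta}$-H\"older bound.

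The remaining step is the following elementary lemma: a function $f\in C^\alpha([0,T])$ that is locally constant on the complement of a closed set $E\subseteq[0,T]$ with $\mathcal{H}^\alpha(E)=0$ must be constant. Indeed, given $\delta>0$, cover the compact set $E$ by finitely many disjoint open intervals $\{(a_j,b_j)\}_{j=1}^N$ with $\sum_j(b_j-a_j)^\alpha<\delta$; local constancy together with continuity forces $f(b_j)=f(a_{j+1})$ for consecutive pairs (here $b_0:=0$, $a_{N+1}:=T$ after possibly adjusting the boundary), so a telescoping argument and the H\"older bound yield $|f(T)-f(0)|\leq L\sum_j(b_j-a_j)^\alpha<L\delta$; sending $\delta\to 0$ proves the claim. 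Applying this with $f=e_v$, $\alpha=\tfrac{2\beta}{1-\beta}$ and $E=\mathcal{B}$, and using that $e_v$ is \emph{not} constant by hypothesis, one obtains $\mathcal{H}^{\frac{2\beta}{1-\beta}}(\mathcal{B})>0$ and hence $\dim_{\mathcal{H}}(\mathcal{B})\geq\frac{2\beta}{1-\beta}$.

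The principal technical content is the sharp Onsager-type regularity in (a): the trivial estimate based on $v\in C^\beta$ alone produces only $\beta$-H\"older regularity of $e_v$ in time, and the improvement to the exponent $\frac{2\beta}{1-\beta}$ genuinely uses the Euler equation through the commutator structure of the flux and the optimal matching of the mollification scale $\eps$ with the time increment $|t-s|$. Once this regularity is available the rest of the proof reduces to the short covering argument above, which crucially exploits the purely temporal nature of the singular set $\mathcal{B}$.
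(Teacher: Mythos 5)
Your proposal is correct and follows essentially the same route as the paper: the key ingredients are (i) the intrinsic $\frac{2\beta}{1-\beta}$-H\"older regularity of $e_v$ (which the paper cites from \cite{Is2013} and \cite{CD18} rather than rederiving, but your Constantin--E--Titi mollification argument is precisely the known proof), (ii) constancy of $e_v$ on $\mathcal{B}^c$ by classical energy conservation, and (iii) a covering/telescoping argument showing that a $\theta$-H\"older function whose derivative vanishes off a closed set of zero $\mathcal{H}^\theta$-measure must be constant, which is exactly the paper's Lemma \ref{holder_hausdorff}. The only cosmetic difference is that you reduce to a finite disjoint interval cover via compactness and telescope along the gaps, whereas the paper sums oscillations over a (possibly countable) ball cover; both implementations of the same idea are valid.
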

The previous result is intrinsically related to the H\"older continuity of kinetic energy of the corresponding class of solutions. Indeed, a remarkable property of $\beta-$H\"older continuous weak solutions of \eqref{E} is that the corresponding kinetic energy $e_v$ enjoys the following peculiar H\"older regularity
\begin{equation}\label{e_holder}
|e_v(t)-e_v(s)|\leq C |t-s|^{\frac{2\beta}{1-\beta}},
\end{equation}  
which can also be viewed as a different proof of the energy conservation in the case $\beta>\frac13$ (since in this case $\frac{2\beta}{1-\beta}>1$). Since $e_v$ is moreover constant on $\mathcal{B}^c$, but not on $[0,T]$, Theorem \ref{t:lowerbound} quantifies how big $\mathcal{B}$ has to be in order to allow the energy $e_v$ to grow, in a $C^{2\beta/(1-\beta)}$ fashion, between its different values. In this way, Theorem \ref{t:lowerbound} is a consequence of a general property of non-constant H\"older continuous functions that increase only on a set of given Hausdorff dimension (see Lemma \ref{holder_hausdorff} below).

Property \eqref{e_holder} has been first observed in \cite{Is2013} for any $\beta\leq \frac{1}{3}$ and then generalized in \cite{CD18} for any value of $\beta$, in the slightly more general class of Besov regularity.
We also remark that recently in \cite{DT20}, property \eqref{e_holder} has been shown to be sharp in the Baire category sense, which was previously conjectured in \cite{IO16}.
Motivated by the sharpness of the energy regularity and its connection with the size of the set of singular times of non-conservative solutions, we make the following
\begin{conjecture}\label{conject}
For every $\beta < \frac{1}{3}$, there exists a non-conservative weak solution $v\in C^\beta\left(\T^3\times [0,T]\right)$ of \eqref{E} and a closed set $\mathcal{B}\subset [0,T]$, such that $v \in C^\infty\left(\T^3\times \mathcal{B}^c\right)$ and $\dim_{\mathcal{H}}(\mathcal{B})=\frac{2\beta}{1-\beta}$.
\end{conjecture}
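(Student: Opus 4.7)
\emph{Strategy.} I would attempt to prove the conjecture via a refined convex integration scheme, designed so that at each stage $q$ the Reynolds stress $R_q$ and the corresponding perturbation $w_{q+1}$ are supported in time within a shrinking neighborhood of a prescribed Cantor-type set $K\subset[0,T]$ of Hausdorff dimension exactly $d=\frac{2\beta}{1-\beta}$. I would start from a smooth subsolution $(v_0,p_0,R_0)$ with $\supp_t R_0$ contained in a neighborhood of $K$ but non-zero, so that the limit solution is non-conservative. The limit solution $v=v_0+\sum_{q\geq 1}w_q$ is smooth on the complement of
\begin{equation*}
\mathcal{B}=\bigcap_{Q\geq 1}\overline{\bigcup_{q\geq Q}\supp_t w_q}\subseteq K,
\end{equation*}
so $\dim_{\mathcal{H}}(\mathcal{B})\leq\dim_{\mathcal{H}}(K)=d$, and combined with Theorem \ref{t:lowerbound} this yields equality.

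\emph{Iteration and bookkeeping.} The usual parameters are $\lambda_q$, $\delta_q\sim\lambda_q^{-2\beta}$ and transport-time scale $\mu_q^{-1}\sim\lambda_q^{-(1-\beta)}$, the latter being precisely the time scale on which a $C^\beta$ flow advects at frequency $\lambda_q$. The key new ingredient would be to replace the standard global temporal cutoffs of the Mikado (or Beltrami) building blocks by cutoffs subordinate to a minimal covering $\{I_{q,j}\}_{j=1}^{N_q}$ of $K$ by intervals of length $\mu_q^{-1}$, where $N_q\sim\mu_q^{d}=\delta_q^{-1}$. This is the natural $d$-dimensional covering count, and it matches, scale by scale, the number $\delta_q^{-1}$ of "active slots" into which the energy increment $\delta_{q+1}$ has to be distributed in order to produce a prescribed energy profile $e_v\in C^{2\beta/(1-\beta)}$ that is constant on $K^c$. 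The gluing step à la \cite{BDLSV2019} is performed inside each $I_{q,j}$ by solving the classical Euler equations and patching at the endpoints; standard estimates then propagate the $C^\beta$ bound to the limit $v$.

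\emph{Energy profile.} One needs to build a reference energy $e=e(t)$ that is $C^{2\beta/(1-\beta)}$, non-constant, and whose increments $e(t)-e(s)\sim|t-s|^{2\beta/(1-\beta)}$ are realized exactly on $K$ (think of a self-affine Cantor staircase tailored to $K$). The prescribed energy at stage $q$ is then a mollification of $e$ at scale $\mu_q^{-1}$, distributed consistently across the $N_q$ active intervals, so the global counting and the amplitude $\delta_q^{1/2}$ of the perturbation fit together without waste.

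\emph{Main obstacle.} The reason the weaker bound $\tfrac12+\tfrac12\frac{2\beta'}{1-\beta'}$ appears in the authors' construction is that in current gluing technology each active interval of length $\mu_{q+1}^{-1}$ has to be inflated by a buffer on the same scale $\mu_{q+1}^{-1}$ (or even $\mu_q^{-1}$) for the mollification, transport, and stress-inversion estimates to close; equivalently, the effective support of $w_{q+1}$ covers $K$ at a scale roughly $\mu_{q+1}^{-1/2}$ rather than $\mu_{q+1}^{-1}$, producing the telltale factor $\tfrac12$ in the exponent. Closing the gap requires redesigning the gluing so that the buffer region occupies only a fraction $\mu_{q+1}^{-\alpha_q}$ of each active interval with $\alpha_q\to 0$, while still large enough to accommodate the Euler flow between the classical snapshots. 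I expect this to be the core difficulty: it likely demands either anisotropic space-time cutoffs for the Mikado flows, or new cancellation structure at the gluing level, so that the error stress of $w_{q+1}$ lives on a strictly finer temporal set than $w_{q+1}$ itself. Without such a refinement the counting argument above can only reproduce the authors' theorem, not the conjectured sharp value.
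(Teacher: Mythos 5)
You should first note that this statement is an open \emph{conjecture} in the paper: the authors state it but do not prove it, and you yourself correctly acknowledge in your closing paragraph that your argument ``can only reproduce the authors' theorem, not the conjectured sharp value.'' So there is no proof to compare your attempt against, and your self-assessment that the sharp bound is out of reach with these tools is correct and is essentially the paper's own position.

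Your strategy---time-localized gluing plus Mikado-flow convex integration, with a Cantor-type target set and a box-counting argument for the limit---is exactly what the paper does for the partial result $\dim_b(\mathcal{B})\leq\frac12+\frac12\frac{2\beta'}{1-\beta'}$ in Theorem~\ref{t:main}. However, your diagnosis of \emph{why} the factor $\frac12$ appears is off. You claim ``the effective support of $w_{q+1}$ covers $K$ at a scale roughly $\mu_{q+1}^{-1/2}$ rather than $\mu_{q+1}^{-1}$,'' but in the authors' scheme the perturbation is in fact supported on intervals of length $\tau_{q+1}=\lambda_q^{-\gamma}\theta_{q+1}$, strictly \emph{finer} than the gluing window $\theta_{q+1}\simeq\mu_{q+1}^{-1}$: there is no inflated buffer. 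The actual loss, which the authors isolate in Section~\ref{sec_gap}, is that the glued stress satisfies $\norm{\overline R_q}_0\simeq\norm{R_q}_0\,\lambda_q^{\gamma}$, because the temporal cutoffs $\eta_g^i$ must be steep at scale $\tau_{q+1}$ in order to compress the support while the stability estimates between adjacent exact solutions improve only over the full windows of length $\theta_{q+1}$. Combined with the requirement $\norm{\overline R_q}_0\simeq\delta_{q+1}$ this forces the inductive hypothesis $\norm{R_q}_0\simeq\delta_{q+1}\lambda_q^{-\gamma}$. Meanwhile the steep cutoff $\eta_p$ on the perturbation inserts a transport error $\simeq\tau_{q+1}^{-1}\delta_{q+1}^{1/2}\lambda_{q+1}^{-1}\simeq\delta_{q+1}^{1/2}\delta_q^{1/2}\lambda_q^{1+\gamma}\lambda_{q+1}^{-1}$ into $R_{q+1}$, which must be bounded by the inductive target $\delta_{q+2}\lambda_{q+1}^{-\gamma}$. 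Since $\gamma$ appears on \emph{both} sides---once through $\tau_{q+1}^{-1}$, once through the $\lambda_{q+1}^{-\gamma}$ coming from the worsened inductive hypothesis---the constraint becomes $(b+1)\gamma<(b-1)(1-\beta-2\beta b)$ rather than $\gamma<(b-1)(1-\beta-2\beta b)$; as $b\downarrow 1$ this exactly halves the attainable shrinking rate, which is the origin of the $\frac12$. The missing ingredient, as the authors themselves point out, is a gluing step that achieves $\norm{\overline R_q}_0\simeq\norm{R_q}_0$ with \emph{no} $\lambda_q^\gamma$ loss while still confining $\supp_t\overline R_q$ to intervals of length $\tau_{q+1}$; then the inductive hypothesis could be taken as $\norm{R_q}_0\lesssim\delta_{q+1}$, the $\lambda_{q+1}^{-\gamma}$ drops from the right-hand side, and $\gamma$ reaches the sharp threshold $(b-1)(1-\beta-2\beta b)$ that yields the conjectured $\frac{2\beta}{1-\beta}$. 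Your proposed remedies (anisotropic cutoffs, finer-than-support error stress) point in a reasonable direction but do not identify this specific mechanism.
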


 Observe that according to Theorem \ref{t:lowerbound} such a solution necessarily satisifes $\mathcal{H}^{\frac{2\beta}{1-\beta}}(\mathcal{B})>0\,.$  In this note, we make a first step towards the conjecture. More precisely, using the convex integration scheme of \cite{BDLSV2019} together with the time localization introduced in \cite{BCV19}, we prove the following
\begin{theorem}\label{t:main}
Let $0\leq \beta<\beta'<\frac{1}{3}$ and let $v_1,v_2\in C^\infty\left(\T^3\times [0,T]\right)$ be two smooth solutions of \eqref{E} such that $\int_{\T^3} v_1(x,t)\,dx=\int_{\T^3} v_2(x,t)\,dx$, for all $t\in [0,T]$. There exists $v\in C^\beta\left(\T^3\times [0,T]\right)$ which weakly solves \eqref{E} such that the following holds
\begin{itemize}
\item[(i)] $v\big|_{\left[0,{\sfrac{T}{3}}\right]}\equiv v_1$ and $v\big|_{\left[{\sfrac{2T}{3}}, T\right]}\equiv v_2$;
\item[(ii)] there exists a closed set  $\mathcal{B}\subset [0,T]$ such that $v\in C^\infty\left(\T^3\times \mathcal{B}^c\right)$ and $\dim_{\mathcal{H}}(\mathcal{B})\leq \frac{1}{2}+\frac{1}{2}\frac{2\beta'}{1-\beta'}$.
\end{itemize}
\end{theorem}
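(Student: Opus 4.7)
The plan is to construct $v$ via an iterative convex integration scheme modeled on \cite{BDLSV2019}, augmented by the temporal localization technique of \cite{BCV19}. I would initialize by choosing a smooth cutoff $\chi_0\in C^\infty([0,T])$ with $\chi_0\equiv 1$ on $[0,T/3]$ and $\chi_0\equiv 0$ on $[2T/3,T]$, and setting $v_0 := \chi_0 v_1 + (1-\chi_0) v_2$. Since $v_1, v_2$ are divergence-free with identical means, so is $v_0$, and defining $(p_0,\mathring R_0)$ by $\partial_t v_0 + \div(v_0\otimes v_0) + \nabla p_0 = \div \mathring R_0$ (with $p_0$ recovered from the usual elliptic equation) yields a smooth Euler-Reynolds pair whose stress is compactly supported in time in some $B_0\subset (T/3,2T/3)$.

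I would then carry out the iteration. Inductively, assume $(v_q,\mathring R_q)$ solves the Euler-Reynolds system with the standard estimates $\|v_q\|_{C^1_{x,t}}\lesssim \lambda_q \delta_q^{1/2}$ and $\|\mathring R_q\|_{C^0}\lesssim \delta_{q+1}$, where $\lambda_q = a^{b^q}$ and $\delta_q = \lambda_q^{-2\beta'}$, and that $\mathring R_q$ is supported in time on a set $B_q\subset B_{q-1}$ consisting of $N_q$ disjoint intervals of length $\tau_q$. The perturbation $w_{q+1}=v_{q+1}-v_q$ is built from the Mikado (or intermittent Beltrami) flows of \cite{BDLSV2019}, with amplitudes obtained by inverting $-\mathring R_q$, multiplied by a temporal cutoff $\chi_{q+1}$ supported on $B_{q+1}\subset B_q$ consisting of $N_{q+1}$ intervals of length $\tau_{q+1}$ chosen to cover $\supp_t \mathring R_q$; off $B_{q+1}$ the perturbation vanishes identically, so the new stress $\mathring R_{q+1}$ is essentially supported on $B_{q+1}$.

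Setting $\mathcal B := \bigcap_q B_q$, outside $\mathcal B$ only finitely many $w_{q+1}$ are nontrivial on any neighborhood, so $v := v_0 + \sum_{q\ge 0} w_{q+1}$ is smooth on $\mathcal B^c$; property (i) is immediate because every $\chi_{q+1}$ vanishes on $[0,T/3]\cup[2T/3,T]$. The nesting yields
\begin{equation*}
\dim_{\mathcal H}(\mathcal B)\leq \limsup_{q\to\infty}\frac{\log N_q}{\log(1/\tau_q)},
\end{equation*}
and the parameters $\tau_q, N_q$ can be tuned — driving $\tau_q$ down to the natural CFL time scale $\lambda_q^{\beta'-1}$ of the oscillations and $N_q$ just large enough to preserve the covering and stress bounds — so that this $\limsup$ equals $\frac{1}{2}+\frac{1}{2}\cdot\frac{2\beta'}{1-\beta'}$.

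The main obstacle will be closing the Euler-Reynolds iteration in the presence of the temporal cutoffs. The transition regions of $\chi_{q+1}$ introduce a contribution to $\mathring R_{q+1}$ of order $|\partial_t\chi_{q+1}|\|w_{q+1}\|_{C^0}\sim \tau_{q+1}^{-1}\delta_{q+1}^{1/2}$ which must be absorbed into an object of size $\delta_{q+2}$, while simultaneously $N_{q+1}$ must be kept small enough to achieve the target dimension and large enough to cover $\supp_t \mathring R_q$. Balancing these constraints against the convergence of $\sum\|w_{q+1}\|_{C^\beta}$ for $\beta<\beta'$ — which forces the standard regularity gap $\beta<\beta'$ — should produce the precise exponent $\frac{1}{2}+\frac{1}{2}\cdot\frac{2\beta'}{1-\beta'}$. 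Designing the Mikado-type building blocks so as to remain compatible with short time windows, without spoiling the geometric decomposition of the stress, is the technical heart of the argument.
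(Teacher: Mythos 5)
Your overall strategy — initialize with a cutoff gluing of $v_1,v_2$, iterate a convex integration scheme with Mikado flows, localize the perturbation in time with steep cutoffs, and read off the Hausdorff dimension from the nesting rate of the bad sets — is indeed the spirit of the paper's construction. However, there is a genuine gap in the core of the iteration.

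You propose to produce $v_{q+1}$ by adding a Mikado perturbation $w_{q+1}$, with amplitudes that invert $-\mathring R_q$, multiplied by a temporal cutoff $\chi_{q+1}$ supported on the new bad set $B_{q+1}\subset B_q$ of much smaller measure. But this cannot close the iteration: since $|B_{q+1}|\ll |B_q|$ and $\supp_t\mathring R_q$ has measure comparable to $|B_q|$, the cutoff perturbation can only cancel $\mathring R_q$ on $B_{q+1}$. The portion of $\mathring R_q$ living on $B_q\setminus B_{q+1}$ is left untouched and will reappear in $\mathring R_{q+1}$ at full strength, violating both the smallness estimate and the support localization. What is missing is the \emph{gluing step} of Isett, adapted here to the time-localized setting: one first mollifies $v_q$ at scale $\ell_q$, constructs local exact Euler solutions $v_i$ launched from $v_\ell(t_i)$ on a grid with spacing $\theta_{q+1}\sim(\delta_q^{1/2}\lambda_q)^{-1}$, and glues them with steep cutoffs whose derivatives are concentrated in intervals of length $\tau_{q+1}=\lambda_q^{-\gamma}\theta_{q+1}\ll\theta_{q+1}$. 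This produces a pre-perturbation velocity $\overline v_q$ whose stress $\overline R_q$ is \emph{already} supported on the small intervals, so the cutoff Mikado perturbation afterwards genuinely cancels the relevant stress. Skipping this step leaves no mechanism to compress the temporal support of the Reynolds error.

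A second, related issue is in your parameter choice: you take $\tau_q$ at the CFL scale $\lambda_q^{\beta'-1}$ and hope to tune $N_q$. But the Hausdorff exponent $\frac{1}{2}+\frac{1}{2}\frac{2\beta'}{1-\beta'}$ in the statement is not a free target; it is forced by the specific loss $\lambda_q^\gamma$ that the asymmetric gluing (interval length $\theta_{q+1}$, overlap length $\tau_{q+1}=\lambda_q^{-\gamma}\theta_{q+1}$) introduces into the estimate on $\overline R_q$. One needs $\tau_q$ strictly shorter than the CFL time $\theta_q$, and the admissible $\gamma$ is bounded by roughly $(b-1)(1-\beta-2\beta b)/(b+1)$, which is what produces the additive $\frac12$ in the dimension (rather than the conjectured $\frac{2\beta}{1-\beta}$). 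Finally, you will also need a rescaling step $(v_0,R_0)\mapsto(\varepsilon v_0(\cdot,\varepsilon\cdot),\varepsilon^2 R_0(\cdot,\varepsilon\cdot))$ to make the initial smooth pair satisfy the inductive estimates, since the amplitude $\delta_0^{1/2}$ is small and $v_1,v_2$ are arbitrary smooth solutions.
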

The previous result is in the spirit of the result \cite[Theorem 1.1]{BCV19} for the Navier-Stokes equations: as the former, it gives on one hand a strong non-uniqueness result for the Cauchy problem of the Euler equations.  Indeed, for any smooth initial datum $\overline  v\in C^\infty\left(\T^3\right)$ one can choose $v_1\in C^\infty\left(\T^3\times [0,T]\right)$ as the smooth solution such that $v_1(0,\cdot)=\overline  v$, where $T>0$ is its maximal time of existence, and as $v_2$ any stationary smooth solution which differs from $\overline  v$. This clearly shows that for every $\beta< \frac{1}{3}\,,$ $C^\beta\left( \T^3\times [0,T]\right)$ weak solutions are non-unique for every smooth initial datum. We remark that, in view of the weak-strong uniqueness result from \cite{BDS11}, our solutions can not be admissible, in the sense that they do not verify $e_v(t)\leq e_v(0)$ for every $t\in [0,T]$. For a non-uniqueness result on such solutions we refer to \cite{DS17, DRS20}, in which the $L^2-$density of wild initial data has been recently established up to the $\frac13 -$Onsager's critical threshold. On the other hand, Theorem \ref{t:main} builds solutions that are smooth outside a compact set of quantifiable Hausdorff dimension. The hypothesis on the spatial averages of the two smooth solution is just a standard compatibility condition, since every continuous solution of \eqref{E} preserves its mean on the torus.


The loss given by the gap $\beta'>\beta$ is typical of such iterative schemes as already observed in \cite[Theorem 1.1]{DT20}, while the gap between the Hausdorff dimension achieved in $(ii)$ and the one of Conjecture \ref{conject} is an outcome of the implementation of the time localization of \cite{BCV19} in the scheme of \cite{BDLSV2019}, that we believe could be improved. We postpone the technical discussion of this issue to Section \ref{sec_gap}.

\subsection*{Aknowledgements}
The authors aknowledge the support of the SNF Grant $200021\_182565$.
The authors would also like to thank Maria Colombo for her interest in this problem and the useful discussions about it.

\section{Outline of the proof and main iterative scheme}

In order to construct H\"older continuous solutions of Euler we will base our construction on the convex integration scheme proposed in \cite{BDLSV2019}. However, there will be two main differences: at first,  since the main goal of our Theorem \ref{t:main} is to ensure that the constructed solution is smooth in a large set of times, we need to introduce a time localization of the glued Reynolds stress as well as of the perturbation. This will be done by adapting the idea that has been introduced in \cite{BCV19} in the context of $L^p$-based convex integration for the incompressible  Navier-Stokes equations. Second, since our purpose in not to prescribe a given energy profile $e=e(t)$, we will avoid all the technicalities coming from the energy iterations. We remark that, even if an energy profile will not be prescribed, the failure of energy conservation will still be a consequence of Theorem \ref{t:main}, since we can glue two solutions $v_1$ and $v_2$ whose kinetic energy differs. We begin with the simple proof of Theorem \ref{t:lowerbound} and then we move on to the description of the main iteration.

\subsection{Proof of Theorem \ref{t:lowerbound}} 
The following lemma asserts that a $\theta-$H\"older continuous function, defined on a $1$-dimensional domain, cannot increase \textit{only} on a null set of the $\theta$-dimensional Hausdorff measure. Since we could not find a reference for it, we give a detailed proof.

\begin{lemma}\label{holder_hausdorff}
Let $e\in C^\theta\left([0,T]\right)$ for some $\theta\in (0,1)$ and let $\mathcal{B}\subset [0,T]$ be a closed set with $\mathcal{H}^\theta(\mathcal{B})=0$. If $\frac{d}{dt}e=0$ on $\mathcal{B}^c$, then $e(t)=e(0)$ for all $t\in [0,T]$.
\end{lemma}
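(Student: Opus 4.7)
The plan is to show that the range $e([0,T])$ has vanishing one-dimensional Hausdorff measure, and then conclude that $e$ is constant by exploiting the connectedness of $[0,T]$. The argument naturally splits into bounding $\mathcal{H}^1(e(\mathcal{B}))$ and $\mathcal{H}^1(e(\mathcal{B}^c))$ separately.

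For the complement, since $\mathcal{B}$ is closed in $[0,T]$ the open set $\mathcal{B}^c$ decomposes as an at most countable disjoint union of open intervals $\bigsqcup_i I_i$, namely its connected components. The hypothesis $\frac{d}{dt} e \equiv 0$ on $\mathcal{B}^c$ forces $e$ to be constant on each $I_i$, so $e(\mathcal{B}^c)$ is at most countable and in particular has vanishing $\mathcal{H}^1$-measure. For the singular part, I would invoke the standard scaling of Hausdorff measure under $\theta$-Hölder maps: given a $\delta$-cover $\{U_i\}$ of $\mathcal{B}$, the H\"older bound gives $\mathrm{diam}(e(U_i)) \leq C\,\mathrm{diam}(U_i)^\theta$, so that $\{e(U_i)\}$ is a $C\delta^\theta$-cover of $e(\mathcal{B})$ with
\begin{equation*}
\sum_i \mathrm{diam}(e(U_i)) \leq C \sum_i \mathrm{diam}(U_i)^\theta.
\end{equation*}
Taking the infimum over such covers and sending $\delta\to 0$ yields $\mathcal{H}^1(e(\mathcal{B})) \leq C\, \mathcal{H}^\theta(\mathcal{B}) = 0$.

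Combining the two bounds, the range $e([0,T]) = e(\mathcal{B})\cup e(\mathcal{B}^c)$ has vanishing one-dimensional Hausdorff measure. However, by continuity $e([0,T])$ is a connected subset of $\R$, and hence an interval; an interval with $\mathcal{H}^1$-measure zero must be a single point, so $e$ is constant on $[0,T]$ and in particular $e(t) = e(0)$ for every $t\in [0,T]$. The proof is elementary, with no serious obstacle; the only non-trivial ingredient is the H\"older scaling of Hausdorff measure, which follows directly from the definition.
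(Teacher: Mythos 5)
Your proof is correct, and it takes a genuinely different route from the paper. The paper estimates $\lvert e(t)-e(0)\rvert$ directly: it covers $\mathcal{B}$ by balls $B_{r_i}(t_i)$ with $\sum_i r_i^\theta<\varepsilon$, notes that $e$ cannot change outside the balls (since the complement sits in $\mathcal{B}^c$ where $e'\equiv 0$), and then bounds the contribution of each ball by its oscillation $\lesssim r_i^\theta$, giving $\lvert e(t)-e(0)\rvert\lesssim\varepsilon$. You instead look at the image: $e(\mathcal{B}^c)$ is at most countable because $e$ is constant on each connected component of the open set $\mathcal{B}^c$, while $e(\mathcal{B})$ has $\mathcal{H}^1$-measure zero by the standard scaling of Hausdorff measure under $\theta$-H\"older maps applied to $\mathcal{H}^\theta(\mathcal{B})=0$; then connectedness of $e([0,T])$ forces it to be a single point. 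Your version cleanly isolates the one nontrivial ingredient (the H\"older/Hausdorff scaling lemma, which you might cite rather than reprove) and replaces the somewhat informal ``total variation is absorbed by the covering balls'' step of the paper with the crisp topological fact that a connected null subset of $\R$ is a singleton. The paper's argument is more self-contained and elementary in flavor, avoiding any explicit appeal to Hausdorff measure of images, but at the cost of a covering/variation argument that deserves a slightly more careful accounting of the oscillation over each ball than the one-line display $\lvert e(t)-e(0)\rvert\leq\sum_i\lvert e(t_i-r_i)-e(t_i+r_i)\rvert$ suggests. Both proofs rest on the same core mechanism: $\theta$-H\"older functions cannot change appreciably on $\theta$-null sets.
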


\begin{proof}
Since $\mathcal{H}^\theta(\mathcal{B})=0$, for every $\varepsilon>0$, there exists a family of open balls $\left\{B_{r_i}(t_i) \right\}_i$, such that  $\mathcal{B}\subset \bigcup_i B_{r_i}(t_i)$ and 
\begin{equation}\label{small}
\sum_{i} r_i^\theta<\varepsilon.
\end{equation}
Moreover, since $\frac{d}{dt}e\Big|_{\mathcal{B}^c}=0$, then the function $e$ can not increase (nor decrease) on $\left(\bigcup_i B_{r_i}(t_i)\right)^c$. This implies that $\forall t\geq 0$, it holds
$$
|e(t)-e(0)|\leq \sum_i |e(t_i-r_i)-e(t_i+r_i)|,
$$
which together with the $\theta$-H\"older continuity of $e$ and \eqref{small}, allows us to conclude 
$$
|e(t)-e(0)|\leq C\sum_i r_i^\theta <C\varepsilon.
$$
The claim then follows since $\varepsilon>0$ was arbitrary.
\end{proof}
To prove Theorem \ref{t:lowerbound}, just notice that the kinetic energy $e_v$ of any solution $v\in C^\beta\left(\T^3\times [0,T]\right)\cap C^\infty\left(\T^3\times \mathcal{B}^c\right)$ always satisfies \eqref{e_holder}, and moreover, by the standard energy conservation for smooth solutions, we also get
$$
\frac{d}{dt}e\bigg|_{\mathcal{B}^c}=0.
$$
Then Lemma \ref{holder_hausdorff}, together with the assumption that $v$ is non-conservative, implies 
$\mathcal{H}^\frac{2\beta}{1-\beta}(\mathcal{B})>0$ and hence in particular
the desired lower bound $\dim_{\mathcal{H}}\mathcal{B}\geq \frac{2\beta}{1-\beta}$.

\subsection{Inductive proposition}\label{s:inductive_prop} For any index $q\in \mathbb{N}$ we will construct a smooth solution $(v_q,R_q)$ of the Euler Reynolds system on $\T^3 \times [0, T]$
\begin{equation}\label{ER}
\left\{\begin{array}{l}
\partial_t v_q+\div (v_q\otimes v_q)  +\nabla p_q =\div R_q\\
\div v_q = 0,
\end{array}\right.
\end{equation}
where $R_q$ is a symmetric matrix. The pressure $p_q$ will consequently be the unique zero average solution of 
\begin{equation}\label{e:pressure}
-\Delta p_q=\div \div (v_q \otimes v_q- R_q).
\end{equation}
For any integer $q\,,$ we define a frequency parameter $\lambda_q$ and an amplitude parameter $\delta_q$ by 
\begin{align*}
\lambda_q&=2\pi \lceil a^{b^q}\rceil,\\
\delta_q&=\lambda_q^{-2\beta},
\end{align*}
where $0<\beta<\frac{1}{3}$ is the regularity exponent of Theorem \ref{t:main}, $b>1$ is a number that is close to $1$ and $a\gg1$ is a large enough parameter that will be chosen at the end (depending on all the other parameters). We also introduce the parameter 
\begin{equation}\label{gamma:bound1}
\gamma \in (0, (b-1)(1-\beta)),
\end{equation}
which will be the key parameter to measure the smallness of the singular set of the solution $v$ that we construct, as well as the parameter $\alpha>0$, which will be chosen sufficiently small (depending on $\beta\,,$ $b$ and $\gamma$), together with the universal geometric constant $M>0$ that will be defined later in the construction.

At step $q$, we will assume the following iterative estimates on the couple $(v_q,R_q)$
\begin{align}
\|R_q\|_0&\leq \delta_{q+1}\lambda_q^{-\gamma-3\alpha} \label{R_q:C0_est}\,,\\
\|v_q\|_1&\leq M \delta_q^{\sfrac{1}{2}}\lambda_q\label{v_q:C1_est}\,,\\
\|v_q\|_0&\leq 1-\delta_q^{\sfrac{1}{2}} \label{vq:C0_est}\,.
\end{align}
Here the H\"older norms will always only measure the spatial regularity; in other words, we take the supremum in time of the corresponding spatial H\"older norm (see Appendix \ref{s:hoelder}).

We will also inductively assume that the vector field $v_q$ is an exact solution of \eqref{E} for a large set of times, or analogously that the support of the Reynolds stress $R_q$ is contained in a finite union of thiny time intervals. To this aim, we follow the scheme of time localizations introduced in \cite{BCV19} and, for $q \geq 1$, we introduce the following two parameters
\begin{align*}
\theta_q&=\frac{1}{\delta^{\sfrac{1}{2}}_{q-1}\lambda^{1+3\alpha}_{q-1}}\,,\\
 \tau_q&=\lambda_{q-1}^{-\gamma}\theta_q.
\end{align*}
For the special case $q=0\,,$ we set $\tau_0=\sfrac{T}{15}$, while for $\theta_0$ we don't need to assign any value.
Observe that for every $q \geq 1$, we have as a consequence on the bounds on $\gamma$ in \eqref{gamma:bound1} that
\begin{equation}\label{e:timeparameters}
\theta_{q+1}\ll \tau_q \ll \theta_q \ll 1 \,.
\end{equation}

In order to ensure $(ii)$ in Theorem \ref{t:main}, we split the time interval $[0,T]$ at step $q \geq 0$ into a closed good set $\mathcal{G}_q$ and an open bad set $\mathcal{B}_q$ such that 
$[0, T] =  \mathcal{G}_q \cup \mathcal{B}_q $ and $\mathcal{G}_q \cap \mathcal{B}_q= \emptyset \,.$ The Reynolds stress will be supported strictly inside the bad set and hence, on the good set $v_q$ will be a smooth solution of \eqref{E}. More precisely, we will inductively construct the sets $\mathcal{G}_q$ and $\mathcal{B}_q$ with the following properties:
\begin{enumerate}[label=(\roman*)]
\item\label{Bq:i} $\mathcal{G}_0:= [0,\sfrac{T}{3}] \cup [\sfrac{2T}{3}, T] \,,$
\item\label{Bq:ii} $\mathcal{G}_{q-1} \subset \mathcal{G}_q$ for all $q \geq 1\,,$
\item\label{Bq:iii} $\mathcal{B}_q$ is a finite union of disjoint open intervals of length $5 \tau_q\,,$
\item\label{Bq:iv} the size of $\mathcal{B}_q$ is shrinking in $q$ according to the rate
\begin{equation}\label{B_q:size}
\lvert \mathcal{B}_q \rvert \leq 10 \frac{\tau_q}{\theta_q} \lvert \mathcal{B}_{q-1} \rvert \quad \forall
 q\geq1,
\end{equation}
\item\label{Bq:v}  if $t \in \mathcal{G}_{q'}$ for some $q' <q$, then $v_q(t)= v_{q'}(t)\,,$
\item\label{Bq:vi} defining the ``real" bad set  $\widehat{\mathcal{B}}_q :=\{ t \in [0, T]:\, \dist(t, \mathcal{G}_q) > \tau_q \}$, we have that the Reynolds stress $R_q$ is supported inside $\widehat{\mathcal{B}}_q$, or in other words
\begin{equation}\label{R_q:zeroonbadset}
R_q(t) \equiv 0 \quad \text{ for all } t \in \widehat{\mathcal{B}}_q^c \,,
\end{equation} 
\item\label{Bq:vii} on the complement of the real bad set, $v_q$ (that from (vi) is a smooth solution of Euler) satisfies the better estimate 
\begin{equation}\label{v_q:betterestimate}
\norm{v_q(t)}_{N+1} \lesssim   \delta_{q-1}^{\sfrac{1}{2}} \lambda_{q-1}\ell_{q-1}^{-N} \quad \text{ for all } t \in \widehat{\mathcal{B}}_q^c \,,
\end{equation} 
for all $q\geq 1$, where $\ell_{q-1}$ is the mollification parameter, as introduced in \eqref{e:l}. Here, the symbol $\lesssim$ means that the constant in the inequality is allowed to depend on $N\,,$ but not on any of the parameters and, in particular, not on $q$.
\end{enumerate}

The following iterative proposition is the cornerstone of the proof of Theorem \ref{t:main}.
\begin{proposition}[Iterative Proposition]\label{p:iterativeprop}There exists a universal constant $M>0$ such that the following holds. Fix $0<\beta<\frac{1}{3}\,,$ $1< b < \frac{1-\beta}{2\beta}$ and 
\begin{equation}\label{gamma:bound2}
0< \gamma < \frac{(b-1)(1-\beta - 2\beta b)}{b+1}\,.
\end{equation} 
Then, there exists $\alpha_0=\alpha_0(\beta, b, \gamma)>0$ such that for every $0<\alpha< \alpha_0\,,$ there exists $a_0=a_0(\beta, b, \gamma, \alpha, M)$ such that for every $a \geq a_0$ the following holds.

Given a smooth couple $(v_q, R_q)$ solving \eqref{ER} on $\T^3 \times [0,T]$ with the estimates \eqref{R_q:C0_est}--\eqref{vq:C0_est} and a set $\mathcal{B}_q\subset [0,T]$ satisfying the properties {\normalfont  \ref{Bq:i}--\ref{Bq:vii}} above, there exists a smooth solution $(v_{q+1}, R_{q+1})$  to \eqref{ER} on $\T^3 \times [0,T]$ and a set $\mathcal{B}_{q+1}\subset [0,T]$ satisfying both the estimates  \eqref{R_q:C0_est}--\eqref{vq:C0_est} and the properties {\normalfont \ref{Bq:i}--\ref{Bq:vii}} with $q$ replaced by $q+1\,.$ Moreover, we have 
\begin{equation}\label{v_q+1-v_q}
\norm{v_{q+1}- v_q}_0 + \lambda_{q+1}^{-1} \norm{v_{q+1}- v_q}_1 \leq M \delta_{q+1}^{\sfrac{1}{2}} \,.
\end{equation}
\end{proposition}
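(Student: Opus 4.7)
The plan is to follow the convex integration scheme of \cite{BDLSV2019} and insert the time cut-offs of \cite{BCV19} in such a way that the new perturbation $w_{q+1}:=v_{q+1}-v_q$ is supported in a small neighborhood of the set where gluing is performed, and vanishes identically on the previously good set $\mathcal{G}_q$. The construction proceeds in three classical steps — mollification, gluing, perturbation — followed by the definition of $\mathcal{B}_{q+1}$ and the check of properties \ref{Bq:i}--\ref{Bq:vii}.

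First I would mollify $v_q$ and $R_q$ in the space variable at a scale $\ell_q\sim \lambda_q^{-1}(\delta_{q+1}/\delta_q)^{\sfrac12}\lambda_q^{-\alpha}$ (as in \eqref{e:l} from the paper) to obtain $(v_\ell,R_\ell)$ enjoying the standard commutator and derivative bounds. Because mollification acts only in space, the temporal support of $R_\ell$ is unchanged and remains inside $\widehat{\mathcal{B}}_q$. I would then perform the gluing of \cite{BDLSV2019}: partition $\widehat{\mathcal{B}}_q$ by a uniform grid $\{t_i\}$ with spacing $\theta_{q+1}$, solve the exact Euler equation starting from $v_\ell(\cdot,t_i)$ on each interval to obtain local exact solutions $v_i$, and glue them via a partition of unity $\{\chi_i\}$ whose cut-offs overlap on regions of length comparable to $\theta_{q+1}$. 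The resulting $(\bar v_q,\bar R_q)$ is an exact solution of Euler off the gluing regions, and $\bar R_q$ is supported in a collection of disjoint time intervals of length $\sim\theta_{q+1}$, one per overlap region, with the sharp stress estimate $\|\bar R_q\|_N+\theta_{q+1}\|(\partial_t+v_\ell\cdot\nabla)\bar R_q\|_N\lesssim \delta_{q+1}\ell_q^{-N+\alpha}$. Outside a $\tau_q$-neighborhood of $\mathcal{B}_q$, the profile $\bar v_q$ coincides with the exact flow of $v_\ell$ and hence satisfies \eqref{v_q:betterestimate} at the level $q+1$ via Schauder estimates for the Euler flow map.

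Next I would add the Mikado-based perturbation $w_{q+1}=w_{q+1}^{(p)}+w_{q+1}^{(c)}+w_{q+1}^{(t)}$ of \cite{BDLSV2019} with an additional temporal cut-off $\eta(t)$ equal to $1$ on a $2\tau_{q+1}$-neighborhood of $\mathrm{supp}(\bar R_q)$ and equal to $0$ outside a $3\tau_{q+1}$-neighborhood. The amplitudes are built from the matrix-valued Mikado coefficients applied to $\mathrm{Id}-\bar R_q/(\delta_{q+1}\rho)$ as in the reference, where the constant $M$ comes from the geometric lemma. The bound \eqref{v_q+1-v_q} follows from the pointwise estimate $\|w_{q+1}\|_0\lesssim \delta_{q+1}^{\sfrac12}$ together with Schauder/stationary-phase estimates used with $\lambda_{q+1}$, exactly as in \cite[Sect.~5]{BDLSV2019}. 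The new Reynolds stress $R_{q+1}=R^{\mathrm{lin}}+R^{\mathrm{transp}}+R^{\mathrm{Nash}}+R^{\mathrm{osc}}+R^{\mathrm{corr}}+R^{\mathrm{time}}$ is estimated with the stationary phase lemma and, thanks to the factor $\eta(t)$, inherits temporal support in the $3\tau_{q+1}$-neighborhood of $\mathrm{supp}(\bar R_q)$. Choosing $\alpha$ small and $a$ large then gives \eqref{R_q:C0_est}--\eqref{vq:C0_est} at level $q+1$; the bound \eqref{R_q:C0_est} is the most delicate and forces the constraint \eqref{gamma:bound2} on $\gamma$, because the oscillation error $R^{\mathrm{osc}}\sim \delta_{q+1}\lambda_q^{1+3\alpha}/(\delta_q^{\sfrac12}\lambda_{q+1})$ must be absorbed into $\delta_{q+2}\lambda_{q+1}^{-\gamma-3\alpha}$, which translates into the algebraic inequality $\gamma<\frac{(b-1)(1-\beta-2\beta b)}{b+1}$ after substituting the definitions of $\delta_q,\lambda_q$.

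Finally I would define $\mathcal{B}_{q+1}$ as the union of the $\tfrac52\tau_{q+1}$-open neighborhoods of the components of $\mathrm{supp}(\bar R_q)$; by construction each component has length $5\tau_{q+1}$, yielding \ref{Bq:iii}. Since $w_{q+1}$ vanishes on $\widehat{\mathcal{B}}_q^c\supset\mathcal{G}_q$, we obtain $\mathcal{G}_q\subset\mathcal{G}_{q+1}$ (property \ref{Bq:ii}) and \ref{Bq:v}. Property \ref{Bq:vi} is automatic from the temporal support of $R_{q+1}$, and \ref{Bq:vii} at level $q+1$ follows from the fact that on $\widehat{\mathcal{B}}_{q+1}^c$ one has $v_{q+1}=\bar v_q$ coinciding with an exact flow of $v_\ell$, so Schauder estimates propagate \eqref{v_q:betterestimate}. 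The counting estimate \eqref{B_q:size} comes from the observation that the number of gluing regions is at most $|\widehat{\mathcal{B}}_q|/\theta_{q+1}\lesssim |\mathcal{B}_q|/\theta_{q+1}$, each contributing an interval of length $5\tau_{q+1}$, so $|\mathcal{B}_{q+1}|\lesssim (\tau_{q+1}/\theta_{q+1})|\mathcal{B}_q|$. The principal difficulty, as already flagged in Section \ref{sec_gap}, will be juggling the three competing scales $\tau_{q+1}\ll\theta_{q+1}\ll\ell_q$: the cut-off $\eta$ introduces time derivatives of order $\tau_{q+1}^{-1}$ in $R^{\mathrm{time}}$ and these must still be absorbed into $\delta_{q+2}\lambda_{q+1}^{-\gamma-3\alpha}$, which is precisely what fixes the admissible range of $\gamma$ in \eqref{gamma:bound2} and, ultimately, the dimension exponent in Theorem \ref{t:main}.
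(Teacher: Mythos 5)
Your high-level plan (mollify, glue, perturb with Mikado flows, then localize in time) is the right framework, and the paper indeed organizes its proof as Proposition~\ref{p:glueing} followed by Proposition~\ref{p:perturbation}. However, your description of the gluing step has a genuine gap that would break the whole scheme.

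You describe the gluing exactly as in~\cite{BDLSV2019}: the cut-offs $\{\chi_i\}$ ``overlap on regions of length comparable to $\theta_{q+1}$,'' so that $\overline R_q$ is supported in disjoint intervals of length $\sim\theta_{q+1}$. With that gluing, the perturbation $w_{q+1}$ must be nontrivial on all of $\supp\overline R_q$ (otherwise the oscillation error $\mathcal{R}\div(\overline R_q+w_{q+1}\otimes w_{q+1})$ cannot cancel $\overline R_q$), hence $R_{q+1}$ is supported in intervals of length $\sim\theta_{q+1}$ as well. Your later claims — that the components of $\supp\overline R_q$ have length $5\tau_{q+1}$, and that the counting gives $|\mathcal{B}_{q+1}|\lesssim(\tau_{q+1}/\theta_{q+1})|\mathcal{B}_q|$ — are inconsistent with your own gluing: since the $\theta_{q+1}$-overlaps cover a fixed fraction of $\widehat{\mathcal{B}}_q$, one only gets $|\mathcal{B}_{q+1}|\gtrsim c\,|\mathcal{B}_q|$ for a fixed constant $c$, so the bad set does not shrink geometrically at the required rate and the box-counting dimension of $\mathcal{B}$ would be $1$. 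The temporal cut-off $\eta$ that you insert in the perturbation step cannot repair this: you (correctly) need $\eta\equiv 1$ on $\supp\overline R_q$, but that support is already too large.

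What the paper actually does (Lemma~\ref{l:cutoffglueing}) is to make the \emph{gluing} cut-offs $\eta_g^i$ themselves steep: each $\eta_g^i$ is still supported in an interval of size $\approx\theta_{q+1}$ centered at $t_i$, but $\partial_t\eta_g^i$ is supported in two tiny intervals $I_i, I_{i+1}$ of length $\tau_{q+1}$ and satisfies $\|\partial_t^N\eta_g^i\|_0\lesssim\tau_{q+1}^{-N}$, not $\theta_{q+1}^{-N}$. Since $\overline R_q$ is carried by $\partial_t\eta_g$, it is then supported in $\bigcup_i I_i$ with $|I_i|=\tau_{q+1}$, and $\mathcal{B}_{q+1}$ is obtained by enlarging each $I_i$ by $2\tau_{q+1}$, giving intervals of length exactly $5\tau_{q+1}$ as in~\ref{Bq:iii} and the shrinking rate~\eqref{B_q:size}. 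This steepening costs a factor $(\theta_{q+1}/\tau_{q+1})=\lambda_q^{\gamma}$ in the estimate of $\overline R_q$; to compensate, the inductive hypothesis~\eqref{R_q:C0_est} is deliberately strengthened to $\|R_q\|_0\leq\delta_{q+1}\lambda_q^{-\gamma-3\alpha}$ (note the extra $\lambda_q^{-\gamma}$ compared to~\cite{BDLSV2019}), and this is what is propagated by the stability estimates (Propositions~\ref{p:stabilityvi}, \ref{p:vectorpotentials}) and ultimately by~\eqref{param_ineq}. Your proposal neither introduces the steep gluing cut-offs nor the correspondingly strengthened inductive bound, so the estimate $\|\overline R_q\|_N\lesssim\delta_{q+1}\ell_q^{-N+\alpha}$ you quote has no justification in your scheme. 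Finally, a minor remark: the ordering of scales you state at the end is incorrect — one has $\ell_q\ll\tau_{q+1}\ll\theta_{q+1}$, not $\tau_{q+1}\ll\theta_{q+1}\ll\ell_q$.
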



The proof of the main inductive proposition will occupy almost all of the remaining paper; we give a sketch of the different steps in the proof in the sections \ref{s:glueing} and \ref{s:convexintegration}. Before doing so, we show how the size of the singular set in time is linked to the choice of the parameters and how the iterative proposition implies Theorem \ref{t:main}.

\subsection{Size of the singular set in time}\label{s:sizeofsingularset} From \eqref{R_q:zeroonbadset}, it follows that $v_q$ is a smooth solution to Euler on $\T^3 \times \mathcal{G}_q\,.$ Moreover, the estimate \eqref{v_q+1-v_q}, together with the fact that $R_q\rightarrow 0$ uniformly from \eqref{R_q:C0_est},  will ensure that $v_q$ converges strongly in $C^0\left(\T^3 \times [0,T]\right)$ to a weak solution $v$ of \eqref{E} (see proof of Theorem \ref{t:main}). Property \ref{Bq:v} guarantees that $v = v_q$ on $\mathcal{G}_q$ and hence the limit solution will be smooth in $\T^3 \times \mathcal{G}_q\,.$ Since this holds for every $q\geq 0$, we deduce that there exists a closed set $\mathcal{B}\subset [0,T]$, of zero Lebesgue measure, such that  $v \in C^\infty\left(\T^3 \times \mathcal{B}^c\right)$ and moreover,
\begin{equation}\label{B}
\mathcal{B}\subset \bigcap_{q \geq 0} \mathcal{B}_q \,.
\end{equation}
The shrinking rate \eqref{B_q:size} allows us to estimate the Hausdorff (in fact the box-counting) dimension of the right-hand side. Indeed, using also the definition of the parameters $\tau_q$, $\theta_q$ and $\lambda_q$, we have
\begin{equation}
\lvert \mathcal{B}_q \rvert \leq \lvert \mathcal B_0 \rvert \prod_{q'=1}^q 10\frac{\tau_{q'}}{\theta_{q'}}  = \frac{10^q T}{3}\prod_{q'=1}^q \lambda_{q'-1}^{-\gamma} \leq (40\pi)^q T a^{-\gamma \left(\frac{b^{q}-1}{b-1}\right)}\,.
\end{equation}
Since by (iii) every $\mathcal{B}_q$ is made of disjoint intervals of length $5 \tau_q$, this implies that for every $q\geq 0$, the set $\mathcal{B}_q$ (and hence $\mathcal B$) is covered by at most 
\begin{equation}
 (40\pi)^q T a^{-\gamma \left(\frac{b^{q}-1}{b-1}\right)} (5 \tau_q)^{-1}
\end{equation}
of such intervals. Since $\tau_q \rightarrow 0$ as $q \to \infty$, this shows that the box-counting dimension (and hence the Hausdorff dimension) of $\mathcal{B}$ is bounded by
\begin{align}\label{dim_est_B}
\dim_{b} (\mathcal{B}) &\leq \lim_{q \to \infty} -\frac{\log\Big( (40\pi)^q T a^{-\gamma \left(\frac{b^{q}-1}{b-1}\right)} (5 \tau_q)^{-1}\Big)}{\log(5\tau_q)} \nonumber \\
&\leq  1+\lim_{q \to \infty} \frac{\gamma (b^q-1) \log a}{(b-1) \log \tau_q} \nonumber\\
&=  1- \frac{\gamma b}{(b-1)(1-\beta +3\alpha +\gamma)}\,,
\end{align}
where in the last equality we used that by definition $\tau_q= \lambda_{q-1}^{-(1-\beta+\gamma+ 3 \alpha)}\,.$ Observe that for $\gamma$ in the range \eqref{gamma:bound1}, this dimension estimate makes sense for $\alpha$ small enough, that is $\dim_{b} (\mathcal{B}) \in (0, 1)\,.$

\subsection{Proof of Theorem \ref{t:main}} We fix $0<\beta<\beta' <{\sfrac{1}{3}}$ and we define the auxiliary parameter
\begin{equation*}
\beta'':=\frac{\beta+\beta'}{2}\,.
\end{equation*}
Let $1<b< \frac{1-\beta''}{2\beta''}$ and let $\gamma \in \left(0, \frac{(b-1)(1-\beta''-2\beta'' b)}{b+1}\right)$ yet to be chosen. We will apply Proposition \ref{p:iterativeprop} with the parameters $(\beta'', b, \gamma)$ and we therefore fix admissible parameters $\alpha \in (0, \alpha_0)$ and $a \geq a_0$, where $\alpha_0$ and $a_0$ are given by the proposition.

Let $v_1, v_2 \in C^\infty\left(\T^3 \times [0, T]\right)$ be two smooth solutions  of \eqref{E} with the same spatial average. We construct the desired gluing $v$ with an inductive procedure. To this aim, let $\eta:[0,T] \rightarrow [0,1]$ be a smooth cutoff function such that $\eta \equiv 1 $ on $[0, {\sfrac{2T}{5}}]$ and $\eta \equiv 0$ on $[{\sfrac{3T}{5}}, T]\,.$ Consequently, we define the starting velocity $v_0$ as
$$  
v_0 (x,t):= \eta(t) v_1(x,t) + (1-\eta(t)) v_2(x,t).
$$
In order to define the Reynolds tensor $R_0$, we recall the inverse divergence operator $\mathcal R$ from \cite{BDLSV2019}, that is defined as
\begin{equation}
\label{e:R:def}
\begin{split}
({\mathcal R} f)^{ij} &= {\mathcal R}^{ijk} f^k, \\
{\mathcal R}^{ijk} &:= - \frac 12 \Delta^{-2} \partial_i \partial_j \partial_k - \frac 12 \Delta^{-1} \partial_k \delta_{ij} +  \Delta^{-1} \partial_i \delta_{jk} +  \Delta^{-1} \partial_j \delta_{ik},
\end{split}
\end{equation}
when acting on zero average vectors $f$ and has the property that $\mathcal R f$ is symmetric and $\div ( {\mathcal R}  f) = f$. Thus, we define
\begin{align*}
R_0= \partial_t \eta \mathcal{R}(v_1-v_2) - \eta(1-\eta) (v_1-v_2)\otimes(v_1-v_2)\,.
\end{align*}
Note that, the first term in the definition of $R_0$ is well defined since $\int_{\T^3}(v_1-v_2)\,dx=0$ by assumption.

The smooth couple $(v_0, R_0)$ solves \eqref{ER} however, it does not verify the bounds \eqref{R_q:C0_est}--\eqref{vq:C0_est} at $q=0\,.$ To bypass this problem, we exploit the invariance of the Euler equations under the rescaling 
\begin{equation}\label{e:rescaling}
\left(v_0, R_0\right) \rightarrow \left(v_0^\varepsilon(x,t)= \varepsilon v_0(x,\varepsilon t), \,R_0^\varepsilon(x,t)= \varepsilon^2 R_0(x, \varepsilon t)\right)\,.
\end{equation}
Observe that $(v_0^\varepsilon, R_0^\varepsilon)$ is a smooth solution of \eqref{ER} on $\T^3 \times \left[0, \varepsilon^{-1} T\right]$, with the properties that
\begin{align}\label{e:v0property}
v_0^\varepsilon \big|_{\left[0, \sfrac{\varepsilon^{-1} T}{3}\right]}\equiv v_1^\varepsilon \quad \text{ and } \quad v_0^\varepsilon \big|_{\left[ \sfrac{ \varepsilon^{-1}2T}{3}, \varepsilon^{-1} T\right]}\equiv v_2^\varepsilon \,, \\
\norm{ R_0^\varepsilon}_0 = \varepsilon^2 \norm{ R_0}_0, \, \quad \norm{v_0^\varepsilon}_0= \varepsilon \norm{ v_0}_0 \quad  \text{ and } \norm{ v_0^\varepsilon}_1 = \varepsilon \|v_0\|_1\,. \nonumber
\end{align}
This allows to choose $\varepsilon$ small enough, depending on all the previous parameters and  additionally also on $T, v_1$ and $v_2$, in order to satisfy \eqref{R_q:C0_est}--\eqref{vq:C0_est} at $q=0\,.$ To be precise, we choose
\begin{equation}
\varepsilon= \min \bigg\{ \bigg(\frac{\delta_1 \lambda_0^{-(\gamma +3 \alpha)}}{\norm{R_0}_0} \bigg)^{\sfrac{1}{2}}, \frac{M \delta_0^{\sfrac{1}{2}}\lambda_0}{\norm{v_0}_0}, \frac{1-\delta_0^{\sfrac{1}{2}}}{\norm{v_0}_1}  \bigg\} .
\end{equation}

With this choice of $\varepsilon$, $(v_0^\varepsilon, R_0^\varepsilon)$ satisfies the estimates \eqref{R_q:C0_est}--\eqref{vq:C0_est} as well as the properties \ref{Bq:i}--\ref{Bq:vii} for $q=0$ (where for \ref{Bq:vii} the constant in the inequality \eqref{v_q:betterestimate} depends on $\varepsilon, \norm{v_1}_N, \norm{v_2}_N$). We then apply inductively Proposition \ref{p:iterativeprop}. We start from $q=0$ with the couple $(v_0^\varepsilon, R_0^\varepsilon)$ solving \eqref{ER} on  $\T^3 \times \left[0, \varepsilon^{-1} T\right]$ and the bad set $\mathcal{B}_0= \left(\sfrac{\varepsilon^{-1}T}{3}, \sfrac{\varepsilon^{-1}2T}{3}\right)$. In this way, we construct a sequence of smooth solutions $\big\{ (v_q^\varepsilon, R_q^\varepsilon) \big\}_{q \geq 0}$ to \eqref{ER} on  $\T^3 \times \left[0, \varepsilon^{-1} T\right]$, with estimates \eqref{R_q:C0_est}--\eqref{vq:C0_est} and \eqref{v_q+1-v_q}, and with the corresponding bad set $\mathcal{B}_q$ obeying \ref{Bq:i}--\ref{Bq:vii}. The bound \eqref{v_q+1-v_q} implies, together with the interpolation estimate \eqref{e:Holderinterpolation2}, that
\begin{equation*}
\sum_{q=0}^\infty \norm{ v_{q+1}-v_q}_{\beta} \lesssim \sum_{q=0}^\infty \norm{v_{q+1}-v_q}_1^{\beta} \norm{v_{q+1}-v_q}_0^{1-\beta} \lesssim \sum_{q=0}^\infty \lambda_{q+1}^{\beta -\beta''} \lesssim 1 \,.
\end{equation*}
Hence there a exists a strong limit
\begin{equation}
w= \lim_{q \to \infty} v_q^\varepsilon \in C^0\left(\left[0, \varepsilon^{-1} T\right], C^\beta \left(\T^3\right)\right) \,.
\end{equation}
By \eqref{R_q:C0_est} we have that $R_q^\varepsilon\rightarrow 0$ uniformly, which implies that the limit $w$ solves \eqref{E}. From \cite{CD18}, we also recover the regularity in time and we deduce that in fact $w \in C^\beta\left(\T^3 \times \left[0, \varepsilon^{-1} T\right]\right)$. 

Combining the properties \ref{Bq:i}, \ref{Bq:ii} and \ref{Bq:v} of the bad set and recalling the structure of $v_0^\varepsilon$ from \eqref{e:v0property}, we deduce that 
\begin{equation}
w \big|_{\left[0, \sfrac{\varepsilon^{-1} T}{3}\right]}\equiv v_1^\varepsilon \quad \text{ and } \quad w\big|_{\left[ \sfrac{ \varepsilon^{-1}2T}{3}, \varepsilon^{-1} T\right]}\equiv v_2^\varepsilon . \\
\end{equation}
Moreover, as proven in Section \ref{s:sizeofsingularset}, there exists a closed set $\mathcal{C}\subset \bigcap_{q \geq 0} \mathcal{B}_q \subset \left[0, \varepsilon^{-1}T\right] $ such that $w \in C^\infty\left(\T^3 \times \mathcal{C}^c\right)$ and 
\begin{equation*}
\dim_b(\mathcal{C}) \leq 1- \frac{ \gamma \beta''}{(b-1)(1-\beta'' + 3\alpha+ \gamma)}\,.
\end{equation*} 
We now come to the choice of the parameters $b, \gamma$ and $\alpha \,.$ It is easy to observe that the infimum of the above dimension bound is reached in the limit as $\alpha \downarrow 0$, $\gamma \uparrow \frac{ (b-1)(1-\beta''-2\beta'' b)}{b+1} $ and $b \downarrow 1\,.$ More precisely, we have
\begin{equation}\label{e:infdim}
\inf_{ b \in \left(1, \frac{1-\beta''}{2\beta''}\right)}  \, \left \{ \inf_{ \gamma \in \left(0, \frac{(b-1)(1-\beta''-2b \beta''))}{b+1}\right)} \left \{ \inf_{\alpha \in (0, \alpha_0)} \, \left \{  1- \frac{ \gamma \beta''}{(b-1)(1-\beta'' + 3\alpha+ \gamma)} \right \} \right \} \right \}= \frac{1}{2} + \frac{1}{2} \frac{2\beta''}{1-\beta''}\,.
\end{equation}
Since by choice of $\beta''>\beta'$, the right-hand side of \eqref{e:infdim} is strictly smaller than the desired dimension bound $\frac{1}{2}+ \frac{1}{2} \frac{2\beta'}{1-\beta'}\,,$ we can first choose $b$ sufficiently close to 1  (depending on $\beta''$), then $\gamma$ sufficiently close to $\frac{(1-b)(1-\beta''-2\beta''b)}{b+1}$ (depending on $\beta''$ and $b$) and finally $\alpha$ sufficiently close to $0$, such that 
\begin{equation*}
\dim_b(\mathcal{C}) \leq 1- \frac{ \gamma \beta''}{(b-1)(1-\beta'' + 3\alpha+ \gamma)}\leq \frac{1}{2}+ \frac{1}{2} \frac{2\beta'}{1-\beta'}\,.
\end{equation*}

Finally, we rescale back and set $v(x,t)= \varepsilon^{-1} w\left( x, \varepsilon^{-1} t\right) $ to obtain a weak solution $v \in C^\beta\left(\T^3 \times [0,T]\right)$ which is a gluing of $v_1$ and $v_2$ (in the sense of Theorem \ref{t:main}) and which is smooth in $\T^3 \times \mathcal{B}^c$, where 
\begin{equation}
\mathcal{B}= \left\{ \varepsilon^{-1} t: t \in \mathcal{C} \right\}.
\end{equation}
By scale-invariance, $\mathcal{B}$ obeys the same desired Hausdorff dimension bound as $\mathcal{C}\,.$

\qed

\subsection{Gluing and localization step}\label{s:glueing} As a first step in the proof of Proposition \ref{p:iterativeprop}, we construct from the couple $(v_q, R_q)$ and the set $\mathcal{B}_q$ a new couple $\left(\overline  v_q, \overline  R_q\right)$ solving \eqref{ER} as well as a set $\mathcal{B}_{q+1}$ satisfying (i)--(vii), with $q$ replaced by $q+1$. Whereas $\overline v_q$ will enjoy roughly the same estimates as $v_q$,  the new Reynolds stress $\overline R_q$ will already be localized (in time) in a subset of $\widehat{\mathcal{B}}_{q+1}$, that is in disjoint intervals of length $ \tau_{q+1}$. The price of this localization in time will be worsened estimates on $\overline R_q$ with respect to $R_q\,, $ proportional to shrinking rate \eqref{B_q:size}.

Following the construction of \cite{Is2018}, $\overline v_q$ will be a gluing of exact solutions of the Euler equations. In order to produce those solutions, we first mollify $v_q$ in space at length scale $\ell$, as it is typical in convex integration schemes for the Euler equations to avoid the loss of derivative problem.\footnote{In our context, this means more precisely that we are not able to prescribe estimates on a finite number of $N$ derivatives (independent on the step $q$) in the inductive scheme, nor estimates on derivatives of any order with constants independent on the step $q$.} To this end, let $\varphi$ be standard radial mollification kernel in space which we rescale with some parameter  $\ell_q$, that is $\varphi_{\ell_q}(x)= \ell_q^{-3} \varphi\left(\frac{x}{\ell_q}\right)\,.$ For any $q\geq 1$, we choose the mollification parameter to be
\begin{equation}\label{e:l}
\ell_{q}= \frac{\delta_{q+1}^{\sfrac{1}{2}}}{\delta_q^{\sfrac{1}{2}} \lambda_q^{1+\sfrac{\gamma}{2} +\sfrac{3\alpha}{2}}}\,.
\end{equation}
Observe that  in view of \eqref{gamma:bound1}, $\ell_{q}$ enjoys, for $\alpha$ small enough\footnote{The upper bound $\ell_q\leq \lambda_q^{-1}$ holds for every $\alpha>0$, while for the lower bound $\ell_q\geq \lambda_q^{-\sfrac{3}{2}}$ it suffices to require $\alpha < \beta$. }, the elementary bounds 
\begin{equation}\label{l:bound}
 \lambda_{q}^{-\sfrac{3}{2}}< \ell_{q} < \lambda_q^{-1} \,.
\end{equation}
In what follows, we will usually drop the subscript $q$ unless there is ambiguity about the step.

We  define the mollified functions
\begin{align*}
v_{\ell}&= v_{q} \ast \varphi_{\ell}, \\
R_{\ell}&= R_{q} \ast \varphi_{\ell} + (v_{q} \otimes v_{q}) \ast \varphi_{\ell}- v_{\ell} \otimes v_{\ell},\\
p_{\ell}&=p_{q} \ast \varphi_{\ell}.
\end{align*}
In view of \eqref{ER}, we get that $(v_{\ell}, R_{\ell})$ is a smooth solution to the Euler-Reynolds system  
\begin{equation*}
\begin{cases}
\partial_{t} v_{\ell} + \div( v_{\ell} \otimes v_{\ell}) + \nabla p_{\ell} = \div R_{\ell} \\
\div v_{\ell}=0 \,.
\end{cases}
\end{equation*}

The choice of $\ell$ guarantees that both contributions in $R_{\ell}$, the mollification of $R_{q}$ and the commutator, are of equal size. In particular, $R_{\ell}$ will be of the size of $R_{q}$. More precisely we have the following

\begin{proposition}\label{p:mollifaction}
For any $N\geq 0$ it holds
\begin{align}
\norm{v_{\ell}-v_{q}}_{0} &\lesssim \delta_{q}^{\sfrac{1}{2}} \lambda_{q} \ell, \label{e:vl-vqC0}  \\
\norm{v_{\ell}}_{N+1} &\lesssim \delta_{q}^{\sfrac{1}{2}} \lambda_{q} \ell^{-N}, \label{e:vlCN} \\
\norm{R_{\ell}}_{N+\alpha} &\lesssim  \delta_{q+1}\lambda_{q}^{-\gamma-3\alpha} \ell^{-N-\alpha}.\label{e:RlCN} 
\end{align}
\end{proposition}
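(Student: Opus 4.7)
The plan is to treat the three bounds separately, using standard mollification inequalities together with the Constantin--E--Titi commutator estimate, and then verifying that the specific choice of $\ell = \ell_q$ in \eqref{e:l} balances the two contributions in $R_\ell$.

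First, for \eqref{e:vl-vqC0} and \eqref{e:vlCN}, I would invoke the classical mollification identities. Since $\varphi$ has zero first moments outside the constant, Taylor expansion around $x$ yields $\|f\ast\varphi_\ell-f\|_0\lesssim \ell\,\|f\|_1$ and $\|f\ast\varphi_\ell\|_{N+1}\lesssim \ell^{-N}\|f\|_1$ for every $N\geq 0$. Applied to $f=v_q$ and combined with the inductive bound \eqref{v_q:C1_est}, these directly give
\[
\|v_\ell-v_q\|_0\lesssim \ell\,\|v_q\|_1\lesssim \delta_q^{\sfrac12}\lambda_q\ell,\qquad \|v_\ell\|_{N+1}\lesssim \ell^{-N}\|v_q\|_1\lesssim \delta_q^{\sfrac12}\lambda_q\ell^{-N}.
\]

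For the estimate on $R_\ell$, I would split it into the linear mollification term $R_q\ast\varphi_\ell$ and the quadratic commutator $T_\ell(v_q,v_q):=(v_q\otimes v_q)\ast\varphi_\ell-v_\ell\otimes v_\ell$. For the first piece, the standard estimate $\|g\ast\varphi_\ell\|_{N+\alpha}\lesssim \ell^{-N-\alpha}\|g\|_0$ together with the inductive hypothesis \eqref{R_q:C0_est} yields
\[
\|R_q\ast\varphi_\ell\|_{N+\alpha}\lesssim \ell^{-N-\alpha}\,\delta_{q+1}\lambda_q^{-\gamma-3\alpha},
\]
which is exactly the target bound. For the commutator, I would use the Constantin--E--Titi inequality $\|T_\ell(f,f)\|_{N+\alpha}\lesssim \ell^{2-N-\alpha}\|f\|_1^2$, which follows by writing the commutator as $\int(\delta_y f\otimes\delta_y f)(x)\varphi_\ell(y)\,dy$ minus a harmless square of averages, bounding the increments in $C^{N+\alpha}$ via derivative estimates of $\varphi_\ell$, and exploiting the cancellation $\delta_y f=y\cdot\nabla f+O(y^2\nabla^2 f)$. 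Applied to $f=v_q$ and combined with \eqref{v_q:C1_est}, this gives
\[
\|T_\ell(v_q,v_q)\|_{N+\alpha}\lesssim \ell^{2-N-\alpha}\,\delta_q\lambda_q^2.
\]

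The key verification is then that the choice of $\ell_q$ in \eqref{e:l} makes this second contribution match the first. From $\ell_q^2=\dfrac{\delta_{q+1}}{\delta_q\lambda_q^{2+\gamma+3\alpha}}$, one reads off
\[
\ell_q^{2-N-\alpha}\,\delta_q\lambda_q^2 = \ell_q^{-N-\alpha}\cdot\ell_q^2\,\delta_q\lambda_q^2 = \ell_q^{-N-\alpha}\,\delta_{q+1}\lambda_q^{-\gamma-3\alpha},
\]
so the two contributions to $R_\ell$ are of the same order and \eqref{e:RlCN} follows by the triangle inequality. I expect the commutator step to be the only non-completely-routine point, but it is the standard Constantin--E--Titi computation and requires no new ideas; the slightly delicate part is tracking the H\"older norm of order $N+\alpha$ rather than just $C^0$, which is handled by differentiating $\varphi_\ell$ inside the integral representation of the commutator and then estimating the $C^\alpha$ seminorm of a second-order increment of $v_q$ via its $C^1$ norm.
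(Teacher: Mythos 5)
Your proof is correct and follows essentially the same route as the paper's: standard mollification estimates for the first two bounds, the Constantin--E--Titi commutator estimate (Proposition~\ref{p:CET}) for the nonlinear part of $R_\ell$, and the exact identity $\ell_q^2\,\delta_q\lambda_q^2=\delta_{q+1}\lambda_q^{-\gamma-3\alpha}$ from the choice \eqref{e:l} to balance the two contributions. The only difference is that you sketch a derivation of the CET estimate instead of citing it directly, which is fine but superfluous.
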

Here (and in what follows) the symbol $\lesssim$ means that the constant in the inequality may depend on the number of derivatives $N$, but not on any of the parameters of Proposition \ref{p:iterativeprop}, neither on the step $q$.
\begin{proof} The estimates \eqref{e:vl-vqC0} and \eqref{e:vlCN} follow from standard mollification estimates and \eqref{v_q:C1_est}. Indeed, we have
\begin{align*}
\norm{v_{\ell}-v_{q}}_{0} &\lesssim \ell \norm{v_{q}}_{1} \lesssim \delta_{q}^{\sfrac{1}{2}} \lambda_{q} \ell \,,\\
\norm{ v_{\ell}}_{N+1} &\lesssim \ell^{-N} \norm{v_{q}}_{1}\lesssim \delta_{q}^{\sfrac{1}{2}} \lambda_{q} \ell^{-N} \quad \forall N \geq 0 \,.
\end{align*}
Finally, using Proposition \ref{p:CET}, \eqref{R_q:C0_est}--\eqref{v_q:C1_est} and the choice of $\ell$ in \eqref{e:l} we get
\begin{align*}
\norm{R_{\ell}}_{N+\alpha} \lesssim  \ell^{-N-\alpha} \norm{R_{q}}_{0} + \ell^{2-N-\alpha} \norm{v_{q}}_{1}^2 
\lesssim  \ell^{-N-\alpha} \left(\delta_{q+1} \lambda_{q}^{-\gamma-3\alpha} + \ell^{2} \delta_{q} \lambda_{q}^2 \right)
\lesssim \delta_{q+1}\lambda_{q}^{-\gamma-3\alpha} \ell^{-N-\alpha} \,.
\end{align*}
\end{proof}

To localize the glued Reynolds stress in time, we use the strategy of \cite{BCV19}. By the inductive hypothesis \ref{Bq:vi}, the real bad set $\widehat{\mathcal{B}}_q$, where the Reynolds stress $R_q$ is supported, is a finite union of disjoint intervals of length $3 \tau_q$. We will split each of this intervals in subintervals $[t_i, t_{i+1}]$ of length $t_{i+1}-t_i=\theta_{q+1}$ and we will build smooth solutions $v_i$ of the Euler system with initial datum $v_i(t_i)=v_\ell(t_i)\,.$ By the choice of $\theta_{q+1}$, we have for large enough $a$ that 
\begin{equation}\label{e:existencetime}
2\theta_{q+1} \norm{v_\ell(t_i)}_{1+\alpha} \ll 1,
\end{equation}
which guarantees that $v_i$ will exist for times $\lvert t-t_i \rvert \leq 2\theta_{q+1}$ (see Proposition \ref{p:local:Euler}). This will allow us to define $\overline v_q$ as the following gluing of smooth solutions 
\begin{equation}
\overline v_q := 	 \sum_i \eta_{g}^i v_i + (1-\eta_g) v_q\,,
\end{equation}
where $\eta_g=\sum_i \eta_{g}^i$ is a smooth temporal cutoff between  $\widehat{\mathcal{B}}_q$ and $\mathcal{B}_q$. The cutoffs $\eta_{g}^i$ will be supported in an interval of length $<2\theta_{q+1}$ around $t_i$ and will be steep: $\partial_t \eta_{g}^i$ will be supported in two tiny (compared to their support) intervals $I_{i}$ and $I_{i+1}$ of length $\tau_{q+1}$ (see Section \ref{s:glueingandloc}). By construction, $\overline v_q$ will solve an Euler Reynolds system with a Reynolds stress $ \overline R_q$ which is localized where $\partial_t \eta_{g}$ is non-zero, that is in $\bigcup_i I_i\,.$ Up to enlarging every $I_i$ in length by $2\tau_{q+1}$ on either side, those intervals will form the new bad set $\mathcal{B}_{q+1}\,.$ 

\begin{proposition} \label{p:glueing} Given a couple $(v_q, R_q)$ solving \eqref{ER} on $\T^3 \times [0, T]$ together with a set $\mathcal B_q \subset [0, T]$ satisfying the hypothesis of Proposition \ref{p:iterativeprop},  there exists a smooth solution $\left(\overline v_{q}, \overline R_{q}\right)$ to \eqref{ER} on $\T^3 \times [0,T]$ and an open set $\mathcal{B}_{q+1} \subset [0,T]$ satisfying the properties {\normalfont \ref{Bq:i}--\ref{Bq:iv}} listed in Section \ref{s:inductive_prop} with $q$ replaced by $q+1$, such that additionally
\begin{align}\label{e:pglueing1}
\overline v_{q}(t)&= v_{q}(t) &&\, \text{ for all } t \in \mathcal{G}_{q}, \\
\overline R_{q}(t) &=0 &&\, \text{ for all } t \in [0,T] \text{ such that } \dist\left(t, \mathcal{G}_{q+1}\right) \leq 2 \tau_{q+1},. \label{e:pglueing2}
\end{align}
Moreover, we have the estimates
\begin{align}
\norm{\overline v_{q}- v_{q}}_{0} &\lesssim \delta_{q+1}^{\sfrac{1}{2}}  \lambda_q^{-\sfrac{\gamma}{2}- \sfrac{3\alpha}{2}}, \label{e:pglueing4} \\
\norm{\overline v_{q}- v_\ell}_{N+1} &\lesssim \delta_q^{\sfrac{1}{2}} \lambda_q \ell^{-N} \quad  &&\forall N \geq 0 \,, \label{e:pglueing5} \\
\norm{\overline v_{q}}_{N+1} &\lesssim \delta_{q}^{\sfrac{1}{2}} \lambda_{q} \ell^{-N} \quad  &&\forall N \geq 0\,,  \label{e:pglueing6}\\
\norm{\overline R_{q}}_{N+\alpha} &\lesssim \delta_{q+1}\ell^{-N+\alpha}  \quad  &&\forall N \geq 0\,, \label{e:pglueing7}\\
\norm{(\partial_{t} + \overline v_{q} \cdot \nabla) \overline R_{q}}_{N+\alpha} &\lesssim  \delta_{q+1} \delta_{q}^{\sfrac12}\lambda_q^{1+\gamma} \ell^{-N-2\alpha}  \label{e:pglueing8} \quad &&\forall N \geq 0 \,.
\end{align}
\end{proposition}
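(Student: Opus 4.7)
The plan is to follow the classical gluing of exact Euler flows as in \cite{Is2018}, combined with the localization-in-time machinery from \cite{BCV19}. First, mollify at length scale $\ell=\ell_q$ to obtain $(v_\ell,R_\ell)$ satisfying Proposition \ref{p:mollifaction}. By the inductive hypothesis \ref{Bq:vi}, the Reynolds stress is already supported in $\widehat{\mathcal{B}}_q$, which is a finite union of disjoint intervals. Subdivide each such interval into equal pieces of length $\theta_{q+1}$ with endpoints $\{t_i\}$, and on each subinterval let $v_i$ be the smooth solution of the incompressible Euler equations with initial datum $v_\ell(t_i)$ at time $t_i$. The smallness assumption \eqref{e:existencetime} guarantees, via Proposition \ref{p:local:Euler}, that $v_i$ exists on $[t_i-2\theta_{q+1},t_i+2\theta_{q+1}]$. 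Extend the $v_i$ trivially outside $\widehat{\mathcal{B}}_q$ by setting them equal to $v_\ell$ there.

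Next, construct a partition of unity in time: pick smooth cutoffs $\eta_g^i:[0,T]\to[0,1]$ supported in overlapping intervals of length slightly less than $2\theta_{q+1}$ centered at $t_i$, summing to $\eta_g$, which is $1$ on $\widehat{\mathcal{B}}_q$ and $0$ on $\mathcal{G}_q$, and arranged so that $\partial_t\eta_g^i$ is supported in two tiny intervals $I_i,I_{i+1}$ of length $\tau_{q+1}$, on which $|\partial_t\eta_g^i|\lesssim \tau_{q+1}^{-1}$. Define
\[
\overline v_q := \sum_i \eta_g^i v_i + (1-\eta_g)\, v_q .
\]
On $\mathcal{G}_q$ this is $v_q$ (giving \eqref{e:pglueing1}); on the support of a single $\eta_g^i\equiv1$ it is the exact Euler flow $v_i$. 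The Reynolds stress $\overline R_q$ is nonzero only where two consecutive cutoffs are both non-trivial (equivalently, where $\partial_t\eta_g$ is nonzero), i.e.\ inside $\bigcup_i I_i$. Declare $\mathcal{B}_{q+1}$ to be the union of the $2\tau_{q+1}$-neighborhoods of the $I_i$; this immediately yields \eqref{e:pglueing2}, the length bound $5\tau_{q+1}$ per component in \ref{Bq:iii}, and property \ref{Bq:v}. The counting of intervals inherited from \ref{Bq:iii} at step $q$ yields \ref{Bq:iv}: the factor $10\tau_{q+1}/\theta_{q+1}$ comes from the ratio (number of subintervals of length $\theta_{q+1}$ fitting into $\widehat{\mathcal{B}}_q$) times (new length $5\tau_{q+1}$ per bad component) divided by the old length $3\tau_q$.

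To compute $\overline R_q$, plug $\overline v_q$ into the Euler equation. Since each $v_i$ is an exact solution and $v_q$ solves Euler-Reynolds with stress $R_q$ (which vanishes on $\widehat{\mathcal{B}}_q^c$ by \ref{Bq:vi}, hence on the support of $1-\eta_g$), the computation gives schematically
\[
\overline R_q = \mathcal{R}\!\Bigl(\partial_t\eta_g \sum_i (v_i - v_{i+1})\Bigr)
- \sum_i \eta_g^i(1-\eta_g^i)(v_i-\overline v_q)\otimes(v_i-\overline v_q) + (\text{symmetric terms}),
\]
modulo a pressure gradient. The essential estimate, proven via the transport equation for $v_i-v_\ell$ and Grönwall-type bounds together with the rate $\theta_{q+1}\|v_\ell\|_{1+\alpha}\lesssim \delta_{q+1}^{1/2}/(\delta_q^{1/2}\lambda_q)$, is
\[
\|v_i - v_\ell\|_{N+\alpha} \lesssim \theta_{q+1}\,\delta_q\lambda_q^2\,\ell^{-N-\alpha} \lesssim \delta_{q+1}\lambda_q^{-\gamma-3\alpha}\ell^{-N-\alpha},
\]
from which $\|v_i-v_j\|_{N+\alpha}$ satisfies the same bound on overlaps. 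This directly gives \eqref{e:pglueing4}--\eqref{e:pglueing6} (for \eqref{e:pglueing5}--\eqref{e:pglueing6} use also \eqref{e:vlCN} and the triangle inequality), while \eqref{e:pglueing7} follows from applying the standard Schauder bound for $\mathcal{R}$ together with the factor $\tau_{q+1}^{-1}$ from $\partial_t\eta_g$: the critical identity is $\tau_{q+1}^{-1}\cdot\delta_{q+1}\lambda_q^{-\gamma-3\alpha}=\delta_{q+1}\delta_q^{1/2}\lambda_q^{1+3\alpha}\cdot\lambda_q^{-3\alpha}\sim \delta_{q+1}\ell^{\alpha}\cdot\ell^{-\alpha}$ after accounting for the Schauder gain.

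The main obstacle is \eqref{e:pglueing8}: one needs the advective derivative $(\partial_t+\overline v_q\cdot\nabla)\overline R_q$ to be much smaller than a naive $\tau_{q+1}^{-1}\|\overline R_q\|$. The gain is obtained because when the material derivative hits $v_i-v_j$, one recognizes that each $v_i$ satisfies the Euler equation, so $D_t(v_i-v_j)=(v_j-v_i)\cdot\nabla v_j + \nabla(p_j-p_i)$ which is again quadratically small in the difference, and similarly for the commutator stress term. This produces an extra factor $\delta_q^{1/2}\lambda_q^{1+\gamma}$ (rather than $\tau_{q+1}^{-1}$), matching the right-hand side of \eqref{e:pglueing8}. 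All remaining claims (smoothness of $\overline v_q$ and $\overline R_q$, divergence-freeness, and that the set $\mathcal{B}_{q+1}$ satisfies \ref{Bq:i}--\ref{Bq:iv}) follow from the explicit construction.
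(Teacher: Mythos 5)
Your overall strategy (mollify, subdivide $\widehat{\mathcal{B}}_q$, glue exact Euler flows $v_i$ via steep cutoffs $\eta_g^i$, define $\mathcal{B}_{q+1}$ as the $2\tau_{q+1}$-neighborhoods of $\bigcup_i I_i$) matches the paper's. However, there are two concrete gaps in your argument that prevent it from closing.

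First, your proof of \eqref{e:pglueing7} relies on a ``Schauder gain'' for $\mathcal{R}$ that you do not substantiate, and your stated ``critical identity'' is off: one has $\tau_{q+1}^{-1}\delta_{q+1}\lambda_q^{-\gamma-3\alpha} = \delta_{q+1}\delta_q^{1/2}\lambda_q$, which is \emph{not} of size $\delta_{q+1}$; it is larger by a factor $\delta_q^{1/2}\lambda_q = \lambda_q^{1-\beta} \gg 1$. Schauder boundedness of $\mathcal{R}$ gives $\|\mathcal{R}f\|_{N+\alpha}\lesssim \|f\|_{N-1+\alpha}$ only for $N\geq 1$; the $N=0$ case cannot be handled by Schauder alone, since there is no ``free'' derivative to absorb. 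The paper instead passes to the vector potentials $z_i = (-\Delta)^{-1}\curl v_i$ and writes $\mathcal{R}(v_i-v_{i-1}) = \mathcal{R}\curl(z_i-z_{i-1})$, where $\mathcal{R}\curl$ is a bounded Calder\'on--Zygmund operator. The bound $\|z_i-z_\ell\|_{N+\alpha}\lesssim \tau_{q+1}\delta_{q+1}\ell^{-N+\alpha}$, which is one power of $\ell$ better than $\|v_i-v_\ell\|_{N+\alpha}$, comes from a separate Gr\"onwall argument on the vorticity equation (Propositions \ref{p:vectorpotentials} and \ref{p:vectorpotentials2}); without it, the term $\tau_{q+1}^{-1}\|\mathcal{R}(v_i-v_{i-1})\|_\alpha$ is too large by a factor $\ell^{-1}$ to yield \eqref{e:pglueing7}. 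Your stated stability estimate $\|v_i-v_\ell\|_{N+\alpha}\lesssim\theta_{q+1}\delta_q\lambda_q^2\ell^{-N-\alpha}$ is also numerically wrong; the correct Gr\"onwall argument integrates $\|\div R_\ell\|$ (not $\|\nabla v_\ell\|^2$) and gives $\tau_{q+1}\delta_{q+1}\ell^{-N-1+\alpha}$.

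Second, your construction glues $v_0$ and $v_n$ directly to $v_q$ (not $v_\ell$) on the boundary intervals $I_0, I_{n+1}$ of $\widehat{\mathcal{B}}_q$: this is forced because \eqref{e:pglueing1} and ultimately property \ref{Bq:v} require $\overline v_q = v_q$ on $\mathcal{G}_q$. But then the estimates on $\overline R_q$ in $I_0,I_{n+1}$ involve $v_q-v_\ell$ rather than $v_i-v_\ell$, and no Gr\"onwall stability argument is available since $v_q(t_0)\neq v_\ell(t_0)$. The mollification estimate from Proposition \ref{p:mollifaction} alone gives only $\|v_q-v_\ell\|_0\lesssim \delta_q^{1/2}\lambda_q\ell$, which is too large. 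The paper resolves this by crucially invoking the inductive hypothesis \ref{Bq:vii} on $\widehat{\mathcal{B}}_q^c$, where $v_q$ is a smooth Euler solution with the better bounds $\|v_q\|_{N+1}\lesssim \delta_{q-1}^{1/2}\lambda_{q-1}\ell_{q-1}^{-N}$, yielding the improved estimates of Propositions \ref{p:betterestvq-vl} and \ref{p:vectorpotentials2}. Your proposal never uses \ref{Bq:vii}, so the stress estimates on $I_0$ and $I_{n+1}$ would fail.
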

Observe that we are not explicitly requiring the properties \ref{Bq:v}--\ref{Bq:vii} on the new bad set $\mathcal{B}_{q+1}$; the latter will be however an easy consequence of the stronger properties \eqref{e:pglueing1}, \eqref{e:pglueing2} and \eqref{e:pglueing6}.

\subsection{Perturbation step}\label{s:convexintegration} Although the gluing step allows to localize the Reynolds stress $\overline R_q$ already in much smaller intervals of time, we did not improve the size of the Reynolds stress yet. In fact, the estimates have been even worsened by the factor $\lambda_q^\gamma$, which can be view as the main reason why the Hausdorff dimension achieved in Theorem \ref{t:main} is strictly bigger than the optimal one given in Conjecture \ref{conject}.  The precise discussion of this issue is postponed to Section \ref{sec_gap} below.

In order reduce the size of the Reynolds stress, we will produce from $(\overline v_q, \overline R_q)$ and $\mathcal{B}_{q+1}$, given by Proposition \ref{p:glueing}, a new solution $(v_{q+1}, R_{q+1})$ to \eqref{ER} with the Reynolds stress $R_{q+1}$ still supported in $\widehat{\mathcal{B}}_{q+1}$ and verifying all the desired estimates. This will be done by adding a highly oscillatory perturbation $w_{q+1}$ to $\overline v_q$. Indeed, this is the key ingredient of all convex integration schemes building on \cite{DS2013} and,  as in \cite{Is2018, BDLSV2019}, the building blocks for the perturbation $w_{q+1}$ are suitably chosen, stationary solutions to the Euler equations, the so called Mikado flows. In the presentation of the perturbation step, we will follow closely \cite{BDLSV2019}.

At difference from \cite{BDLSV2019}, we will need to localize the perturbation $w_{q+1}$ in time to have support within $\widehat{\mathcal{B}}_{q+1}$. This will be achieved by means of steep temporal cutoffs, similar to the ones from the gluing step,  and will be responsible for worsened estimates on  $R_{q+1}$ with respect to \cite{BDLSV2019}.

\begin{proposition}\label{p:perturbation}
Let $(\overline v_q,\overline R_q)$ and the bad set $\mathcal{B}_{q+1}\subset [0,T]$ be as in Proposition \ref{p:glueing}. There exists a new smooth couple $(v_{q+1}, R_{q+1})$ which solves \eqref{ER} in $\T^3\times[0,T]$, and such that all the properties {\normalfont \ref{Bq:i}--\ref{Bq:vii}} listed in Section \ref{s:inductive_prop} hold with $q$ replaced by $q+1$. Moreover, we have the estimates
\begin{align}
\norm{ v_{q+1}- \overline v_{q}}_{0} + \lambda_{q+1}^{-1} \norm{  v_{q+1}- \overline v_{q}}_{1} &\leq \frac{M}{2} \delta_{q+1}^{\sfrac{1}{2}}, \label{v_q+1-bar v_q} \\
\norm{ R_{q+1}}_{0} &\lesssim\frac{ \delta^{\sfrac{1}{2}}_{q+1}\delta_{q}^{\sfrac12}\lambda_q^{1+\gamma}}{\lambda_{q+1}^{1-5\alpha}}\label{R_q+1},
\end{align}
where $M>0$ is a universal geometric constant.
\end{proposition}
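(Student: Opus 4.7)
The construction follows the Mikado-flow convex integration scheme of \cite{BDLSV2019}, adapted with steep temporal cutoffs in the spirit of \cite{BCV19} in order to confine the perturbation inside $\widehat{\mathcal{B}}_{q+1}$. First I would introduce a smooth master cutoff $\eta\colon [0,T]\to [0,1]$ equal to $1$ on $\widehat{\mathcal{B}}_{q+1}$, vanishing outside a $\tau_{q+1}$-neighborhood of it, with $\lvert \partial_t^k\eta\rvert\lesssim \tau_{q+1}^{-k}$. Since by \eqref{e:pglueing2} the stress $\overline R_q$ already vanishes on $\{t:\dist(t,\mathcal{G}_{q+1})\leq 2\tau_{q+1}\}$, multiplying the perturbation by $\eta$ will force $w_{q+1}$ and hence $R_{q+1}$ to vanish on $\widehat{\mathcal{B}}_{q+1}^c$, yielding \ref{Bq:vi} at step $q+1$; combined with \eqref{e:pglueing1} and the better estimate \eqref{e:pglueing6}, this will also give \ref{Bq:v} and \ref{Bq:vii}.

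For the principal perturbation, I would partition the support of $\eta$ by a smooth partition of unity $\{\chi_i(t)\}$ on a grid of spacing $\theta_{q+1}$, with $\lvert \partial_t^k \chi_i\rvert\lesssim \theta_{q+1}^{-k}$, and on each $\mathrm{supp}\,\chi_i$ use the backward flow $\Phi_i(x,t)$ of $\overline v_q$ issued at $t=t_i$. The bound $\theta_{q+1}\|\overline v_q\|_{1+\alpha}\ll 1$ coming from \eqref{e:existencetime} and \eqref{e:pglueing6} guarantees that $\nabla\Phi_i$ stays close to the identity. Since \eqref{e:pglueing7} implies that $\overline R_q/\delta_{q+1}$ is small in $C^0$, the geometric lemma of \cite{BDLSV2019} gives a decomposition
$$
\delta_{q+1}\,\mathrm{Id}-\overline R_q \;=\; \sum_{\xi} a_{\xi,i}^2\,\xi\otimes\xi,\qquad \|a_{\xi,i}\|_0\lesssim \delta_{q+1}^{\sfrac12},
$$
on $\mathrm{supp}\,\chi_i$. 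The principal perturbation is then
$$
w_{q+1}^{(p)}(x,t) \;=\; \eta(t)\sum_{i,\xi}\chi_i(t)\,a_{\xi,i}(x,t)\,(\nabla\Phi_i)^{-1}(x,t)\,W_\xi\!\big(\lambda_{q+1}\Phi_i(x,t)\big),
$$
with $W_\xi$ the Mikado flows. I would add an incompressibility corrector $w_{q+1}^{(c)}$ and a temporal corrector $w_{q+1}^{(t)}$ exactly as in \cite{BDLSV2019}, so that $w_{q+1}=w_{q+1}^{(p)}+w_{q+1}^{(c)}+w_{q+1}^{(t)}$ is divergence-free and the highest-frequency part of $\partial_t w_{q+1}^{(p)}+\overline v_q\cdot\nabla w_{q+1}^{(p)}$ is absorbed. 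The velocity estimate \eqref{v_q+1-bar v_q} then follows from standard stationary-phase bounds together with $\|a_{\xi,i}\|_0\lesssim \delta_{q+1}^{\sfrac12}$, the correctors being of strictly smaller order.

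Writing the Euler--Reynolds equation for $v_{q+1}=\overline v_q+w_{q+1}$ and applying the inverse divergence $\mathcal R$ of \eqref{e:R:def} to each term, one obtains a decomposition $R_{q+1}=R_{\mathrm{lin}}+R_{\mathrm{osc}}+R_{\mathrm{cor}}+R_{\mathrm{cut}}$, consisting respectively of a linear (transport plus Nash) part, the high-frequency nonlinear error, the errors produced by the two correctors, and the terms generated by $\partial_t\eta$ and $\partial_t\chi_i$. Each of $R_{\mathrm{lin}}$, $R_{\mathrm{osc}}$ and $R_{\mathrm{cor}}$ is estimated via the stationary-phase lemma in the phase $\lambda_{q+1}\Phi_i$: one integration by parts trades a derivative for a factor $\lambda_{q+1}^{-1}\ell^{-\alpha}$ (the $\ell^{-\alpha}$ losses being absorbed into the five $\alpha$'s at the end). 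The dominant contribution is the transport derivative of the amplitudes, which through the chain rule and the dependence $a_{\xi,i}\sim\sqrt{\delta_{q+1}\mathrm{Id}-\overline R_q}$ inherits the bound \eqref{e:pglueing8}; this is exactly what produces the factor $\delta_q^{\sfrac12}\lambda_q^{1+\gamma}$ in \eqref{R_q+1}. The cutoff contribution $R_{\mathrm{cut}}$ is controlled using $\lvert\partial_t\eta\rvert\lesssim \tau_{q+1}^{-1}=\delta_q^{\sfrac12}\lambda_q^{1+\gamma+3\alpha}$, which is of the same order and therefore absorbable.

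The main obstacle is precisely this last point: the time localization forces us to use the sharper transport estimate \eqref{e:pglueing8}, whose extra factor $\lambda_q^\gamma$ over the analogous bound in \cite{BDLSV2019} enters \eqref{R_q+1} and, after iteration, into the final Hausdorff dimension. Verifying that \eqref{R_q+1} closes the induction, i.e.\ is compatible with \eqref{R_q:C0_est} at step $q+1$, reduces to the parameter inequality $\delta_{q+1}^{\sfrac12}\delta_q^{\sfrac12}\lambda_q^{1+\gamma}\lambda_{q+1}^{-1+5\alpha}\leq \delta_{q+2}\lambda_{q+1}^{-\gamma-3\alpha}$, which upon substitution becomes exactly the bounds $b<\frac{1-\beta}{2\beta}$ and \eqref{gamma:bound2} assumed in Proposition~\ref{p:iterativeprop}, provided $\alpha$ is chosen sufficiently small and $a$ sufficiently large.
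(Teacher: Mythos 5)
Your proposal captures the core ideas of the paper's construction — stationary Mikado flows in Lagrangian coordinates $\Phi_i$, steep temporal cutoffs with $\lvert\partial_t\eta\rvert\lesssim\tau_{q+1}^{-1}$, the observation that both the transport error (through \eqref{e:pglueing8}) and the cutoff error (through $\tau_{q+1}^{-1}=\delta_q^{\sfrac12}\lambda_q^{1+\gamma+3\alpha}$) contribute the extra $\lambda_q^{\gamma}$, and the closing parameter inequality leading to \eqref{gamma:bound2}. This is essentially the same route as the paper, so your plan is viable, but a few details are misremembered or unnecessarily imported from other schemes.

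First, there is no temporal corrector in the H\"older/Mikado scheme of \cite{BDLSV2019} or here: since the Mikado flows are stationary, the decomposition is simply $w_{q+1}=w_o+w_c$, with $w_c$ the incompressibility corrector obtained from the potential form \eqref{e:Mikado_Potential}. The temporal corrector you invoke belongs to the intermittent schemes of \cite{BV2017,BCV19}, where the building blocks oscillate in time; including it here is superfluous and conflates two distinct lines of work.

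Second, the paper does not combine a master cutoff $\eta$ with a $\theta_{q+1}$-spaced partition of unity $\{\chi_i\}$. Because $\overline R_q$ is already localized in disjoint intervals $I_i$ of length $\tau_{q+1}$ separated by gaps $\theta_{q+1}\gg5\tau_{q+1}$, the natural choice is a single family of \emph{disjointly supported} cutoffs $\eta_p^i$ (Lemma \ref{l:cutoff_pert}), each $\equiv 1$ on $I_i$ and supported in a $\tau_{q+1}$-neighborhood, with $\lvert\partial_t\eta_p^i\rvert\lesssim\tau_{q+1}^{-1}$. This avoids cross terms between different flows $\Phi_i$ in $w_o\otimes w_o$ entirely (since $\eta_p^i\eta_p^j\equiv0$ for $i\neq j$) and makes $\sum_i(\eta_p^i)^2\equiv1$ on $\supp\overline R_q$ automatic, which is all that is needed to cancel $\overline R_q$ in the oscillation error. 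With your $\theta_{q+1}$-grid partition, you would either have to align the $\chi_i$ with the $I_i$ (in which case you are just reinventing $\eta_p^i$) or handle cross terms between adjacent flows, which the paper deliberately sidesteps.

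Third, the flows $\Phi_i$ in the paper are issued at the midpoints $s_i$ of the $I_i$, not on a $\theta_{q+1}$-grid. Since $\supp\eta_p^i$ lies within $\sim\tau_{q+1}$ of $s_i$, one gets the quantitatively sharper estimate $\|\nabla\Phi_i-\Id\|_0\lesssim\tau_{q+1}\delta_q^{\sfrac12}\lambda_q\leq\lambda_q^{-\gamma}$ (see \eqref{gradphi_close_id}), not just $O(1)$; this is used in Lemma \ref{l:Rqi_identity} to ensure $\tilde R_{q,i}\in B_{1/2}(\Id)$. Finally, the paper does not invoke the rank-one geometric lemma; instead it works directly with the $R$-dependent Mikado flow $W(R,\xi)$ and sets $\tilde R_{q,i}=\delta_{q+1}^{-1}\nabla\Phi_i(\delta_{q+1}\Id-\overline R_q)\nabla\Phi_i^T$. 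Your geometric-lemma packaging is mathematically equivalent, but you should make sure that whichever you choose matches the Mikado construction you cite.
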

The proof of the previous proposition is the core of the convex integration scheme and will occupy most of this work. Being quite technical, we postpone it to Section \ref{s:perturbation}.

\subsection{Proof of Proposition \ref{p:iterativeprop}} We prove how the main iterative Proposition \ref{p:iterativeprop} is a consequence of the two previous steps and we postpone their respective proofs in Sections \ref{s:glueingandloc} and \ref{s:perturbation} below. 

We start by noticing that given a couple $(v_q, R_q)$ solving \eqref{ER} on $\T^3 \times [0, T]$ together with a set $\mathcal B_q \subset [0, T]$ satisfying the hypothesis of Proposition \ref{p:iterativeprop}, Proposition \ref{p:perturbation} directly gives the smooth couple $(v_{q+1},R_{q+1})$ solving \eqref{ER} on $\T^3 \times [0, T]$, together with the bad set $\mathcal{B}_{q+1}$ (and consequently its complement $\mathcal{G}_{q+1}$), satisfying properties \ref{Bq:i}--\ref{Bq:vii}. Thus we are left to check \eqref{v_q+1-v_q} and that estimates \eqref{R_q:C0_est}--\eqref{vq:C0_est} hold with $q$ replaced by $q+1$.

Estimate \eqref{v_q+1-v_q} is a consequence of \eqref{e:pglueing4}, \eqref{e:pglueing6}, \eqref{v_q+1-bar v_q} and the inductive assumption \eqref{v_q:C1_est} on $v_q$. Indeed, we have
\begin{equation}\label{ineq_1}
\|v_{q+1}-v_q\|_0\leq \|v_{q+1}-\overline v_q\|_0+\| \overline v_q-v_q\|_0\leq\delta_{q+1}^{\sfrac12} \left(\frac{M}{2}+C \lambda_q^{-\sfrac{\gamma}{2}}\right)\leq M\delta_{q+1}^{\sfrac12},
\end{equation}
where the last inequality holds if $a$ is  large enough. Similarly, we get $a$ large enough (independently of $q$) that
\begin{equation}\label{ineq_2}
\|v_{q+1}-v_q\|_1\leq \|v_{q+1}-\overline v_q\|_1+\| \overline v_q-v_q\|_1\leq\delta_{q+1}^{\sfrac12}\lambda_{q+1} \left(\frac{M}{2}+C \frac{\delta_q^{\sfrac{1}{2}}\lambda_q}{\delta_{q+1}^{\sfrac{1}{2}}\lambda_{q+1}}\right)\leq \frac23 M\delta_{q+1}^{\sfrac12}\lambda_{q+1},
\end{equation}
which together with \eqref{ineq_1}, proves \eqref{v_q+1-v_q}.

By using \eqref{v_q:C1_est} and \eqref{ineq_2}, we obtain 
$$
\|v_{q+1}\|_{1}\leq \|v_{q+1}-v_q\|_1+\|v_q\|_1\leq  \frac23 M\delta_{q+1}^{\sfrac12}\lambda_{q+1}+M\delta_q^{\sfrac12}\lambda_q\leq M\delta_{q+1}^{\sfrac12}\lambda_{q+1},
$$
which ensures the validity of \eqref{v_q:C1_est} at step $q+1$. Moreover, 
$$
\|v_{q+1}\|_0\leq \|v_{q+1}-v_q\|_0+\|v_q\|_0 \leq M \delta_{q+1}^{\sfrac12}+1-\delta_q^{\sfrac12} \leq 1-\delta_{q+1}^{\sfrac12},
$$
where again we assumed that $a$ is large enough in order to guarantee the last inequality. Thus also \eqref{vq:C0_est} holds at step $q+1$. Finally, the proof of the last estimate \eqref{R_q:C0_est} is a consequence of the following relation
\begin{equation}\label{param_ineq}
\frac{\delta_{q+1}^{\sfrac12}\delta_q^{\sfrac12}\lambda_q^{1+\gamma}}{\lambda_{q+1}^{1-5\alpha}}\leq \delta_{q+2}\lambda_{q+1}^{-\gamma-3\alpha}.
\end{equation}
By using the parameters definitions, it is clear that \eqref{param_ineq} holds if
\begin{equation}\label{param_ineq_2}
-\beta b - \beta +1+\gamma-b+5\alpha b<-2\beta b^2-\gamma b -3\alpha b.
\end{equation}
We notice that if the previous inequality holds for $\alpha=0$, then (being an inequality between polynomials) there will be an $\alpha_0=\alpha_0(\gamma,\beta,b)>0$ such that \eqref{param_ineq_2} still holds for all $0<\alpha<\alpha_0$. But if we set $\alpha=0$, we obtain 
$$
(b+1)\gamma<-2\beta b^2+(\beta+1)b+\beta-1=(b-1)(-2\beta b +1-\beta),
$$
which holds by our choice of $\gamma$ in \eqref{gamma:bound2}. This concludes the proof of Proposition \ref{p:iterativeprop}.

\subsection{Gap with the conjectured exponent}\label{sec_gap}By our inductive assumption \ref{Bq:iv} on the shrinking rate of the bad set $\mathcal{B}_q$, it is clear that bigger is $\gamma$, the smaller is the dimension of the final singular set. By looking at \eqref{dim_est_B}, one may verify that the sharp Hausdorff dimension of Conjecture \ref{conject} would be achieved if $\gamma$ could reach the threshold $\overline \gamma (b,\beta)\simeq (b-1)(1-\beta-2\beta b)$. More precisely, we would need that this upper bound $\overline \gamma$ satisfies 
\begin{equation}\label{gamma_sharp}
\lim_{b\rightarrow 1}\frac{\overline{\gamma}(b,\beta)}{b-1}=1-3\beta.
\end{equation}
In our case however, the restriction \eqref{gamma:bound2} on $\gamma$ implies that the maximal $\gamma_{\text{max}}(b,\beta)$ we can choose is only half of the sharp one from \eqref{gamma_sharp}, or in other words, our upper bound on $\gamma$ satisfies
$$
\lim_{b\rightarrow 1}\frac{\gamma_{\text{max}}(b,\beta)}{b-1}=\frac{1-3\beta}{2}.
$$
With that being said, we will now try to explain where the restriction \eqref{gamma:bound2} comes from. To do that, we give an heuristic version of the inductive scheme.

Given the two parameters $\delta_q$ and $\lambda_q$ as in Section \ref{s:inductive_prop}, the aim is to find a perturbation $w_{q+1}$ of size $\delta_{q+1}$, oscillating at frequency $\lambda_{q+1}\,,$ that verifies \eqref{v_q+1-v_q}, together with a new error $R_{q+1}$ that is localized in intervals of length $\sim \tau_{q+1}$. By looking at the oscillation error in \eqref{d:R_q+1}, we deduce that $\|w_{q+1}\|_0\simeq \norm{\overline R_q}^{\sfrac12}_0$, since without that, it would be impossible to ensure that $R_{q+1}$ is considerably smaller than $R_q$. This implies that 
\begin{equation}\label{size_glued_R}
\norm{\overline R_q}_0\simeq\delta_{q+1}.
\end{equation}
The stress tensor $\overline R_q$ is obtained by using the gluing technique introduced by Philip Isett in \cite{Is2018} and consequently, the corresponding glued velocity $\overline v_q$ has to be an exact solution of Euler in intervals of length $\theta_{q+1}\simeq (\delta_q^{\sfrac{1}{2}} \lambda_q)^{-1}\,.$  The only difference is that we need to shrink the temporal support of $\overline R_q$ to intervals of length $\sim\tau_{q+1}=\lambda_q^{-\gamma} \theta_{q+1} \ll \theta_{q+1}$ (see Section \ref{s:glueingandloc} for the detailed construction). This asymmetry between the two sizes implies $\norm{\overline R_q}_0\simeq \|R_q\|_0 \lambda_q^\gamma$  (see estimates \eqref{R_q:C0_est} and \eqref{e:pglueing7}), which together with \eqref{size_glued_R}, forces
\begin{equation}
\label{size_R_q}
\|R_q\|_0\simeq \delta_{q+1}\lambda_{q}^{-\gamma}
\end{equation}
to be the right inductive assumption at step $q$. The perturbation $w_{q+1}$ can now cancel the error $\overline R_q$, but we still need to force $\left|\supp_t R_{q+1} \right|\simeq \tau_{q+1}$. By looking at the definition of the new Reynolds stress in \eqref{d:R_q+1}, the easiest way is to localize the perturbation $w_{q+1}$ in such intervals by means of steep temporal cut-offs; that is by setting
$$
w_{q+1}:=\eta_p \tilde w_{q+1},
$$
where $\tilde w_{q+1}$ is a combination of highly oscillatory (at frequency $\lambda_{q+1}$) Mikado flows and $\eta_p$ is a time cut-off such that $\left|\supp_t \eta_p \right|\simeq \tau_{q+1}$. This of course implies that $\|\partial_t \eta_p\|_0\lesssim \tau_{q+1}^{-1}$  (see Lemma \ref{l:cutoff_pert}). In this way, we are inserting in $R_{q+1}$ (in particular in the transport error $R_{transp}$) a term that looks like $\partial_t \eta_p \mathcal R \tilde w_{q+1}$, which from Proposition \ref{p:R:oscillatory_phase} satisfies 
\begin{equation}
\label{temp_error}
\left\|\partial_t \eta_p \mathcal R \tilde w_{q+1} \right\|_0\lesssim \frac{\delta_{q+1}^{\sfrac12}}{\lambda_{q+1}}\tau_{q+1}^{-1}\simeq \frac{\delta_{q+1}^{\sfrac12}\delta_q^{\sfrac12} \lambda_q^{1+\gamma}}{\lambda_{q+1}}.
\end{equation}
Finally, to close the inductive estimate, we need to check that the bound in \eqref{temp_error} is below the one inductively assumed in \eqref{size_R_q}, at step $q+1$, which is 
\begin{equation}
\label{bad_relation}
 \frac{\delta_{q+1}^{\sfrac12}\delta_q^{\sfrac12} \lambda_q^{1+\gamma}}{\lambda_{q+1}}\leq \delta_{q+2}\lambda_{q+1}^{-\gamma}.
\end{equation}
As already done in \eqref{param_ineq}, the previous relation gives the bound \eqref{gamma:bound2} on $\gamma$.

To end the discussion, we believe that a possible way to prove Conjecture \ref{conject}, could be to find a way to construct $\overline R_q$ such that there is no $\lambda_q^{\gamma}$ loss in size with respect to $R_q$, or more explicitly
$$
\norm{\overline R_q}_0\simeq \|R_q\|_0 \quad \text{and} \quad \left|\supp_t \overline R_q \right|\simeq \tau_{q+1}.
$$
In this way, the inductive hypothesis on the Reynolds stress becomes $\|R_q\|_0\lesssim \delta_{q+1}$, and consequently, the $\lambda_{q+1}^{-\gamma}$ disappears from the right hand side of \eqref{bad_relation}, allowing $\gamma$  to reach the sharp threshold $\overline \gamma(b,\beta)\simeq (b-1)(1-\beta-2\beta b)$.

\section{Gluing and localization step: Proof of Proposition \ref{p:glueing}}\label{s:glueingandloc}

In this section, we prove Proposition \ref{p:glueing}. To make our construction compatible with the choice of all parameters, we choose $a$  large enough (depending on all the parameters $\beta, b, \gamma, \alpha$, but not on $q$) such that\footnote{ Indeed, in order to guarantee the first inequality, it suffices to require $5\lambda_q^{-\gamma} < 1$ which is enforced if $a$ is such that $10\pi a^{-\gamma} < 1\,.$ To ensure the second inequality, we observe that by \eqref{gamma:bound1}, we have for $\alpha$ small enough 
$$2\frac{\theta_{q+1}}{\tau_{q}}\lesssim a^{b^{q-1}(\gamma-(b-1)(1+3\alpha-\beta))} \lesssim a^{-((b-1)(1+3\alpha-\beta)-\gamma)/b}\ll 1\,.$$}
\begin{equation}\label{e:timecompatibility}
5 \tau_{q+1} < \theta_{q+1} \quad \text{ and } \quad 2 \theta_{q+1} < \tau_q \,.
\end{equation} 

We now fix a couple $(v_q, R_q)$ solving \eqref{ER} on $\T^3 \times [0, T]$ together with a set $\mathcal B_q \subset [0, T]$ satisfying the hypothesis of Proposition \ref{p:iterativeprop}. By assumption \ref{Bq:vi} on $\mathcal{B}_q$, we can write $\mathcal{B}_q$ as a disjoint union of finitely many open intervals $J_q$ of length $5 \tau_q$ and consequently, by setting $\widehat{J}_q := \left\{ t \in J_q: \dist(t, \mathcal{G}_q)>\tau_{q} \right\}$, we can write $\widehat{\mathcal{B}}_q$  as disjoint union of intervals $\widehat J_q$ of length $3\tau_{q}\,.$ Observe that by \eqref{e:timecompatibility}, we have $\dist(\widehat{J}_q, \mathcal{G}_q) = \tau_{q} > 2 \theta_{q+1}\,.$

For every such interval $J_q$, we will first construct a smooth solution $\left(\overline v_q, \overline R_q\right)$ to \eqref{ER} on $\T^3 \times J_q$ and equidistributed intervals $\{ I_i \}_{i=0}^{n+1}\subset J_q $ with 
$ n:= \ceil*{ \frac{3\tau_q}{\theta_{q+1}}}$ such that
\begin{enumerate}[label=\normalfont(\alph*)]
\item\label{intervals} $\dist(I_i, I_{i+1})= \theta_{q+1}\,, $ $\lvert I_i \rvert  = \tau_{q+1}\,,$ and all the $I_i$ lie in a $2\theta_{q+1}$ neighbourhood of $\widehat{J}_q$, that is
$$\bigcup_{i=0}^{n+1} I_i \subset \left\{ t \in J_q: \dist\left(t, \widehat{J}_q\right) < 2\theta_{q+1} \right\}\,,$$
\item\label{support} $\supp \overline R_q \subset \T^3 \times \bigcup_{i=0}^{n+1} I_i\,,$
\item\label{compatability} $\overline v_q(t)= v_q(t) \, \quad \forall t \in \left \{ t \in J_q: \dist\left(t, \widehat{J}_q\right) \geq 2 \theta_{q+1} \right\} \,,$
\item \label{estimates}$\left(\overline v_q, \overline R_q\right)$ satisfies the estimates \eqref{e:pglueing4}--\eqref{e:pglueing8} when restricted to times  $t \in J_q\,.$
\end{enumerate} 

Properties \ref{support} and \ref{compatability} allows to extend the different $\left(\overline v_q, \overline R_q\right)$ (coming from different intervals $J_q$) to a smooth couple $\left(\overline v_q, \overline R_q\right)$ solving \eqref{ER} on $\T^3 \times [0,T]$ by setting 
\begin{equation*}
\overline v_q(t):= v_q(t) \text{ and } \overline  R_q(t) := 0 \quad   \forall t \in \mathcal{G}_q= [0,T] \setminus  \mathcal{B}_q\,.
\end{equation*}
By construction, $\overline v_q$ satisfies \eqref{e:pglueing1}. The new bad set $\mathcal{B}_{q+1}$ is obtained by enlarging the intervals $I_i$ by $2 \tau_{q+1}$ on either side;
\begin{equation}
\mathcal{B}_{q+1}:= \left\{ t \in [0,T] \,:  t \in J_q \text{ for one of the disjoint intervals of } \mathcal{B}_q \text{ and }\dist\left(t,  \bigcup_{ i=0}^{n+1} I_i \right) < 2 \tau_{q+1} \right\} \,.\label{def_Bq+1}
\end{equation}
Using \eqref{e:timecompatibility} and \ref{intervals}, it is easy to see that $\mathcal{B}_{q+1}$ is made of disjoint intervals of length $5 \tau_{q+1}$ and $\mathcal{B}_{q+1}\subset  \bigcup J_q= \mathcal{B}_q$. In particular, the new bad set satisfies the properties \ref{Bq:i}--\ref{Bq:iii}, and it remains to verify \ref{Bq:iv}. By construction 
\begin{equation*}
\lvert \mathcal{B}_{q+1} \rvert = 5 \tau_{q+1} \frac{\lvert \mathcal{B}_{q}\rvert}{5\tau_q}  \left( \ceil*{ \frac{3\tau_q}{\theta_{q+1}}} +2 \right) \leq 10 \lvert \mathcal{B}_q \rvert \frac{\tau_{q+1}}{\theta_{q+1}} \,,
\end{equation*}
where in the last inequality, we are assuming $\alpha$ small and $a$ large enough. Thus also \ref{Bq:iv} holds true. Finally, with this definition of $\mathcal{B}_{q+1}$, the property \eqref{e:pglueing2} is an immediate consequence of \ref{support}, and the estimates \eqref{e:pglueing4}--\eqref{e:pglueing8} are a consequence of \ref{estimates}. Indeed, for times $t \in \mathcal{B}_q$ the estimates hold by \ref{estimates}. For $t\in \mathcal{G}_q=[0,T] \setminus \mathcal{B}_q$ we have $\overline v_q(t)=v_q(t)$ and $\overline R_q(t) \equiv 0$ which makes estimates \eqref{e:pglueing4}, \eqref{e:pglueing7} and \eqref{e:pglueing8} trivial. Estimates \eqref{e:pglueing5} and \eqref{e:pglueing6} hold then by triangular inequality, \eqref{e:vlCN} and assumption \ref{Bq:vii} (see also the remarks preceeding \eqref{e:glueingbetterest}). 

For the rest of this section, we thus fix one of the intervals $J_q$ and the corresponding $\widehat{J}_q\,.$

\subsection{Construction of $(\overline v_q, \overline R_q)$} We start by picking the equidistributed times $ t_0 <t_1< \dots < t_n$ by setting $t_0$ to be the left endpoint of the interval $\widehat{J}_q$ and by setting inductively $t_{i+1}:=  t_i+ \theta_{q+1}$ until reaching 
\begin{equation}
n:= \ceil*{ \frac{\lvert \widehat{J}_q \rvert}{\theta_{q+1}}} =\ceil*{\frac{3 \tau_q}{\theta_{q+1}}} \,. 
\end{equation}
In other words, $t_{n}$ is the right endpoint of $\widehat{J}_q$ in case $\theta_{q+1}$ happens to be a multiple of $\tau_q$, otherwise it is the first time falling thereafter. This procedure is compatible with the choice of parameters by \eqref{e:timecompatibility}. For each $i=0, \dots, n\,,$ we now consider the smooth solutions $(v_i, p_i)$ of the Euler equations with initial datum $v_\ell(t_i)$ defined on their maximal time of existence, that is
\begin{equation}
\begin{cases}
\partial_t v_i + \div(v_i \otimes v_i) + \nabla p_i = 0 \\
\div v_i=0 \\
v_i(\cdot, t_i)= v_\ell (t_i) \,,
\end{cases} 
\end{equation}
where $v_\ell$ is the spatial mollification of $v_q$ at length scale $\ell=\ell_q$ defined in \eqref{e:l}. From Proposition \ref{p:local:Euler}, \eqref{e:existencetime} and \eqref{e:vlCN}, it follows that each $v_i$ exists for times $\lvert t-t_i \rvert \leq 2\theta_{q+1}$ and enjoys the estimate
\begin{equation}\label{e:viCN}
\norm{ v_i (t)}_{N+\alpha} \lesssim \norm{v_\ell(t_i)}_{N+\alpha} \lesssim \delta_q^{\sfrac{1}{2}}\lambda_q \ell^{1-N-\alpha}, \quad \text{ for } \lvert t-t_i \rvert \leq 2 \theta_{q+1} \text{ and } N \geq 1  \,.
\end{equation}

We now glue the exact solutions $v_i$ by means of steep cutoffs $\eta_g^i$ centered in $t_i$ which are constructed in the following

\begin{lemma}\label{l:cutoffglueing} Let $\widehat{J}_q$ be one of the disjoint intervals $\widehat{B}_q$ is made of and let $t_0 <t_1 < \dots<t_n$ be the equidistributed points picked before. There exists a family of cutoff $\{ \eta_g^{i} \}_{i=0}^{n} \in C^\infty_c((0,T))$ such that
\begin{enumerate}[label=\normalfont(\alph*)]
\item\label{eta:a} $\eta_g(t):= \sum_{i=0}^{n} \eta_g^i (t) =1 \quad \forall t \in \widehat{J}_q \,,$ 
\item\label{eta:b} $\eta_g^i$ is supported in an interval of size $< 2\theta_{q+1}$ centered at $t_i\,.$ More precisely,
\begin{equation*}
\supp \eta_g^i \subset \left(t_i - \frac{\theta_{q+1}+ \tau_{q+1}}{2}, t_i + \frac{\theta_{q+1}+ \tau_{q+1}}{2}\right)\,,
\end{equation*}
\item\label{eta:c} $0 \leq \eta_g^i \leq 1$ and 
\begin{equation*}
\eta_g^i(t)=1 \quad \forall t \in \left[t_i - \frac{\theta_{q+1}- \tau_{q+1}}{2}, t_i + \frac{\theta_{q+1}- \tau_{q+1}}{2}\right]  \,,
\end{equation*}
\item\label{eta:e}  $\norm{\partial_t^N \eta_{g}^i}_{0} \lesssim \tau_{q+1}^{-N} \quad \forall N \geq 0 \,.$
\end{enumerate}
\end{lemma}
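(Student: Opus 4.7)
The plan is to assemble the family $\{\eta_g^i\}_{i=0}^n$ out of a single smooth building block. Fix once and for all an auxiliary profile $\rho \in C^\infty(\mathbb{R};[0,1])$ with $\rho(s) = 0$ for $s \leq -\tfrac12$, $\rho(s) = 1$ for $s \geq \tfrac12$, and the symmetry $\rho(s) + \rho(-s) \equiv 1$ (such a $\rho$ is easily obtained by mollifying $\mathbbm{1}_{[0,\infty)}$ with an even kernel supported in $(-\tfrac12,\tfrac12)$). Then, for $i=0,\dots,n$, I would simply define
\[
\eta_g^i(t) \;:=\; \rho\!\left(\frac{t - t_i + \theta_{q+1}/2}{\tau_{q+1}}\right) - \rho\!\left(\frac{t - t_i - \theta_{q+1}/2}{\tau_{q+1}}\right).
\]

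The properties (b), (c) and (d) can then be read off essentially for free from the defining behavior of $\rho$. The first summand jumps from $0$ to $1$ across the interval $[t_i - (\theta_{q+1}+\tau_{q+1})/2,\, t_i - (\theta_{q+1}-\tau_{q+1})/2]$ and equals $1$ thereafter, while the second stays at $0$ up to $t_i + (\theta_{q+1}-\tau_{q+1})/2$ and then jumps to $1$ on $[t_i + (\theta_{q+1}-\tau_{q+1})/2,\, t_i + (\theta_{q+1}+\tau_{q+1})/2]$. The compatibility condition $\theta_{q+1} > \tau_{q+1}$, guaranteed by \eqref{e:timecompatibility}, ensures that these two rising regions do not overlap, so $\eta_g^i \equiv 1$ on the plateau of (c) and $\eta_g^i \equiv 0$ outside the enlarged window of (b). The bound (d) is then immediate from the chain rule, each $\partial_t$ producing a factor $\tau_{q+1}^{-1}$.

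Property (a) reduces to a telescoping argument. Setting $\sigma_j(t) := \rho((t - t_j + \theta_{q+1}/2)/\tau_{q+1})$ and using $t_{j+1} - t_j = \theta_{q+1}$, the identity $t_i + \theta_{q+1}/2 = t_{i+1} - \theta_{q+1}/2$ yields $\eta_g^i = \sigma_i - \sigma_{i+1}$, and so
\[
\sum_{i=0}^n \eta_g^i(t) \;=\; \sigma_0(t) - \sigma_{n+1}(t),
\]
where $\sigma_{n+1}$ is defined by the same formula with the virtual point $t_{n+1} := t_n + \theta_{q+1}$. For $t \in \widehat J_q = [t_0, t_0 + 3\tau_q]$, one has $t \geq t_0$, hence $\sigma_0(t)=1$, while $t \leq t_0 + 3\tau_q \leq t_n$ together with $\theta_{q+1} > \tau_{q+1}$ forces the argument of $\sigma_{n+1}$ to be $\leq -\tfrac12$, hence $\sigma_{n+1}(t)=0$. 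This gives (a). The inclusion $\eta_g^i \in C^\infty_c((0,T))$ is automatic since each support lies in a $2\theta_{q+1}$-neighborhood of $\widehat J_q \subset J_q$, which by \eqref{e:timecompatibility} is contained in $J_q \subset (0,T)$.

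No real obstacle arises here: the construction is a standard smooth partition of unity whose transition scale has been deliberately tuned to $\tau_{q+1}$, and the only thing that could go wrong would be an overlap of the rise and fall regions of a single $\eta_g^i$, which is precisely what $\tau_{q+1} \ll \theta_{q+1}$ from \eqref{e:timeparameters}--\eqref{e:timecompatibility} is designed to prevent. If one wanted the support in (b) to be strictly open, it suffices to choose $\rho$ with $\rho \equiv 0$ on $(-\infty, -\tfrac12 + \varepsilon)$ and $\rho \equiv 1$ on $(\tfrac12 - \varepsilon, \infty)$ for some small $\varepsilon>0$, which costs nothing.
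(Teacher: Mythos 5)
Your construction is correct: the telescoping partition of unity built from a smooth step profile rescaled at scale $\tau_{q+1}$ and translated by steps of $\theta_{q+1}$ is exactly the standard construction the authors have in mind (the paper states the lemma without proof, treating it as routine), and your verifications of (a)--(d) — in particular the non-overlap of the rise and fall regions via $5\tau_{q+1}<\theta_{q+1}$ from \eqref{e:timecompatibility}, and the vanishing of $\sigma_{n+1}$ on $\widehat J_q$ since $t_n$ is at or beyond its right endpoint — are all sound. The only cosmetic point, strict openness of the support interval in (b), you already handle by shrinking the transition region of $\rho$.
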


Observe that by construction, $\partial_t \eta_{g}$ is supported strictly inside $\bigcup_{i={0}}^{n+1} I_i$, with
\begin{equation*}\label{def:Ii}
I_i:= \left( t_i-\frac{\theta_{q+1} +\tau_{q+1}}{2},  t_i-\frac{\theta_{q+1} +\tau_{q+1}}{2}\right),
\end{equation*}
and that from \ref{eta:a} and \ref{eta:b}, we have
\begin{equation*}
\eta_g^{i-1}(t)=1-\eta_g^i (t)  \quad \forall t \in I_i \quad \text{ and } \quad  i=1, \dots, n\,.
\end{equation*}

Since $\supp \eta_g^i \subset \{ t: \lvert t-t_i \rvert \leq 2 \theta_{q+1} \}\,,$ the following gluing of exact solutions is well-defined
\begin{align}\label{def:barvq}
\overline v_q(x,t) &:= \sum_{i=0}^n \eta_g^i(t) v_i(x,t) + (1-\eta_g(t)) v_q(x,t) \quad \text{ for } t \in J_q \\
\overline p_q(x,t)&:=\sum_{i=0}^n \eta_g^i(t) p_i(x,t) + (1-\eta_g(t)) p_q(x,t) \quad \text{ for } t \in J_q  \label{def:barpq}\,.
\end{align}
 
It follows that $\overline v_q$ is smooth and is an exact solution to Euler outside $\bigcup_{i=0}^{n+1} I_i$; more precisely
\begin{align*}
\partial_t \overline v_q + \div( \overline v_q &\otimes \overline v_q) + \nabla \overline p_q \\
&= \begin{cases}0 \quad \text{ in } J_q \setminus \bigcup_{i=0}^{n+1} I_i\,,&\\
\partial_t \eta_g^i\, (v_i-v_{i-1}) - \eta_g^i(1-\eta_g^i) \div ( (v_i-v_{i-1}) \otimes (v_i-v_{i-1})) &\quad \text{ in } I_i \text{ for } i \in \{ 1, \dots, n \},\\
\partial_t \eta_g^0\, (v_0-v_q) - \eta_g^0(1-\eta_g^0) \div ( (v_0-v_q) \otimes (v_0 - v_{q}))  &\quad \text{ in } I_0, \\
\partial_t \eta_g^n\,  (v_n-v_q) - \eta_g^n(1-\eta_g^n) \div ( (v_n-v_q) \otimes (v_n - v_{q}))  &\quad \text{ in } I_{n+1}.
\end{cases}
\end{align*}
Recall from \eqref{e:R:def} the inverse divergence operator $\mathcal{R}$ acting on vector fields with zero average. Since all the $v_i$ and $v_q$ have all the same average, we can define the new localized Reynolds stress by
\begin{align}\label{def:barRq}
\overline R_q := 
\begin{cases}
0 \quad \text{ in } \text{ in } J_q \setminus \bigcup_{i=0}^{n+1} I_i \,,\\
\partial_t \eta_g^i \, \mathcal{R} (v_i-v_{i-1}) - \eta_g^i(1-\eta_g^i) (( v_i-v_{i-1}) \otimes (v_i-v_{i-1})) \quad &\text{ in } I_i \text{ for } i \in \{ 1, \dots,  n \}\,,\\
\partial_t \eta_g^0 \, \mathcal{R} (v_0-v_q) - \eta_g^0(1-\eta_g^0) ((v_0-v_q) \otimes (v_0 - v_{q})) \quad &\text{ in } I_0, \\
\partial_t \eta_g^n \, \mathcal{R} (v_n-v_q) - \eta_g^n(1-\eta_g^n) ((v_n-v_q) \otimes (v_n - v_{q})) \quad &\text{ in } I_{n+1}.
\end{cases}
\end{align}
With this definition, the smooth couple $\left(\overline v_q, \overline R_q\right)$ solves \eqref{ER} on $\T^3 \times J_q$ and has already the desired localization property
\begin{equation}
\supp \overline R_q \subset \T^3 \times \bigcup_{i=0}^{n+1} I_i \quad \text{ with } \lvert I_i \rvert= \tau_{q+1}  \text{ and } n:= \ceil*{\frac{3\tau_q} {\theta_{q+1}}}\,.
\end{equation}

\begin{figure}
\begin{center}
 \includegraphics[height=0.4\textheight]{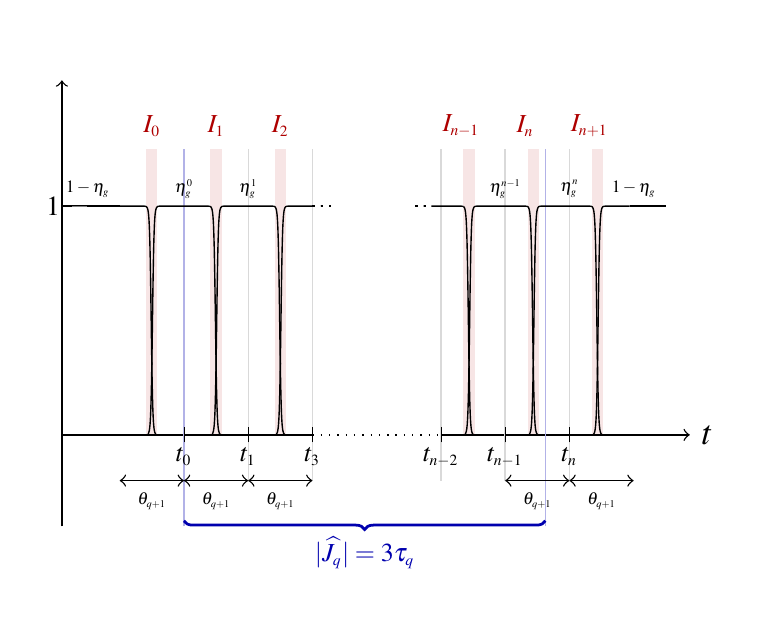}
\caption{ Gluing and localization procedure in one of the intervals of $\widehat{\mathcal{B}}_q$.}
\end{center}
\end{figure}

\subsection{Stability estimates on $v_i-v_\ell$ and improved bounds on $v_\ell-v_q$ on $\widehat{\mathcal{B}}_q^c$} 
We first establish stability estimates on two adjacent exact smooth solutions of Euler, $v_i$ and $v_{i+1}$. Since  $(v_i-v_{i+1}) = (v_i- v_\ell) + (v_\ell- v_{i+1})\,,$ it suffices to estimate $v_i-v_\ell\,.$ The proof of the following proposition follows closely \cite{BDLSV2019} with some minor changes. Here (and in what follows), we denote the material derivative by 
\begin{equation}\label{e:materialderivative}
D_{t, \ell}:= \partial_t + (v_\ell \cdot \nabla)
\end{equation}

\begin{proposition}\label{p:stabilityvi} For $\lvert t- t_i \rvert \leq 2\theta_{q+1}$  we have the estimates 
\begin{align}
\norm{(v_i- v_\ell)(t)}_{N+\alpha} &\lesssim \tau_{q+1} \delta_{q+1} \ell^{-N-1+\alpha} \quad &&\forall N \geq 0 \,, \label{e:vi-vlCN}\\
\norm{\nabla(p_i-p_\ell)(t)}_{N+ \alpha} &\lesssim \lambda_{q}^{-\gamma} \delta_{q+1} \ell^{-N-1+\alpha} \quad &&\forall N \geq 0 \,,\label{e:pi-plCN}\\
\norm{ D_{t,\ell} (v_i-v_\ell)(t)}_{N+\alpha}&\lesssim \lambda_{q}^{-\gamma} \delta_{q+1} \ell^{-N-1+\alpha} \quad &&\forall N \geq 0\,. \label{e:vi-vlmatCN}
\end{align}
\end{proposition}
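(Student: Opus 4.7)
The plan is to derive a coupled transport--elliptic system for the pair $\left(w_i, P_i\right):= \left(v_i- v_\ell,\, p_i- p_\ell\right)$ and use classical Cauchy estimates together with the fact that $w_i\left(\cdot, t_i\right)= 0$. Subtracting the Euler equation for $v_i$ from the Euler--Reynolds system for $v_\ell$ and using $v_i \cdot \nabla v_i - v_\ell \cdot \nabla v_\ell = v_\ell \cdot \nabla w_i + w_i \cdot \nabla v_i$ gives
\begin{equation*}
\begin{cases}
D_{t,\ell} w_i + w_i \cdot \nabla v_i + \nabla P_i = -\div R_\ell, \\
\div w_i = 0,\qquad \int_{\T^3} w_i\, dx = 0,\qquad w_i(\cdot, t_i)=0\,.
\end{cases}
\end{equation*}
Taking the divergence (using $\div v_\ell=\div v_i=\div w_i=0$) produces the elliptic equation
\begin{equation*}
-\Delta P_i = \trace\left( \nabla v_i \nabla w_i + \nabla v_\ell \nabla w_i\right) + \div\div R_\ell\,,
\end{equation*}
which by Schauder/Calder\'on--Zygmund estimates yields, for every $N\geq 0$,
\begin{equation*}
\norm{\nabla P_i}_{N+\alpha}\lesssim \norm{\nabla v_i}_{N+\alpha}\norm{w_i}_{N+\alpha}+ \norm{\nabla v_\ell}_{N+\alpha}\norm{w_i}_{N+\alpha}+\norm{\div R_\ell}_{N+\alpha}\,.
\end{equation*}

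Next, I would apply the standard transport estimates (the analogue of \cite{BDLIS15}, Propositions D.1--D.2) to the Cauchy problem for $w_i$ along the flow of $v_\ell$, exploiting the small-time condition \eqref{e:existencetime} to suppress flow distortion. Since $w_i(t_i)=0$, this gives, for $|t-t_i| \leq 2\theta_{q+1}$,
\begin{equation*}
\norm{w_i(t)}_{N+\alpha}\lesssim \int_{t_i}^{t} \bigl(\norm{w_i}_{N+\alpha}\norm{\nabla v_i}_{\alpha}+\norm{w_i}_{\alpha}\norm{\nabla v_i}_{N+\alpha}+\norm{\nabla P_i}_{N+\alpha}+\norm{\div R_\ell}_{N+\alpha}\bigr) ds \,,
\end{equation*}
up to commutators of the form $\theta_{q+1}\|v_\ell\|_{1+\alpha}\|w_i\|_{N+\alpha}$, which are absorbable by \eqref{e:existencetime}. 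Inserting the pressure bound and the already established estimates \eqref{e:vlCN}, \eqref{e:viCN} and \eqref{e:RlCN}, a Gronwall argument closes the estimate at the size of the forcing $\norm{\div R_\ell}$ times the window length:
\begin{equation*}
\norm{w_i(t)}_{N+\alpha}\lesssim \theta_{q+1}\,\delta_{q+1}\lambda_q^{-\gamma-3\alpha}\ell^{-N-1-\alpha}\,.
\end{equation*}
Since $\theta_{q+1}\lambda_q^{-\gamma}=\tau_{q+1}$ and, by the choice of $\ell$ in \eqref{e:l}, $\lambda_q^{-3\alpha}\ell^{-2\alpha}\leq 1$ for $\alpha$ small, this gives \eqref{e:vi-vlCN} with room to spare.

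Once \eqref{e:vi-vlCN} is in hand, I would plug it back into the Schauder bound for $P_i$. The term $\norm{w_i}\norm{\nabla v_i}_{N+\alpha}$ is controlled by $\tau_{q+1}\delta_{q+1}\,\delta_q^{\sfrac12}\lambda_q\,\ell^{-N-1+\alpha}$, which is (much) smaller than $\lambda_q^{-\gamma}\delta_{q+1}\ell^{-N-1+\alpha}$ since $\tau_{q+1}\delta_q^{\sfrac12}\lambda_q=\lambda_q^{-\gamma-3\alpha}$, and the remaining contribution from $\norm{\div R_\ell}_{N+\alpha}$ is similarly absorbed into $\lambda_q^{-\gamma}\delta_{q+1}\ell^{-N-1+\alpha}$ using \eqref{e:l}. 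This yields \eqref{e:pi-plCN}. Finally, \eqref{e:vi-vlmatCN} is immediate from the equation itself: writing $D_{t,\ell}w_i = -w_i\cdot\nabla v_i-\nabla P_i - \div R_\ell$ and using \eqref{e:viCN}, \eqref{e:vi-vlCN}, \eqref{e:pi-plCN} and \eqref{e:RlCN} term by term produces the same size $\lambda_q^{-\gamma}\delta_{q+1}\ell^{-N-1+\alpha}$.

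The main technical point is the bookkeeping in the Gronwall argument: one must verify that every coupling term involving $\|w_i\|$ on the right-hand side is suppressed by a factor $\theta_{q+1}\|v_\ell\|_{1+\alpha}\ll 1$ or, after integration in time, by $\theta_{q+1}\|\nabla v_i\|_{\alpha}\ll 1$, so that all such terms can be absorbed into the left-hand side at every order $N$. The existence-time condition \eqref{e:existencetime} together with \eqref{e:vlCN}--\eqref{e:viCN} is precisely what makes this possible; the rest is a careful, but standard, bootstrap.
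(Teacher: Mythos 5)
Your proposal follows essentially the same route as the paper: derive the transport--elliptic system for $v_i-v_\ell$ and $p_i-p_\ell$, apply Schauder to the pressure, and close a Gr\"onwall argument using Proposition \ref{p:transport} and the initial condition $w_i(\cdot,t_i)=0$, then read off the material-derivative bound from the equation. One imprecision worth flagging: your displayed Schauder estimate
$\norm{\nabla P_i}_{N+\alpha}\lesssim \norm{\nabla v_i}_{N+\alpha}\norm{w_i}_{N+\alpha}+ \norm{\nabla v_\ell}_{N+\alpha}\norm{w_i}_{N+\alpha}+\norm{\div R_\ell}_{N+\alpha}$
is too crude for $N\ge 1$; if you feed it back into the transport inequality, the coefficient multiplying $\norm{w_i}_{N+\alpha}$ in the Gr\"onwall loop becomes $\norm{\nabla v_i}_{N+\alpha}\lesssim \delta_q^{1/2}\lambda_q\ell^{-N-\alpha}$, and $\theta_{q+1}\,\delta_q^{1/2}\lambda_q\ell^{-N-\alpha}=\lambda_q^{-3\alpha}\ell^{-N-\alpha}$ is \emph{not} $\lesssim 1$ for large $N$, so the Gr\"onwall fails to close. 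You must instead invoke the H\"older product rule \eqref{e:Holderproduct} to split into
$\bigl(\norm{\nabla v_\ell}_{N+\alpha}+\norm{\nabla v_i}_{N+\alpha}\bigr)\norm{w_i}_\alpha+\bigl(\norm{\nabla v_\ell}_{\alpha}+\norm{\nabla v_i}_{\alpha}\bigr)\norm{w_i}_{N+\alpha}+\norm{\div R_\ell}_{N+\alpha}$,
so that the coefficient of $\norm{w_i}_{N+\alpha}$ is $N$-independent and $\theta_{q+1}\delta_q^{1/2}\lambda_q\ell^{-\alpha}\le 1$. Your transport inequality does carry this correct split for the term $w_i\cdot\nabla v_i$, and your subsequent discussion plugs in $\norm{w_i}_\alpha$ rather than $\norm{w_i}_{N+\alpha}$, so you evidently understand the mechanism; just make the Schauder display consistent with it.
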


\begin{proof}[Proof of Proposition \ref{p:stabilityvi}] Observe that $(v_\ell-v_i)$ is divergence-free and it solves
\begin{equation}\label{e:vl-vi}
\partial_t (v_\ell-v_i) + (v_\ell \cdot \nabla) (v_\ell-v_i) + \nabla (p_\ell-p_i)= ((v_i-v_\ell)\cdot \nabla)v_i  + \div R_\ell \,,
\end{equation}
so that, by taking the divergence, we find the following equation for the pressure term
\begin{equation}\label{e:pl-pi}
\Delta (p_\ell-p_i) =- \div(\nabla v_\ell (v_\ell-v_i)) + \div(\nabla v_i (v_i-v_\ell))   + \div \div R_\ell \,.
\end{equation}
By Schauder estimates we get
\begin{equation}\label{e:vl-vipres}
\begin{split}
\norm{\nabla(p_\ell-p_i)}_\alpha &\lesssim \left(\norm{ \nabla v_\ell}_\alpha + \norm{\nabla v_i}_\alpha\right) \norm{v_\ell-v_i}_\alpha + \norm{\div R_\ell}_\alpha \\
&\lesssim \delta_q^{\sfrac{1}{2}} \lambda_q \ell^{-\alpha} \norm{v_\ell-v_i}_\alpha + \lambda_q^{-\gamma-3\alpha} \delta_{q+1} \ell^{-1-\alpha} \,,
\end{split}
\end{equation}
where we used \eqref{e:vlCN} and \eqref{e:viCN} (together with \ref{e:Holderinterpolation2}) as well as  \eqref{e:RlCN} in the last inequality. Hence
\begin{equation}\label{e:vl-vimat}
\norm{D_{t, \ell} (v_\ell-v_i) (t)}_\alpha \lesssim  \delta_q^{\sfrac{1}{2}} \lambda_q \ell^{-\alpha} \norm{(v_\ell-v_i)(t)}_\alpha + \lambda_q^{-\gamma-3\alpha} \delta_{q+1} \ell^{-1-\alpha}, \quad \text{if } \lvert t-t_i \rvert \leq 2 \theta_{q+1} \,.
\end{equation}
Since $(v_\ell - v_i)(t_i)=0$, we deduce from Proposition \ref{p:transport} that
\begin{equation*}
\norm{ (v_\ell-v_i)(t)}_\alpha \lesssim \int_{t_i}^t \left(\delta_q^{\sfrac{1}{2}} \lambda_q \ell^{-\alpha} \norm{(v_\ell-v_i)(\tau)}_\alpha + \lambda_q^{-\gamma-3\alpha} \delta_{q+1} \ell^{-1-\alpha}\right) \, d \tau \,.
\end{equation*}
Using that $\theta_{q+1}\delta_q^{\sfrac{1}{2}} \lambda_q \ell^{-\alpha}= \lambda_q^{-3\alpha} \ell^{-\alpha}\leq \ell^{\alpha}\leq 1 $ by \eqref{l:bound}, we deduce from Gr\"onwall's inequality that 
\begin{equation}\label{e:vl-viN=1}
\norm{ (v_\ell-v_i)(t)}_\alpha \lesssim \lambda_q^{-\gamma-3\alpha}\delta_{q+1} \ell^{-1-\alpha} \theta_{q+1} \lesssim \tau_{q+1} \delta_{q+1}  \ell^{-1+\alpha}, \quad  \text{if }  \lvert t-t_i \rvert \leq 2 \theta_{q+1} \,.
\end{equation}
Inserting this estimate in \eqref{e:vl-vipres} and \eqref{e:vl-vimat}, we have obtained the claimed estimates for $N=0\,.$ 

For $N\geq 1$, we fix a spatial derivative $\partial^\theta$ of order $\lvert \theta \rvert =N\,.$ We differentiate \eqref{e:vl-vi} and estimate, using the interpolation inequality for the H\"older norm of products \eqref{e:Holderproduct} on the nonlinear term, 
\begin{align}
\norm{ \partial^\theta D_{t, \ell} (v_\ell-v_i)}_\alpha &\lesssim \norm{\nabla(p_\ell-p_i)}_{N+\alpha} +\norm{v_\ell-v_i}_{N+\alpha} \norm{\nabla v_i}_\alpha + \norm{ v_\ell-v_i}_\alpha \norm{\nabla v_i}_{N+ \alpha}+ \norm{\div R_\ell}_{N+\alpha} \nonumber \\
&\lesssim  \norm{\nabla(p_\ell-p_i)}_{N+\alpha} +\delta_q^{\sfrac{1}{2}} \lambda_q \ell^{-\alpha} \norm{v_i-v_\ell}_{N+\alpha}  + \lambda_{q}^{-\gamma-3\alpha} \delta_{q+1} \ell^{-N-1-\alpha} \,,\label{N_der_Dtl_v_l-v_i}
\end{align}
where the second inequality is a consequence of \eqref{e:viCN}, \eqref{e:vl-viN=1} and \eqref{e:RlCN}. Reusing the equation \eqref{e:pl-pi} for the pressure term and Proposition \ref{p:CZO_C_alpha}, we have, arguing as before, that
\begin{align}
&\norm{\nabla (p_\ell-p_i)}_{N+\alpha} \nonumber \\
 &\lesssim (\norm{\nabla v_\ell}_{N+\alpha} + \norm{\nabla v_i}_{N+\alpha}) \norm{v_\ell-v_i}_\alpha + (\norm{\nabla v_\ell}_\alpha + \norm{\nabla v_i}_\alpha) \norm{v_\ell-v_i}_{N+\alpha} + \norm{\div R_\ell}_{N+\alpha} \nonumber \\
&\lesssim \delta_q^{\sfrac{1}{2}} \lambda_q \ell^{-\alpha} \norm{v_i-v_\ell}_{N+\alpha} + \lambda_q^{-\gamma-3\alpha} \delta_{q+1} \ell^{-N-1-\alpha}\,. \label{e:pi-plCNaux}
\end{align}
We now write 
$ \partial^\theta D_{t, \ell} (v_\ell- v_i)= D_{t, \ell} \partial^\theta (v_\ell-v_i) + [\partial^\theta, (v_\ell \cdot \nabla) ] (v_\ell-v_i)$ and observe, using the Leibniz rule, that the commutator $ [\partial^\theta, (v_\ell \cdot \nabla) ] (v_\ell-v_i)$ involves only spatial derivatives of order at most $N$ of $v_\ell-v_i\,.$ Using again \eqref{e:Holderproduct} to estimate all the nonlinear terms appearing in the commutator, \eqref{e:Holderinterpolation2} and Young, we have
\begin{align*}
&\norm{[\partial^\theta, (v_\ell \cdot \nabla) ] (v_\ell-v_i)}_\alpha \\
&\lesssim \sum_{k=1}^N \norm{v_\ell}_{k+\alpha} \norm{ v_\ell-v_i}_{N+1-k+\alpha}
\lesssim \norm{ v_\ell}_{1+\alpha} \norm{ v_\ell-v_i}_{N+\alpha} + \norm{v_\ell}_{N+1+\alpha} \norm{v_\ell-v_i}_{\alpha} \\
&\lesssim \delta_q^{\sfrac{1}{2}} \lambda_q \ell^{-\alpha} \norm{ v_\ell-v_i}_{N+\alpha}+  \lambda_q^{-\gamma-3\alpha} \delta_{q+1} \ell^{-N-1-\alpha}\,,
\end{align*}
where the last inequality uses again \eqref{e:vlCN} and \eqref{e:vl-viN=1}. Collecting terms, we obtain
\begin{equation}\label{e:vi-vlmatCNaux}
\norm{D_{t, \ell} \partial^\theta (v_\ell-v_i)(t)}_{\alpha} \lesssim  \delta_q^{\sfrac{1}{2}} \lambda_q \ell^{-\alpha} \norm{(v_\ell-v_i)(t) }_{N+\alpha}+  \lambda_q^{-\gamma-3\alpha} \delta_{q+1} \ell^{-N-1-\alpha}, \quad  \text{if }  \lvert t-t_i \rvert \leq 2 \theta_{q+1} \,.
\end{equation}
Reusing Proposition \ref{p:transport} together with the fact that $\partial^\theta (v_\ell-v_i)(t_i)=0$, we have
\begin{equation*}
\norm{(v_\ell-v_i)(t)}_{N+\alpha} \lesssim \int_{t_i}^t \left(\delta_q^{\sfrac{1}{2}} \lambda_q \ell^{-\alpha} \norm{(v_\ell-v_i)(\tau)}_{N+\alpha}+  \lambda_q^{-\gamma} \delta_{q+1} \ell^{-N-1+\alpha} \right) \, d\tau, \quad  \text{if }  \lvert t-t_i \rvert \leq 2 \theta_{q+1} \,, 
\end{equation*}
where we also used again that $\lambda_{q}^{-3\alpha} \leq \ell^{2\alpha}$ by \eqref{l:bound}. Closing a Gr\"onwall exactly as before, we deduce \eqref{e:vi-vlCN}. Inserting this estimate in \eqref{e:pi-plCNaux} and \eqref{N_der_Dtl_v_l-v_i}, we conclude \eqref{e:pi-plCN} and \eqref{e:vi-vlmatCN} as well.
\end{proof}

At difference from \cite{BDLSV2019},  $\overline v_q$ is not purely a gluing of exact solutions $v_i$ to Euler from an initial datum $v_\ell(t_i)\,.$ Instead, we glue the first exact solution $v_0$ to $v_q$ in the interval $I_0$ and the last exact solution $v_n$ to $ v_q$ in the interval $I_{n+1}$. This is necessary in order to guarantee that $\overline v_q(t)= v_q(t)$ outside the new bad set and hence the crucial property \ref{Bq:v}. In addition to Proposition \ref{p:stabilityvi}, we thus need improved estimates (with respect to Proposition \ref{p:mollifaction}) on $v_q-v_\ell\,$ in $I_0 \cup I_{n+1}\,.$ Since $v_q(t) \neq v_\ell(t)$, such estimates can no longer rely on stability estimates via closing a suitable Gr\"onwall inequality, but solely on mollification estimates and the better estimates \eqref{v_q:betterestimate} of $v_q$ on $\widehat{\mathcal{B}}_q^c$, which the inductive assumption \ref{Bq:vii} guarantees.

\begin{proposition}\label{p:betterestvq-vl} For $t \in \widehat{\mathcal{B}}_{q}^c\,,$
we have the estimates 
\begin{align}
\norm{(v_q- v_\ell)(t)}_{N+\alpha} &\lesssim \tau_{q+1} \delta_{q+1} \ell^{-N-1+\alpha} \quad &&\forall N \geq 0 \,, \label{e:vq-vlCNbetter}\\
\norm{\nabla(p_q-p_\ell)(t)}_{N+ \alpha} &\lesssim \lambda_{q}^{-\gamma} \delta_{q+1} \ell^{-N-1+\alpha} \quad &&\forall N \geq 0 \,,\label{e:pq-plCNbetter}\\
\norm{ D_{t,\ell} (v_q-v_\ell)(t)}_{N+\alpha}&\lesssim \lambda_{q}^{-\gamma} \delta_{q+1} \ell^{-N-1+\alpha} \quad &&\forall N \geq 0\,. \label{e:vq-vlmatCNbetter}
\end{align}
where $D_{t, \ell}$ is the material derivative defined in \eqref{e:materialderivative}.
\end{proposition}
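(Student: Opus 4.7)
The strategy is to adapt the proof of Proposition \ref{p:stabilityvi}, replacing the Gr\"onwall-type stability argument used there (which relied on the vanishing $v_i(t_i) - v_\ell(t_i) = 0$) by direct spatial mollification bounds. The crucial input that compensates for the absence of any vanishing initial condition is the improved regularity \eqref{v_q:betterestimate} on $v_q$ guaranteed by the inductive hypothesis \ref{Bq:vii}: on $\widehat{\mathcal{B}}_q^c$, the field $v_q$ is ``as smooth as something coming from the previous step'', with derivative estimates governed by the smaller mollification scale $\ell_{q-1}$.

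First I would prove \eqref{e:vq-vlCNbetter} by combining a standard spatial mollification estimate,
\[
\|v_q(t) - v_\ell(t)\|_{N+\alpha} \lesssim \ell^{2-\alpha}\, \|v_q(t)\|_{N+2}\,,
\]
with the bound $\|v_q(t)\|_{N+2} \lesssim \delta_{q-1}^{\sfrac{1}{2}}\lambda_{q-1}\ell_{q-1}^{-N-1}$ supplied by \eqref{v_q:betterestimate}. A direct check, using the definitions of $\ell=\ell_q$ and $\ell_{q-1}$ from \eqref{e:l}, of $\tau_{q+1}=\lambda_q^{-\gamma}\theta_{q+1}$, and of $\theta_{q+1},\,\delta_q,\,\lambda_q$, shows that the resulting product is bounded by $\tau_{q+1}\delta_{q+1}\ell^{-N-1+\alpha}$ provided $\alpha$ is small enough and $a$ is large enough, independently of $q$.

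Next I would exploit that on $\widehat{\mathcal{B}}_q^c$ the Reynolds stress $R_q$ vanishes by \ref{Bq:vi}, so that $R_\ell$ reduces on this set to the pure commutator term $(v_q\otimes v_q)\ast \varphi_\ell - v_\ell \otimes v_\ell$. In particular, for $t \in \widehat{\mathcal{B}}_q^c$, the difference $(v_\ell - v_q)$ satisfies the same equation as \eqref{e:vl-vi}, only with $v_i$ replaced by $v_q$ and $p_i$ by $p_q$; taking the divergence yields the analogue of \eqref{e:pl-pi}. From this PDE the pressure bound \eqref{e:pq-plCNbetter} follows exactly as in the proof of Proposition \ref{p:stabilityvi}, via Schauder estimates and Proposition \ref{p:CZO_C_alpha}, using \eqref{v_q:betterestimate} in place of \eqref{v_q:C1_est}, together with \eqref{e:RlCN} and the bound \eqref{e:vq-vlCNbetter} just proved. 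The material derivative estimate \eqref{e:vq-vlmatCNbetter} then follows by reading the PDE for $v_\ell - v_q$ as an expression for $D_{t,\ell}(v_q - v_\ell)$ and bounding each term via \eqref{v_q:betterestimate}, \eqref{e:vq-vlCNbetter}, \eqref{e:pq-plCNbetter}, \eqref{e:RlCN}, and the Leibniz-type product estimates.

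The main obstacle is a purely bookkeeping one: checking that the mollification-based upper bound for $\|v_q - v_\ell\|_{N+\alpha}$ indeed lies below the threshold $\tau_{q+1}\delta_{q+1}\ell^{-N-1+\alpha}$. Reducing to powers of $\lambda_{q-1}$, this amounts to an elementary polynomial inequality in the parameters $\beta, b, \gamma, \alpha$ which, by analogy with the verifications leading to \eqref{gamma:bound1}--\eqref{gamma:bound2}, can be arranged by choosing $\alpha$ sufficiently small; all subsequent estimates on the pressure and on the material derivative inherit this compatibility automatically.
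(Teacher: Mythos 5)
Your proposal is correct and follows essentially the same route as the paper. Both proofs combine (i) a spatial mollification estimate for $v_q - v_\ell$ with (ii) the inductively improved regularity \eqref{v_q:betterestimate} of $v_q$ on $\widehat{\mathcal{B}}_q^c$ to prove \eqref{e:vq-vlCNbetter}, then feed that into the Euler--Reynolds equation for $v_\ell - v_q$ (which, as you correctly observe, is an exact Euler difference equation on $\widehat{\mathcal{B}}_q^c$ since $R_q \equiv 0$ there) together with Schauder estimates to get the pressure and material derivative bounds. The only cosmetic difference is that you invoke the second-order radial mollification bound \eqref{improved_moll}, $\|v_q - v_\ell\|_{N+\alpha} \lesssim \ell^{2-\alpha}\|v_q\|_{N+2}$, paired with $\|v_q\|_{N+2}\lesssim \delta_{q-1}^{\sfrac12}\lambda_{q-1}\ell^{-N-1}$, whereas the paper uses the first-order bound $\|v_q - v_\ell\|_{N+\alpha}\lesssim \ell^{1-\alpha}\|v_q\|_{N+1}$ paired with $\|v_q\|_{N+1}\lesssim \delta_{q-1}^{\sfrac12}\lambda_{q-1}\ell^{-N}$; both yield the identical intermediate bound $\ell^{1-N-\alpha}\delta_{q-1}^{\sfrac12}\lambda_{q-1}$, and both reduce to the same parameter inequality $\ell^{-2\alpha}\,\delta_{q-1}^{\sfrac12}\lambda_{q-1}\,(\delta_q^{\sfrac12}\lambda_q)^{-1}\lesssim 1$, which the paper records as \eqref{e:vqaux2} and which holds for $\alpha$ small (namely $6\alpha b\leq (b-1)(1-\beta)$). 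One small clarification: in the Schauder estimate for $\nabla(p_q - p_\ell)$ you still need the bounds \eqref{e:vlCN} on $v_\ell$ (not just the improved bounds on $v_q$), since the $v_\ell$ terms dominate; this is implicit in the paper's argument and should be kept in yours as well.
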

\begin{proof} We recall from assumption \ref{Bq:vii}, that $v_q$ satisfies the better estimates \eqref{v_q:betterestimate} on $\widehat{\mathcal{B}}_q^c\,.$ Since $ \ell_{q-1}^{-N} \leq   \ell_{q}^{-N}$ by the definition, we have in particular (with $\ell= \ell_q$ as before)
\begin{equation}\label{e:glueingbetterest}
\norm{ v_q (t)}_{N+1} \leq \delta_{q-1}^{\sfrac{1}{2}} \lambda_{q-1} \ell^{-N} \quad \forall t \in \widehat{\mathcal{B}}_{q}^c \text{ and } \forall N \geq 0 \,.
\end{equation}
We deduce from standard mollification estimates, as in the proof of Proposition \ref{p:mollifaction}, that for $t \in \widehat{\mathcal{B}}_{q}^c$
\begin{equation}\label{e:vqaux}
\norm{ (v_q-v_\ell)(t)}_{N+\alpha} \lesssim \ell^{1-N-\alpha} \delta_{q-1}^{\sfrac{1}{2}} \lambda_{q-1} = \tau_{q+1} \delta_{q+1} \ell^{-N-1+ \alpha} \ell^{-2\alpha} \frac{\delta_{q-1}^{\sfrac{1}{2}}\lambda_{q-1}}{\delta_q^{\sfrac{1}{2}}\lambda_q} \lesssim \tau_{q+1} \delta_{q+1} \ell^{-N-1+ \alpha}
\end{equation}
where  in the last inequality, we used \eqref{l:bound} together with 
\begin{equation}\label{e:vqaux2}
\delta_{q-1}^{\sfrac{1}{2}} \lambda_{q-1} \big(\delta_q^{\sfrac{1}{2}}\lambda_q \big)^{-1}\leq \lambda_q^{-6\alpha} \leq \lambda_q^{-3\alpha}  \,,
\end{equation}
which holds true if we require that $\alpha$ is chosen sufficiently small in order to satisfy
\begin{equation*}
6\alpha b \leq (b-1)(1-\beta) \,.
\end{equation*} 
Observe that $(v_\ell-v_q)$ is divergence-free and that, since $R_q\equiv 0$ on $\T^3 \times \widehat{\mathcal{B}}_{q}^c$ by assumption \ref{Bq:vi}, $v_q$ is an exact solution of Euler on $\T^3 \times \widehat{\mathcal{B}}_{q}^c\,.$ Consequently,  $v_\ell-v_q$ satisfies \eqref{e:vl-vi} (with $v_i$ replaced by $v_q$) and $p_\ell-p_q$ satisfies \eqref{e:pl-pi} (with $p_i$ replaced by $p_q$) on $\T^3 \times \widehat{\mathcal{B}}_{q}^c\,.$ By Proposition \ref{p:CZO_C_alpha}, we deduce, using \eqref{e:Holderproduct}, \eqref{e:vqaux},  \eqref{e:vq-vlCNbetter} and \eqref{e:RlCN}, that for $t \in \widehat{\mathcal{B}}_q^c$
\begin{align*}
&\norm{\nabla (p_q-p_\ell)(t)}_{N+\alpha}\\
&\lesssim \left(\norm{ \nabla v_\ell }_{N+\alpha} + \norm{ \nabla v_q }_{N+\alpha}\right)\norm{v_\ell-v_q}_\alpha + \left(\norm{ \nabla v_\ell }_{\alpha} + \norm{ \nabla v_q }_{\alpha}\right)\norm{v_\ell-v_q}_{N+\alpha} + \norm{\div R_\ell}_{N+\alpha} \\
&\lesssim \delta_{q}^{\sfrac{1}{2}} \lambda_{q} \tau_{q+1} \delta_{q+1} \ell^{-N-1} + \lambda_q^{-\gamma-3\alpha} \delta_{q+1} \ell^{-N-1-\alpha} \\
&\lesssim \lambda_q^{-\gamma} \delta_{q+1} \ell^{-N-1+ \alpha} \,,
\end{align*}
where we used that $\delta_{q-1}^{\sfrac{1}{2}} \lambda_{q-1} \leq \delta_q^{\sfrac{1}{2}}\lambda_q$ and \eqref{l:bound} in the last two inequalities. As for the material derivative, we have, using the equation for $v_\ell-v_q$ as in the proof of Proposition \ref{p:stabilityvi}, by \eqref{e:pq-plCNbetter}, \eqref{e:vqaux}, \eqref{e:vq-vlCNbetter} and \eqref{e:RlCN}
\begin{align*}
\norm{D_{t, \ell} (v_q-v_\ell)}_{N+\alpha} &\lesssim \norm{ \nabla(p_\ell-p_q)}_{N+\alpha} +\norm{\nabla v_q}_{\alpha} \norm{ v_\ell-v_q}_{N+\alpha} + \norm{\nabla v_q}_{N+\alpha} \norm{ v_\ell-v_q}_{\alpha}  + \norm{\div R_\ell}_{N+\alpha} \\
&\lesssim \lambda_q^{-\gamma} \delta_{q+1} \ell^{-N-1-\alpha} + \tau_{q+1} \delta_{q+1} \ell^{-N-1} \delta_{q-1}^{\sfrac{1}{2}} \lambda_{q-1}  +\lambda_q^{-\gamma-3\alpha} \delta_{q+1}  \ell^{-N-\alpha}
\end{align*}
which gives \eqref{e:vq-vlmatCNbetter} by observing $\delta_{q-1}^{\sfrac{1}{2}} \lambda_{q-1} \leq \delta_q^{\sfrac{1}{2}} \lambda_q$ and using \eqref{l:bound}.
\end{proof}

\subsection{Proof of the estimates \eqref{e:pglueing4}--\eqref{e:pglueing6} on $\overline v_q$ }
We show how the estimates \eqref{e:pglueing4}--\eqref{e:pglueing6}, when restricted to $J_q$, are an immediate consequence of Proposition \ref{p:stabilityvi} and \ref{p:betterestvq-vl}. By construction 
\begin{equation*}
\overline v_q - v_q = \sum_{i=0}^n \eta_g^i (v_i-v_q) = \sum_{i=0}^n \eta_g^i (v_i-v_\ell) - \sum_{i=0}^n \eta_g^i (v_q-v_\ell)\,.
\end{equation*}
Since $\supp \eta_g^i \subset \{t \in J_q: \lvert t- t_i \rvert \leq 2 \theta_{q+1} \}$, we can use \eqref{e:vi-vlCN} and \eqref{e:vl-vqC0} to estimate
\begin{align*}
\norm{ \overline v_q-v_q}_{0} \leq   \sup_{i=0, \dots, n} \norm{\eta_g^i(v_i-v_\ell)}_{\alpha} + \norm{ v_\ell- v_q}_0  &\lesssim \tau_{q+1}\delta_{q+1} \ell^{-1+\alpha} + \delta_q^{\sfrac{1}{2}} \lambda_q \ell \lesssim  \delta_{q+1}^{\sfrac{1}{2}}\lambda_q^{-{\sfrac{\gamma}{2}-\sfrac{3\alpha}{2}}} \,,
\end{align*}
which proves \eqref{e:pglueing4}. To prove \eqref{e:pglueing5}, we write 
\begin{equation*}
\overline v_q - v_\ell = \sum_{i=0}^n \eta_g^i (v_i-v_\ell) + (1-\eta_g) (v_q-v_\ell)\,.
\end{equation*}
Since $\supp(1- \eta_g) \subset  \widehat{\mathcal{B}}_q^c$ by construction, we can use \eqref{e:vq-vlCNbetter} together with \eqref{e:vi-vlCN} to estimate
\begin{align}\label{e:final}
\norm{\overline v_q- v_\ell}_{0} &\leq \sup_{i=0, \dots, n} \norm{ \eta_g^i (v_i-v_\ell)}_{\alpha} + \norm{(1-\eta_g)(v_q-v_\ell)}_{ \alpha} \lesssim \tau_{q+1} \delta_{q+1} \ell^{-1 +\alpha} \lesssim  \delta_q^{\sfrac{1}{2}} \lambda_q \ell
\end{align}
and
\begin{align*}
\norm{\overline v_q- v_\ell}_{N+1} &\leq \sup_{i=0, \dots, n} \norm{ \eta_g^i (v_i-v_\ell)}_{N+1+\alpha} + \norm{(1-\eta_g)(v_q-v_\ell)}_{N+1+ \alpha} \\
&\lesssim \tau_{q+1} \delta_{q+1} \ell^{-N-2 +\alpha} \lesssim \delta_q^{\sfrac{1}{2}} \lambda_q \ell^{-N} \quad \forall N \geq 0 \,,
\end{align*}
proving \eqref{e:pglueing5}. Finally, \eqref{e:pglueing6} follows immediately combining the former estimate with \eqref{e:vlCN}\,.

\subsection{Estimates on the vector potentials} To improve the estimates on the Reynolds stress $\overline R_q$, it is useful to consider the vector potentials associated to $v_i\,,$ $v_\ell$ and $v_q$ defined by
\begin{align*}
z_i &=\mathcal{B} v_i := (-\Delta)^{-1}  \curl v_i  \quad i=0, \dots, n  \,, \\
z_\ell &:= \mathcal{B} v_\ell \,, \\
z_q&:= \mathcal{B} v_q \,,
\end{align*}
where $\mathcal B$ is the Bio-Savart operator. By construction, 
$\div z_i= \div z_\ell= \div v_q=0$ and 
$$\curl z_i= (-\Delta)^{-1} \curl \curl v_i = v_i \quad \curl z_\ell = v_\ell \quad \curl z_q = v_q\,,$$
since $v_i\,,$ $v_\ell$ and $v_q$ are divergence free. Thus, we view $z_i-z_\ell$ (and $z_q - v_\ell$) as potential of first order of $v_i-v_\ell$ (and $v_q-v_\ell$) and as such, we expect the stability estimates $z_i- z_\ell$ (and $z_q-z_\ell$) to improve by a factor of $\ell$. We make this heuristic rigorous in the following 

\begin{proposition}\label{p:vectorpotentials} For $\lvert t-t_i \rvert \leq 2 \theta_{q+1}$ 
\begin{align}
\norm{(z_i - z_\ell)(t)}_{N+\alpha} &\lesssim \tau_{q+1} \delta_{q+1} \ell^{-N+\alpha} \quad &&\forall N \geq 0 \label{e:zi-zlCN}\\
\norm{D_{t, \ell}(z_i-z_\ell) (t)}_{N+ \alpha} &\lesssim \lambda_q^{-\gamma} \delta_{q+1} \ell^{-N+\alpha}  \quad &&\forall N \geq 0 \label{e:zi-zlmatCN}\,,
\end{align}
where $D_{t, \ell}$ denotes the material derivative as defined in \eqref{e:materialderivative}.
\end{proposition}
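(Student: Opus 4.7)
The plan is to mimic the structure of the proof of Proposition \ref{p:stabilityvi}: since $v_i(t_i) = v_\ell(t_i)$, the difference of potentials $z_i - z_\ell = \mathcal{B}(v_i - v_\ell)$ vanishes at the initial time $t_i$, so it suffices to control the material derivative $D_{t,\ell}(z_\ell - z_i)$ and close a Gr\"onwall argument via the transport estimate of Proposition \ref{p:transport}. The mechanism that produces the extra factor of $\ell$ compared with Proposition \ref{p:stabilityvi} is that $\mathcal{B} = (-\Delta)^{-1}\curl$ is a pseudo-differential operator of order $-1$, so the Schauder-type bound $\norm{\mathcal{B} f}_{N+1+\alpha} \lesssim \norm{f}_{N+\alpha}$ holds on mean-zero fields (and $v_\ell - v_i$ is mean-zero since both are mean-preserving).

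First I would apply $\mathcal{B}$ to \eqref{e:vl-vi}. The pressure term drops because $\mathcal{B}\nabla = (-\Delta)^{-1}\curl\nabla = 0$, and using the commutator identity $D_{t,\ell}\mathcal{B} = \mathcal{B} D_{t,\ell} + [v_\ell \cdot \nabla, \mathcal{B}]$ one obtains
\begin{equation*}
D_{t,\ell}(z_\ell - z_i) = [v_\ell \cdot \nabla, \mathcal{B}](v_\ell - v_i) - \mathcal{B}\bigl((v_\ell - v_i)\cdot \nabla v_i\bigr) + \mathcal{B}\div R_\ell .
\end{equation*}
Each of the three terms on the right should be bounded in $C^{N+\alpha}$ by $\lambda_q^{-\gamma}\delta_{q+1}\ell^{-N+\alpha}$, which would immediately yield \eqref{e:zi-zlmatCN}. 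For $\mathcal{B}\div R_\ell$, the operator $\mathcal{B}\div$ is a zeroth-order Calder\'on-Zygmund operator, so the bound reduces to \eqref{e:RlCN} combined with \eqref{l:bound}. For $\mathcal{B}\bigl((v_\ell - v_i)\cdot \nabla v_i\bigr)$, one uses $\div(v_\ell - v_i) = 0$ to rewrite the argument as $\div(v_i \otimes (v_\ell - v_i))$ and applies the Hölder product estimate together with \eqref{e:viCN} and \eqref{e:vi-vlCN}. Finally, since $\mathcal{B}$ commutes with $\nabla$, one has $[v_\ell \cdot \nabla, \mathcal{B}] = [v_\ell, \mathcal{B}]\cdot \nabla$, which is a commutator of the order $-1$ operator $\mathcal{B}$ against the smooth multiplier $v_\ell$; standard H\"older commutator bounds control this term by $\norm{v_\ell}_{N+\alpha}\norm{v_\ell - v_i}_\alpha + \norm{v_\ell}_{1+\alpha}\norm{v_\ell - v_i}_{N-1+\alpha}$, and plugging in \eqref{e:vlCN}, \eqref{e:vi-vlCN} together with \eqref{e:l} and \eqref{l:bound} yields the desired bound.

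Having \eqref{e:zi-zlmatCN} in hand, I would feed it into Proposition \ref{p:transport} with vanishing initial datum $(z_\ell - z_i)(t_i)=0$ on the short time window $\lvert t - t_i \rvert \leq 2\theta_{q+1}$. The Gr\"onwall exponent is controlled by $\theta_{q+1}\norm{v_\ell}_{1+\alpha} \lesssim \lambda_q^{-3\alpha}\ell^{-\alpha} \leq 1$, by \eqref{l:bound}, so the Gr\"onwall constant remains of order one. Integration in time produces the extra factor $\theta_{q+1}\lambda_q^{-\gamma} = \tau_{q+1}$ in the final bound, yielding \eqref{e:zi-zlCN}.

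The main technical obstacle is to verify that the commutator $[v_\ell \cdot \nabla, \mathcal{B}](v_\ell - v_i)$ really gains a full derivative off the smooth coefficient $v_\ell$: a naive product estimate treating $\mathcal{B}$ and $v_\ell \cdot \nabla$ as two separate zeroth-order objects would only produce a $C^{N+\alpha}$-bound of size $\delta_{q+1}\lambda_q^{-\gamma-3\alpha}\ell^{-N-1+\alpha}$, which is off from the desired $\lambda_q^{-\gamma}\delta_{q+1}\ell^{-N+\alpha}$ by precisely the factor of $\ell$ we are trying to produce. Capturing this improvement requires invoking the standard H\"older commutator estimates for Calder\'on-Zygmund operators paired with smooth multipliers, in the same spirit as those used in the gluing step of \cite{BDLSV2019}.
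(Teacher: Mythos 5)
Your high-level plan matches the paper's: apply the Biot--Savart operator $\mathcal{B}$ to \eqref{e:vl-vi}, use $\mathcal{B}\nabla=0$ to drop the pressure, bound the right-hand side, and close a Gr\"onwall from the vanishing initial datum $\tilde z_i(t_i)=0$. The pressure cancellation and the observation that $\mathcal{B}\div$ is a zeroth-order Calder\'on--Zygmund operator are correct, so $\mathcal{B}\div R_\ell$ is handled. The problem is with the other two terms, and the gap is genuine.

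Take the nonlinear stretching term $\mathcal{B}\bigl((v_\ell-v_i)\cdot\nabla v_i\bigr)$ and look at the $N=0$ estimate, which is the one the Gr\"onwall argument actually uses. Your rewrite $(v_\ell-v_i)\cdot\nabla v_i=\div\bigl(v_i\otimes(v_\ell-v_i)\bigr)$ combined with the $C^\alpha$-boundedness of $\mathcal{B}\div$ gives at best $\norm{v_i\otimes(v_\ell-v_i)}_\alpha\lesssim\norm{v_\ell-v_i}_\alpha\lesssim\tau_{q+1}\delta_{q+1}\ell^{-1+\alpha}$ by \eqref{e:vi-vlCN}. The target in \eqref{e:zi-zlmatCN} at $N=0$ is $\lambda_q^{-\gamma}\delta_{q+1}\ell^\alpha$, so you are off by the factor $\tau_{q+1}\ell^{-1}\lambda_q^\gamma=\lambda_q^{\gamma/2-3\alpha/2}\delta_{q+1}^{-1/2}$, which is large. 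In other words, rewriting as $\div(v_i\otimes(v_\ell-v_i))$ spends the derivative gain of $\mathcal{B}$ on $v_i$ rather than on $v_\ell-v_i$, and the factor of $\ell$ you hoped to produce never appears. The same obstruction hits the commutator term: your claimed bound $\norm{v_\ell}_{N+\alpha}\norm{v_\ell-v_i}_\alpha+\norm{v_\ell}_{1+\alpha}\norm{v_\ell-v_i}_{N-1+\alpha}$ is not even defined at $N=0$; even a generous commutator estimate of the form $\norm{[v_\ell\cdot\nabla,\mathcal{B}]f}_\alpha\lesssim\norm{\nabla v_\ell}_\alpha\norm{f}_\alpha$ (which is what the kernel computation $\int (v_\ell(x)-v_\ell(y))\cdot\nabla K(x-y)f(y)\,dy$ gives) yields $\lambda_q^{-\gamma-3\alpha}\delta_{q+1}\ell^{-1}$, again $\ell^{-1}$ too large. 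Proposition~\ref{p:commutator} does not help either: it is stated for zeroth-order CZ operators, whereas $\mathcal{B}$ has order $-1$, and even if it did apply it would give the same insufficient power of $\ell$.

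The paper sidesteps exactly this difficulty by never expanding $v_\ell-v_i=\curl\tilde z_i$ back out in the terms that feed the Gr\"onwall. It applies $\curl$ to \eqref{e:vl-vi}, obtains an equation for $(-\Delta)D_{t,\ell}\tilde z_i$, and then uses the vector-calculus identities $[(v_\ell\cdot\nabla)\curl\tilde z_i]^j=[\curl((v_\ell\cdot\nabla)\tilde z_i)]^j+\partial_k([\tilde z_i\times\nabla v_\ell^k]^j)$ and $[(\curl\tilde z_i\cdot\nabla)v_i]^j=\partial_k([\tilde z_i\times\nabla v_i^j]^k)$ (both relying on $\div\tilde z_i=\div v_\ell=0$) to shift the $\curl$ \emph{off} $\tilde z_i$ and onto $v_\ell$ or $v_i$. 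The effect is that the resulting source $F$ in \eqref{def:F} contains $\tilde z_i$ undifferentiated in every term, so after Schauder the estimate closes as a Gr\"onwall in $\norm{\tilde z_i}_\alpha$ rather than in $\norm{v_\ell-v_i}_\alpha$, which is precisely one power of $\ell$ better. To repair your argument you would need to replicate this: after taking $\mathcal{B}$ of \eqref{e:vl-vi}, substitute $v_\ell-v_i=\curl\tilde z_i$ in the right-hand side \emph{before} estimating, and integrate by parts so that only $\tilde z_i$, not its curl, appears. (For $N\geq1$ the situation is easier and your route can be made to work using $\norm{\mathcal{B}g}_{N+\alpha}\lesssim\norm{g}_{N-1+\alpha}$, which is how the paper itself handles $N\geq1$ via the bounded operator $\nabla\mathcal{B}$; it is $N=0$ that requires the structural rewrite.)
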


The proof of Proposition \ref{p:vectorpotentials} follows closely \cite{BDLSV2019}. The next proposition on the other hand, should be seen as the analogue of Proposition \ref{p:betterestvq-vl} and exploits crucially that $v_q$ is an exact solution of Euler on $\T^3 \times \widehat{\mathcal{B}}_q^c$ with better estimates.

\begin{proposition}\label{p:vectorpotentials2} For $t \in \widehat{ \mathcal{B}}_q^c$ we have
\begin{align}
\norm{(z_q-z_\ell)(t)}_{N+ \alpha} &\lesssim \tau_{q+1} \delta_{q+1} \ell^{-N+ \alpha} \quad &&\forall N \geq 0, \label{e:zq-zlCN}\\
\norm{ D_{t, \ell} (z_q- z_\ell)(t)}_{N+\alpha} &\lesssim \lambda_q^{-\gamma} \delta_{q+1} \ell^{-N+\alpha} \quad &&\forall N \geq 0 \label{e:zq-zlmatCN}\,.
\end{align}
where $D_{t, \ell}$ denotes the material derivative as defined in \eqref{e:materialderivative}.
\end{proposition}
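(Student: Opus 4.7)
The proof plan is to mirror the argument for Proposition \ref{p:vectorpotentials}, replacing the stability bounds \eqref{e:vi-vlCN}--\eqref{e:vi-vlmatCN} for $v_i-v_\ell$ by the improved ones \eqref{e:vq-vlCNbetter}--\eqref{e:vq-vlmatCNbetter} for $v_q-v_\ell$ on $\widehat{\mathcal{B}}_q^c$ from Proposition \ref{p:betterestvq-vl}, and the standard bounds on $v_i$ by the a priori estimate \eqref{v_q:betterestimate} on $v_q$ guaranteed by assumption \ref{Bq:vii}. The key feature of the Biot-Savart operator $\mathcal{B}=(-\Delta)^{-1}\curl$ is its gain of exactly one spatial derivative in the H\"older scale; this is what produces the extra factor of $\ell$ in \eqref{e:zq-zlCN}--\eqref{e:zq-zlmatCN} compared to the corresponding estimates on $v_q-v_\ell$.

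For \eqref{e:zq-zlCN}, I would write $z_q-z_\ell=\mathcal{B}(v_q-v_\ell)$ and invoke the Schauder gain $\|\mathcal{B} f\|_{N+\alpha}\lesssim \|f\|_{N-1+\alpha}$ for zero-mean $f$ combined with \eqref{e:vq-vlCNbetter} to conclude the estimate for $N\geq 1$. For $N=0$, use instead that $\mathcal{B}$ commutes with spatial convolution (being a Fourier multiplier) so that $z_\ell=z_q\ast\varphi_\ell$, and then apply standard mollification estimates on $z_q$ combined with the improved bound \eqref{v_q:betterestimate} (again via the one-derivative gain of $\mathcal{B}$).

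For \eqref{e:zq-zlmatCN}, the main step is to derive a transport-type equation for $z_q-z_\ell$. Since $R_q\equiv 0$ on $\widehat{\mathcal{B}}_q^c$ by property \ref{Bq:vi}, $v_q$ is an exact Euler solution there; applying $\mathcal{B}$ to both the Euler equation for $v_q$ and the Euler-Reynolds equation for $v_\ell$, and exploiting $\mathcal{B}\nabla=(-\Delta)^{-1}\curl\nabla=0$ to eliminate the pressure, one obtains $\partial_t z_q=-\mathcal{B}\div(v_q\otimes v_q)$ and $\partial_t z_\ell=-\mathcal{B}\div(v_\ell\otimes v_\ell)+\mathcal{B}\div R_\ell$. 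Subtracting, adding $(v_\ell\cdot\nabla)(z_q-z_\ell)$ to form the material derivative, and expanding $\div(v_q\otimes v_q-v_\ell\otimes v_\ell)=((v_q-v_\ell)\cdot\nabla)v_q+(v_\ell\cdot\nabla)(v_q-v_\ell)$ via the divergence-free constraint, one arrives at
\begin{equation*}
D_{t,\ell}(z_q-z_\ell)=[(v_\ell\cdot\nabla),\mathcal{B}](v_q-v_\ell)-\mathcal{B}\bigl((v_q-v_\ell)\cdot\nabla v_q\bigr)-\mathcal{B}\div R_\ell.
\end{equation*}
The third term is bounded by \eqref{e:RlCN} since $\mathcal{B}\div$ is a Fourier multiplier of order zero; the second term is rewritten as $\mathcal{B}\div(v_q\otimes(v_q-v_\ell))$ (using $\div(v_q-v_\ell)=0$) and controlled by the H\"older product inequality \eqref{e:Holderproduct} combined with \eqref{v_q:betterestimate} and \eqref{e:vq-vlCNbetter}.

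The main obstacle is the commutator $[(v_\ell\cdot\nabla),\mathcal{B}](v_q-v_\ell)$: a naive expansion producing $\|v_\ell\|_{1+\alpha}\|v_q-v_\ell\|_{N+\alpha}$ falls short by a factor $\ell^{-1-\alpha}$ that cannot be absorbed into $\lambda_q^{-\gamma}$ given the constraint \eqref{gamma:bound2}. The correct tool, which is the one exploited in the analogue argument for $v_i-v_\ell$ in the proof of Proposition \ref{p:vectorpotentials}, is the Coifman--Meyer-type estimate
\begin{equation*}
\|[(v_\ell\cdot\nabla),\mathcal{B}]f\|_{N+\alpha}\lesssim \|v_\ell\|_{N+\alpha}\|f\|_{\alpha}+\|v_\ell\|_{1+\alpha}\|f\|_{N-1+\alpha},
\end{equation*}
reflecting the fact that the commutator of a first-order transport operator with a Fourier multiplier of order $-1$ is itself of order $-1$ and thus gains one derivative on its argument. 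Plugging \eqref{e:vlCN} and \eqref{e:vq-vlCNbetter} into this bound, using the definition of $\ell$ in \eqref{e:l} together with the elementary bounds \eqref{l:bound} to dispose of the various $\alpha$-losses, closes the estimate at the target $\lambda_q^{-\gamma}\delta_{q+1}\ell^{-N+\alpha}$.
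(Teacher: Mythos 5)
Your derivation of the transport equation for $z_q-z_\ell$ and the observation that $\mathcal{B}$ commutes with convolution (for \eqref{e:zq-zlCN}) are both correct and essentially match the paper's strategy; so does the use of $\nabla\mathcal{B}$-boundedness for $N\geq 1$ and the improved mollification bound \eqref{improved_moll} for $N=0$. However, there is a genuine gap in your treatment of the self-advection term $\mathcal{B}\big((v_q-v_\ell)\cdot\nabla v_q\big)$. Rewriting it as $\mathcal{B}\div(v_q\otimes(v_q-v_\ell))$ and applying the product rule, the dominant contribution is
\begin{equation*}
\norm{v_q}_0\,\norm{v_q-v_\ell}_{N+\alpha}\;\lesssim\;\tau_{q+1}\,\delta_{q+1}\,\ell^{-N-1+\alpha}
\;=\;\lambda_q^{-\gamma}\,\delta_{q+1}\,\ell^{-N+\alpha}\cdot\frac{\theta_{q+1}}{\ell}\,,
\end{equation*}
and the quotient $\theta_{q+1}/\ell$ diverges: using the definitions of $\theta_{q+1}$ and $\ell$ in Section \ref{s:inductive_prop} and \eqref{e:l}, one finds $\theta_{q+1}/\ell\simeq \lambda_{q+1}^{\beta}\lambda_q^{\gamma/2-3\alpha/2}\gg 1$. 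Thus the proposed bound overshoots the target $\lambda_q^{-\gamma}\delta_{q+1}\ell^{-N+\alpha}$ by an unbounded factor, and the argument does not close. The issue is that $\mathcal{B}\div$ is only of order zero, so no extra factor of $\ell$ appears; what is needed is a way to replace $v_q-v_\ell=\curl(z_q-z_\ell)$ by the vector potential $z_q-z_\ell$ itself, which is exactly one factor of $\ell$ smaller.

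The missing ingredient is the algebraic rewriting used in the proof of Proposition \ref{p:vectorpotentials}: for divergence-free $\tilde z:=z_\ell-z_q$,
\begin{equation*}
\big[(\curl\tilde z\cdot\nabla)\,v\big]^j \;=\; \partial_k\big([\tilde z\times\nabla v^j]^k\big)\,,
\end{equation*}
since the term $\div\curl(\tilde z\, v^j)$ vanishes. After taking $\curl$ of the equation for $\partial_t\tilde z+(v_\ell\cdot\nabla)\tilde z$ and applying $(-\Delta)^{-1}$, each nonlinear contribution appears with $\tilde z$ (rather than $\curl\tilde z$) on the outside, so the product estimate gives $\norm{\tilde z}_{N+\alpha}\norm{v_q}_{1+\alpha}\lesssim\tau_{q+1}\delta_{q+1}\ell^{-N+\alpha}\cdot\delta_q^{1/2}\lambda_q\ell^{-\alpha}=\lambda_q^{-\gamma-3\alpha}\delta_{q+1}\ell^{-N}$, which is within the target after absorbing $\lambda_q^{-3\alpha}\leq\ell^{2\alpha}$ from \eqref{l:bound}. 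The commutator bound you posit for $[(v_\ell\cdot\nabla),\mathcal{B}]$ is plausible and would suffice for the transport contribution, but the paper sidesteps the need to establish it as a separate lemma by handling the transport piece through the same Levi--Civita identity. In short: either prove and use the order-$(-1)$ commutator lemma \emph{and} apply the Levi--Civita rewriting to the self-advection term, or simply adopt the paper's curl-identity decomposition for both.
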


\begin{proof}[Proof of Proposition \ref{p:vectorpotentials}] We set $\tilde z_i := z_\ell-z_i$. Recall that $(v_\ell-v_i)$ solves \eqref{e:vl-vi}, so that 
\begin{equation}
\partial_t \curl \tilde z_i + (v_\ell \cdot \nabla) \curl \tilde z_i = - \nabla (p_\ell-p_i) - ( \curl \tilde z_i \cdot \nabla) v_i + \div  R_\ell \,.
\end{equation}
We rewrite, using $\div \tilde z_i= \div v_\ell=0$, 
\begin{align*}
[(v_\ell \cdot \nabla ) \curl \tilde z_i]^j &= \partial_k\left( v_\ell^k \left [\curl \tilde z_i\right]^j \right) = [\curl((v_\ell \cdot \nabla) \tilde z_i )]^j   + \partial_k \left( [\tilde z_i \times \nabla  v_\ell^k]^j\right),  \\
[( \curl \tilde z_i \cdot \nabla) v_i]^j &=\partial_k \left(\left[ \curl \tilde z_i\right]^k v_i^j\right) = \div \curl \left(\tilde z_i v_i^j\right) + \partial_k\left( [ \tilde z_i \times \nabla v_i^j ]^k\right) = \partial_k\left( [ \tilde z_i \times \nabla v_i^j ]^k\right),
\end{align*} 
where we used the convention to sum over repeated indices. Setting 
\begin{equation*}
[(z \times \nabla) v]^{jk}= [z \times \nabla v^k ]^j = \epsilon_{jlm} z^l \partial_m v^k\,,
\end{equation*}
where $ \epsilon_{jlm}$ denotes the Levi-Civita symbol, we obtain that 
\begin{equation*}
\curl (\partial_t \tilde z_i + (v_\ell \cdot \nabla) \tilde z_i) = -\div \left((\tilde z_i \times \nabla ) v_\ell +[(\tilde z_i \times \nabla) v_i]^T \right) - \nabla (p_\ell - p_i) + \div R_\ell \,.
\end{equation*}
Taking the $\curl$ of the above equation and recalling that $(- \Delta)= \curl \curl + \nabla \div$, we find 
\begin{equation*}\label{e:tildezi}
(-\Delta)(\partial_t \tilde z_i  + (v_\ell \cdot \nabla) \tilde z_i) = F \,,
\end{equation*}
where 
\begin{equation}\label{def:F}
F= - \nabla \div((\tilde z_i \cdot \nabla) v_\ell)- \curl \div\left((\tilde z_i \times \nabla) v_\ell + [(\tilde z_i \times \nabla) v_i]^T\right) + \curl \div R_\ell \,.
\end{equation}
We deduce from Proposition \ref{p:CZO_C_alpha} and \eqref{e:Holderproduct} that 
\begin{align}
\norm{ D_{t, \ell} \tilde z_i}_{N+\alpha} &\lesssim \norm{ (\tilde z_i \cdot \nabla ) v_\ell}_{N+\alpha} + \norm{( \tilde z_i \times \nabla) v_\ell}_{N+\alpha}+ \norm{ \left[(\tilde z_i \times \nabla) v_i\right]^T}_{N+\alpha} + \norm{R_\ell}_{N+\alpha} \nonumber \\
&\leq (\norm{ v_i}_{N+1 + \alpha} + \norm{v_\ell}_{N+1+\alpha}) \norm{\tilde z_i}_\alpha +  (\norm{ v_i}_{1 + \alpha} + \norm{v_\ell}_{1+\alpha}) \norm{\tilde z_i}_{N+\alpha}+ \norm{R_\ell}_{N+\alpha} \nonumber \\
&\lesssim \delta_q^{\sfrac{1}{2}} \lambda_q \ell^{-N-\alpha} \norm{\tilde z_i}_\alpha +  \delta_q^{\sfrac{1}{2}} \lambda_q \ell^{-\alpha} \norm{\tilde z_i}_{N+\alpha} +\lambda_q^{-\gamma - 3\alpha} \delta_{q+1}  \ell^{-N-\alpha} \,, \label{e:vectorpotentialaux1}
\end{align}
where the last inequality is a consequence of \eqref{e:vlCN}, \eqref{e:viCN} and \eqref{e:RlCN}. In particular for $N=0$, we deduce from Proposition \ref{p:transport} that, since $\tilde z_i(t_i)=0$,
\begin{equation*}
\norm{ \tilde z_i (t)}_\alpha \lesssim \int_{t_i}^t \left(\delta_q^{\sfrac{1}{2}} \lambda_q \ell^{-\alpha} \norm{\tilde z_i (\tau)}_\alpha +\lambda_q^{-\gamma - 3\alpha} \delta_{q+1}  \ell^{-\alpha} \right) \, d \tau, \quad \text{if } \lvert t-t_i \rvert \leq 2 \theta_{q+1}\,.
\end{equation*}
Since $\delta_q^{\sfrac{1}{2}} \lambda_q \ell^{\alpha} \theta_{q+1} \leq \ell^\alpha \leq 1$ and that $\lambda_q^{-3\alpha} \ell^{-\alpha} \leq \ell^{\alpha}$ by \eqref{l:bound}, by Gr\"onwall's inequality we get 
\begin{equation*}
\norm{ \tilde z_i (t)}_\alpha \lesssim \theta_{q+1} \lambda_q^{-\gamma} \delta_{q+1} \ell^\alpha = \tau_{q+1} \delta_{q+1} \ell^\alpha,  \quad \text{if } \lvert t-t_i \rvert \leq 2 \theta_{q+1} \, .
\end{equation*}
Inserting this bound back in \eqref{e:vectorpotentialaux1}, we also obtain \eqref{e:zi-zlmatCN} for $N=0\,.$ As for the higher derivatives, we simply observe that the operator $\nabla \mathcal{B}$ is bounded on H\"older spaces by Proposition \ref{p:CZO_C_alpha} and hence for $N \geq 1\,,$ we deduce from \eqref{e:vi-vlCN}
\begin{equation*}
\norm{ \tilde z_i}_{N+ \alpha} = \norm{ \nabla \tilde z_i}_{N-1+\alpha} = \norm{ \nabla \mathcal{B} (v_i-v_\ell)}_{N-1-\alpha} \lesssim \tau_{q+1} \delta_{q+1} \ell^{-N+\alpha} \,.
\end{equation*}
The estimate \eqref{e:zi-zlmatCN} for $N\geq 1$ is obtained by writing $\partial^\theta D_{t, \ell} \tilde z_i =  D_{t, \ell} \partial^\theta \tilde z_i +  [\partial^\theta, (v_\ell \cdot \nabla) ] \tilde z_i$ for a derivative $\partial^\theta$ with $\lvert\theta \rvert =N$, estimating separately the resulting commutator as in the proof of Proposition \ref{p:stabilityvi}.
\end{proof}

\begin{proof}[Proof of Proposition \ref{p:vectorpotentials2}] Observe that the operator $\mathcal{B}$ commutes with convolution, hence $z_\ell = z_q \ast \varphi_\ell$. Moreover, as a consequence of assumption \ref{Bq:vii} and the fact that $\ell_{q} \leq \ell_{q-1}$, we have on $\widehat{\mathcal{B}}_q^c$ (for $\alpha$ small enough) the better estimates \eqref{e:glueingbetterest}--\eqref{e:vqaux}.
Since $(-\Delta) z_q = \curl v_q$, standard estimates for the Laplace equation give $\norm{z_q}_2\leq \|z_q\|_{2+\alpha} \lesssim \norm{v_q}_{1+\alpha}$, which together with \eqref{improved_moll}, implies
\begin{equation*}
\norm{ (z_q-z_\ell)(t)}_\alpha \lesssim  \ell^{2-\alpha} \norm{ z_q(t)}_{2} \lesssim \ell^{2-\alpha} \delta_{q-1}^{\sfrac{1}{2}} \lambda_{q-1}\ell^{-\alpha} =  \tau_{q+1} \delta_{q+1} \ell^{-2\alpha} \frac{\delta_{q-1}^{\sfrac{1}{2}}\lambda_{q-1}}{\delta_q^{\sfrac{1}{2}} \lambda_q} \lesssim \tau_{q+1} \delta_{q+1} \ell^\alpha  \quad t \in\widehat{\mathcal{B}}_q^c \,,
\end{equation*}
where in the last inequality we reused \eqref{e:vqaux2}.
As for derivatives of higher order, we recall that $\nabla \mathcal{B}$ is bounded on H\"older spaces by Proposition \ref{p:CZO_C_alpha}. We then estimate using \eqref{e:vqaux} for $t \in \widehat{\mathcal{B}}_q^c$
\begin{align*}
\norm{(z_\ell- z_q)(t)}_{N+1+\alpha} &= \norm{\nabla \mathcal{B} (v_\ell- v_q)(t)}_{N+\alpha} \lesssim \norm{(v_\ell- v_q)(t)}_{N+\alpha}  \lesssim \tau_{q+1} \delta_{q+1}  \ell^{-N-1+\alpha} \,,
\end{align*}
where the last inequality uses again $\alpha$ small enough as in  \eqref{e:vqaux2}.
As for the material derivative, we observe that by assumption \ref{Bq:vi}, $v_q$ is a smooth solution of Euler on $ \T^3 \times \widehat{\mathcal{B}}_q^c\,.$ Hence we can argue as in the proof of Proposition \ref{p:vectorpotentials} to obtain, for $t \in\widehat{\mathcal{B}}_q^c$
\begin{align*}
&\norm{ D_{t, \ell} (z_\ell- z_q)}_{N+\alpha} \\
&\lesssim \left(\norm{ v_\ell }_{N+1+\alpha}+ \norm{ v_q}_{N+1+\alpha}\right)\norm{z_\ell- z_q}_{\alpha} + \left(\norm{ v_\ell }_{1+\alpha}+ \norm{ v_q}_{1+\alpha}\right)\norm{z_\ell- z_q}_{N+\alpha} + \norm{R_\ell}_{N+\alpha} \\
&\lesssim  \lambda_q^{-\gamma-3\alpha} \delta_{q+1} \ell^{-N-\alpha},
\end{align*}
where the last inequality follows from combining the estimates \eqref{e:glueingbetterest}, \eqref{e:vlCN}, \eqref{e:zq-zlCN} and \eqref{e:RlCN}. We conclude \eqref{e:zq-zlmatCN} recalling that $\lambda_q^{-3\alpha} \leq \ell^{2\alpha}$ by \eqref{l:bound}.
\end{proof}

\subsection{Proof of the estimates \eqref{e:pglueing7}--\eqref{e:pglueing8} on $\overline R_q$} Since $\supp \overline R_q \subset \T^3 \times \bigcup_{i=0}^{n+1} I_i$ by construction, it suffices to prove both estimates on every interval $I_i$. As in \cite{BDLSV2019}, we will repeatedly use that $\mathcal{R} \curl$ is a bounded operator on H\"older spaces by Proposition \ref{p:CZO_C_alpha} and thereby we improve the estimates on  terms of the form $\mathcal{R}(v_i-v_{i-1}) = \mathcal{R} \curl(z_i-z_{i-1})$  by passing to the vector potentials.

Consider now first the case $i \in \{1, \dots, n \}\,.$  Recall from \eqref{def:barRq} 
\begin{equation*}
\overline R_q = \partial_t \eta_g^i  \mathcal{R}(v_i-v_{i-1}) - \eta_g^i (1-\eta_g^i) ((v_i-v_{i-1}) \otimes (v_i-v_{i-1})) \quad \text{ on } I_i \,.
\end{equation*}
Recall from Lemma \ref{l:cutoffglueing} that $\norm{\partial^N_t \eta_g^i}_0 \lesssim \tau_{q+1}^{-N}$ and that $\supp \eta_g^i \subset  \{ t : \lvert t-t_i \rvert \leq 2 \theta_{q+1} \}\,.$ Therefore, we bound, using also \eqref{e:Holderproduct}, \eqref{e:vi-vlCN} and \eqref{e:zi-zlCN},
\begin{align}
\norm{ \overline R_q (t)}_{N+\alpha} &\lesssim \tau_{q+1}^{-1} \norm{ \mathcal{R} \curl (z_i-z_{i-1})}_{N+\alpha} + \norm{ v_i-v_{i-1}}_{N+\alpha} \norm{v_i-v_{i-1}}_\alpha \nonumber \\
&\lesssim \tau_{q+1}^{-1} \norm{z_i-z_{i-1}}_{N+\alpha} + \tau_{q+1}^2 \delta_{q+1}^2 \ell^{-N-2 + 2\alpha} \nonumber \\
&\lesssim \delta_{q+1} \ell^{-N+\alpha} \label{e:final3}\,,
\end{align}
which gives \eqref{e:pglueing7} on $I_i\,.$ As for estimate \eqref{e:pglueing8},  we begin by writing
\begin{align}\label{e:final2}
\norm{ (\partial_t + \overline v_q \cdot \nabla)\overline{R}_q}_{N+\alpha} \leq \norm{ (v_\ell- \overline v_q) \cdot \nabla  \overline R_q}_{N+\alpha} + \norm{ D_{t, \ell} \overline R_q}_{N+\alpha}
\end{align}
where $D_{t, \ell}$ is defined in \eqref{e:materialderivative}. Using \eqref{e:pglueing4}, \eqref{e:final} and \eqref{e:final3}, we estimate the first term on $I_i$
\begin{equation}\label{e:final4}
\norm{(v_\ell-\overline v_q)\cdot \nabla\overline R_q}_{N+\alpha} \lesssim \norm{ v_\ell - \overline v_q}_{N+\alpha} \norm{ \nabla \overline R_q}_0 + \norm{v_\ell- \overline v_q}_0 \norm{\nabla \overline R_q}_{N+\alpha} \lesssim \delta_{q+1}  \delta_q^{\sfrac{1}{2}} \lambda_q \ell^{-N} \,,
\end{equation}
which is better than the desired bound in \eqref{e:pglueing8}. We are left to bound $\norm{D_{t, \ell} \overline R_q}_{N+\alpha}$ on $I_i\,.$ We compute (always on $I_i$)
\begin{align*}
D_{t, \ell} \overline R_q &= \partial_{tt} \eta_g^i \mathcal{R}(v_i-v_{i-1}) + \partial_t \eta_g^i \Big(\partial_t \mathcal{R}(v_i-v_{i-1}) +( v_\ell \cdot \nabla) \mathcal{R}(v_i-v_{i-1})\Big) \\
&- \partial_t \big( \eta_g^i (1-\eta_g^i) \big) \big((v_i-v_{i-1}) \otimes (v_i-v_{i-1}) \big)\\
& - \eta_g^i (1-\eta_g^i) \Big( \big(D_{t, \ell} (v_i-v_{i-1}) \big) \otimes (v_i-v_{i-1})+ (v_i-v_{i-1}) \otimes \big(D_{t, \ell} (v_i-v_{i-1}) \big)\Big)\,.
\end{align*}
We rewrite
\begin{equation*}
\partial_t \eta_g^i \Big(\partial_t \mathcal{R}(v_i-v_{i-1}) +( v_\ell \cdot \nabla) \mathcal{R}(v_i-v_{i-1})\Big)  = \partial_t \eta_g^i \Big((\mathcal{R}\curl) D_{t, \ell} (z_i-z_{i-1}) + [(v_\ell \cdot \nabla), \mathcal{R}\curl ] (z_i-z_{i-1})\Big) \,,
\end{equation*}
where  $[(v_\ell \cdot \nabla), \mathcal{R}\curl ]$ denotes the commutator involving the singular integral operator $\mathcal{R} \curl$. From Proposition \ref{p:commutator}, \eqref{e:vlCN} and \eqref{e:zi-zlCN}, we have 
\begin{align}
\norm{[(v_\ell \cdot \nabla), \mathcal{R}\curl ] (z_i-z_{i-1})}_{N+\alpha} &\lesssim \norm{v_\ell}_{1+\alpha} \norm{ z_i-z_{i-1}}_{N+\alpha}+ \norm{ v_\ell}_{N+1+\alpha} \norm{ z_i-z_{i-1}}_\alpha \nonumber\\
&\lesssim \tau_{q+1} \delta_{q+1} \delta_q^{\sfrac{1}{2}} \lambda_q \ell^{-N}  \,. \label{e:barRqaux}
\end{align}
We can thus estimate (always on $I_i$) using \eqref{e:Holderproduct} on products together with \eqref{e:zi-zlCN}, \eqref{e:zi-zlmatCN}, \eqref{e:barRqaux}, \eqref{e:vi-vlCN} and \eqref{e:vi-vlmatCN}
\begin{align*}
\norm{D_{t, \ell} \overline R_q}_{N+\alpha} &\lesssim \tau_{q+1}^{-2} \norm{z_i-z_{i-1}}_{N+\alpha}  + \tau_{q+1}^{-1}\big( \norm{D_{t, \ell} (z_i-z_{i-1})}_{N+ \alpha} + \norm{[(v_\ell \cdot \nabla), \mathcal{R}\curl ] (z_i-z_{i-1})}_{N+\alpha}\big) \\
&+ \tau_{q+1}^{-1} \norm{ v_i-v_{i-1}}_{N+\alpha} \norm{v_i-v_{i-1}}_\alpha \\
&+ \norm{D_{t, \ell}(v_i-v_{i-1})}_{N+\alpha} \norm{v_i-v_{i-1}}_\alpha +\norm{D_{t, \ell}(v_i-v_{i-1})}_{\alpha} \norm{v_i-v_{i-1}}_{N+\alpha} \\
&\lesssim \tau_{q+1}^{-1} \delta_{q+1} \ell^{-N+\alpha} + \delta_{q+1}\delta_q^{\sfrac{1}{2}} \lambda_q \ell^{-N} + \tau_{q+1} \delta_{q+1}^2 \ell^{-N-2+2\alpha} \\
&\lesssim \tau_{q+1}^{-1} \delta_{q+1} \ell^{-N+\alpha} \,,
\end{align*}
where we used in the last inequality that $\tau_{q+1} \delta_{q}^{\sfrac{1}{2}} \lambda_q \leq \lambda_q^{-3\alpha} \leq \ell^\alpha$ by \eqref{l:bound} and  $\left(\tau_{q+1} \ell^{-1}\right)^2 \leq 1\,.$ This proves \eqref{e:pglueing8} on $I_i$ recalling \eqref{l:bound}.

Finally, let us prove the estimates \eqref{e:pglueing7}--\eqref{e:pglueing8} on $I_0$ and $I_{n+1}\,.$ Recall from \eqref{def:barRq} 
\begin{equation*}
\overline R_q = \partial_t \eta_g^0 \mathcal{R}(v_0-v_q) - \eta_g^0 (1-\eta_g^0) ((v_0-v_q) \otimes (v_0-v_q)) \quad \text{ on } I_0 \,.
\end{equation*}
Arguing as before and writing $z_0-z_q=(z_0- z_\ell)+ (z_\ell-z_q)$ and $v_0-v_q= v_0-v_\ell+ v_\ell-v_q$, we have for $t \in I_0 \subset \widehat{\mathcal{B}}_q^c$ using \eqref{e:zi-zlCN}, \eqref{e:zq-zlCN}, \eqref{e:vi-vlCN} and \eqref{e:vq-vlCNbetter}
\begin{align*}
\norm{ \overline R_q (t)}_{N+\alpha} &\lesssim \tau_{q+1}^{-1} \norm{( z_0- z_q)(t)}_{N+\alpha} + \norm{(v_0- v_q)(t)}_{N+\alpha} \norm{(v_0-v_q)(t)}_\alpha \\
&\lesssim \delta_{q+1} \ell^{-N+\alpha}+  \tau_{q+1}^2 \delta_{q+1}^2 \ell^{-N-2+2\alpha} \\
&\lesssim \delta_{q+1} \ell^{-N+\alpha} \,.
\end{align*}
As for estimate \eqref{e:pglueing8}, we argue as in \eqref{e:final2} and \eqref{e:final4} to reduce ourselves to bound $\norm{D_{t, \ell} \overline{R}_q}_{N+\alpha}.$ Proceeding as before, we obtain for $t \in I_0\subset \widehat{\mathcal{B}}_q^c$ that 
\begin{align*}
\norm{D_{t, \ell} \overline R_q (t) }_{N+\alpha} &\lesssim \tau_{q+1}^{-2} \norm{z_0-z_q}_{N+\alpha}  + \tau_{q+1}^{-1}\big( \norm{D_{t, \ell} (z_0-z_q)}_{N+ \alpha} + \norm{[(v_\ell \cdot \nabla), \mathcal{R}\curl ] (z_0-z_{q})}_{N+\alpha}\big) \\
&+ \tau_{q+1}^{-1} \norm{ v_0-v_{q}}_{N+\alpha} \norm{v_0-v_{q}}_\alpha \\
&+ \norm{D_{t, \ell}(v_0-v_{q})}_{N+\alpha} \norm{v_0-v_{q}}_\alpha +\norm{D_{t, \ell}(v_0-v_{q})}_{\alpha} \norm{v_0-v_{q}}_{N+\alpha} \\
&\lesssim \tau_{q+1}^{-1} \delta_{q+1} \ell^{-N+ \alpha} + \delta_{q+1} \delta_q^{\sfrac{1}{2}} \lambda_q \ell^{-N} +\tau_{q+1} \delta_{q+1}^2 \ell^{-N-2+2\alpha}\,,
\end{align*}
where we used Proposition \ref{p:commutator} to estimate the commutator as well as the estimates \eqref{e:zq-zlCN}, \eqref{e:zq-zlmatCN}, \eqref{e:vlCN}, \eqref{e:vq-vlCNbetter} and \eqref{e:vq-vlmatCNbetter}. We conclude \eqref{e:pglueing8} on $I_0\,.$ The estimates on $I_{n+1}$ follow in the same way up to exchanging the role $(v_0, \eta_g^0)$ with $(v_n, \eta_g^n)\,.$ This concludes the proof of Proposition \ref{p:glueing}.

\section{Perturbation}\label{s:perturbation}
In this section we will construct the perturbation $w_{q+1}$ and consequently define
\begin{equation}\label{e:final5}
v_{q+1}:=\overline v_q +w_{q+1}\,,
\end{equation}
where $(\overline v_q, \overline R_q)$ is a smooth solution of \eqref{ER} as given by Proposition \ref{p:glueing}. Following the construction of \cite{BDLSV2019}, the perturbation will be highly oscillatory and it will be based on the Mikado flows. As for the gluing step, also here there will be some changes with respect to \cite{BDLSV2019}. For instance, the fact that we are not interested in prescribing an energy profile, allows us to simplify the choice of the amplitude of the perturbation. For this reason, we will give a complete proof of all the estimates.

Let now $\mathcal{B}_{q+1} \subset [0,T]$ be the bad set belonging to $(\overline{v}_q, \overline R_q)$ (see  Proposition \ref{p:glueing}). Note in particular that by Proposition \ref{p:glueing}, $\mathcal{B}_{q+1}$ already satisfies the size properties \ref{Bq:i}--\ref{Bq:iv} at step $q+1$ and we will leave the bad set $\mathcal{B}_{q+1}$ unchanged. Thus, to prove Proposition \ref{p:perturbation}, we are left only to check the two estimates \eqref{v_q+1-bar v_q} and \eqref{R_q+1} as well as the properties \ref{Bq:v}--\ref{Bq:vii} (with $q$ replaced by $q+1$). Since by Proposition \ref{p:glueing}, the couple $(\bar v_q, \overline{R}_q)$ already satisfies the more restrictive properties \eqref{e:pglueing1}, \eqref{e:pglueing2} and \eqref{e:pglueing6}, the properties \ref{Bq:iv}--\ref{Bq:vii} can be achieved by ensuring that the temporal support of $w_{q+1}$ is contained in a $\tau_{q+1}$ neighbourhood of the time support of $\overline R_q$. In particular, this will ensure that $\supp R_{q+1}\subset \T^3\times \left\{t\in \mathcal{B}_{q+1} \, : \, \dist(t,\mathcal{G}_{q+1})> \tau_{q+1}  \right\}$, or in other words, that the new Reynolds stress $R_{q+1}$ is localized in the new real bad set $\widehat{\mathcal{B}}_{q+1}$ which is made of disjoint intervals of length $3\tau_{q+1}$.

A crucial relation that will allow us to close the estimates on $R_{q+1}$ will be
\begin{equation}
\label{ell_less_lambda_q+1}
\ell^{-1}\ll \lambda_{q+1},
\end{equation}
that is a consequence of $\gamma+3\alpha<2(b-1)(1-\beta)$. By our bound on $\gamma$ in \eqref{gamma:bound2} (actually \eqref{gamma:bound1} would suffice here), the latter holds if $\alpha$ is sufficiently small.

\subsection{Mikado flows}

We now recall the construction of Mikado flows used in  \cite{BDLSV2019}.

\begin{lemma}\label{l:Mikado}For any compact subset $\mathcal N\subset\subset \S^{3\times3}_+$
there exists a smooth vector field 
$$
W:\mathcal N\times \T^3 \to \R^3, 
$$
such that, for every $R\in\mathcal N$ 
\begin{equation}\label{e:Mikado}
\left\{\begin{aligned}
\div_\xi(W(R,\xi)\otimes W(R,\xi))&=0 \\ \\
\div_\xi W(R,\xi)&=0,
\end{aligned}\right.
\end{equation}
and
\begin{eqnarray}
	\fint_{\T^3} W(R,\xi)\,d\xi&=&0,\label{e:MikadoW}\\
    \fint_{\T^3} W(R,\xi)\otimes W(R,\xi)\,d\xi&=&R.\label{e:MikadoWW}
\end{eqnarray}
\end{lemma}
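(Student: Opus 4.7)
My plan is to construct $W$ as a finite linear combination of ``pipes'' supported in disjoint tubular neighborhoods of parallel lines through $\T^3$ (this is the classical Mikado construction from \cite{DS2013, BDLSV2019}). The vector field will have the form
$$
W(R,\xi) = \sum_{i=1}^N a_i(R)\, \phi_i(\xi)\, k_i,
$$
where the $k_i \in \mathbb{S}^2 \cap \Q^3$ are a finite set of rational directions, $a_i$ are smooth nonnegative amplitudes defined on a neighborhood of $\mathcal N$, and $\phi_i \in C^\infty(\T^3)$ are scalar profiles. The ``geometric lemma'' of De Lellis--Sz\'ekelyhidi (see e.g.\ \cite[Lemma~3.3]{DS2013}) provides the decomposition: since $\mathcal N \subset\subset \S^{3\times 3}_+$, one may choose finitely many rational unit vectors $k_1,\dots,k_N$ and smooth functions $a_i \in C^\infty(\mathcal U)$ on an open neighborhood $\mathcal U$ of $\mathcal N$ such that
$$
R = \sum_{i=1}^N a_i(R)^2\, k_i \otimes k_i \qquad \forall R\in \mathcal U.
$$

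Given the $k_i$, the next step is to construct the scalar profiles $\phi_i$. Each $\phi_i$ should satisfy:  (a) $k_i \cdot \nabla \phi_i \equiv 0$, i.e.\ $\phi_i$ is constant along the rational direction $k_i$ (well-defined on $\T^3$ precisely because $k_i \in \Q^3$, so that $\phi_i$ descends to a smooth function on the 2-torus $\T^3/\langle k_i \rangle$); (b) $\fint_{\T^3}\phi_i\,d\xi = 0$ and $\fint_{\T^3}\phi_i^2\,d\xi = 1$; and (c) the supports of the $\phi_i$ are pairwise disjoint in $\T^3$, so that $\phi_i\phi_j \equiv 0$ for $i\neq j$. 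Condition (c) is achieved by letting each $\phi_i$ be a bump concentrated in a thin tubular neighborhood of a carefully chosen line parallel to $k_i$; since there are only finitely many directions, one can shift each line so that the corresponding tubes do not intersect in $\T^3$ (the tube radii may need to be taken small).

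With $W$ defined as above, the verification of the four required properties is a short computation. Smoothness in $(R,\xi)$ is immediate. Property $\div_\xi W = \sum_i a_i(R)(k_i\cdot\nabla\phi_i) = 0$ follows from (a). For the nonlinear identity,
$$
\div_\xi(W\otimes W) = \sum_{i,j} a_i(R)a_j(R)\, \div_\xi\bigl(\phi_i\phi_j\, k_i\otimes k_j\bigr),
$$
and every term with $i\neq j$ vanishes by (c), while the diagonal terms give $a_i^2(k_i\cdot\nabla\phi_i^2)k_i = 0$ again by (a). The zero-mean condition \eqref{e:MikadoW} follows from (b), and finally
$$
\fint_{\T^3} W\otimes W\,d\xi = \sum_{i,j} a_i(R)a_j(R)\, k_i\otimes k_j\fint_{\T^3}\phi_i\phi_j\,d\xi = \sum_i a_i(R)^2\, k_i\otimes k_i = R,
$$
using (b), (c), and the geometric decomposition, which yields \eqref{e:MikadoWW}.

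The only non-routine ingredient is the geometric lemma producing smooth nonnegative $a_i(R)$ with a \emph{rational} set of directions; once that is granted, the pipe construction and the algebraic verification are standard. The argument is completely analogous to the one in \cite[Section~4]{BDLSV2019} (and ultimately goes back to the Mikado flows introduced in \cite{DS2013}), and I would simply cite it.
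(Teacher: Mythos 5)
Your sketch is correct and matches what the paper does: the paper states the lemma without proof and refers to the construction in \cite{BDLSV2019} (which in turn builds on the Mikado flows of Daneri–Sz\'ekelyhidi), and your outline — geometric decomposition of $R$ into rank-one matrices $a_i(R)^2 k_i \otimes k_i$ with rational directions, followed by disjoint pipe flows constant along each $k_i$ — is exactly that standard argument. Citing \cite{BDLSV2019}, as both you and the paper do, is the appropriate level of detail here.
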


Using the fact that $W(R,\xi)$ is $\T^3-$periodic and has zero mean in $\xi$, we write
\begin{equation}\label{e:Mikado_Fourier}
W(R,\xi)=\sum_{k \in \Z^3\setminus\{0\}}  a_k(R) A_k e^{ik\cdot \xi}
\end{equation}
for some  smooth functions $R\mapsto a_k(R)$ and complex vectors $A_k\in \mathbb{C}^3$ satisfying $A_k  \cdot k=0$ and $|A_k|=1$. From the smoothness of $W$, we further infer
\begin{equation}\label{e:a_k_est}
\sup_{R\in \mathcal N}\abs{D^N_R a_k(R)}\leq  \frac{C(\mathcal{N},N,m)}{\abs {k}^m}
\end{equation}
for some constant $C$, which depends, as highlighted in the statement, on $\mathcal{N}$, $N$ and $m$.
\begin{remark}\label{r:choice_of_M}
Later in the proof, the estimates \eqref{e:a_k_est} will be used with a specific choice of the compact set $\mathcal{N}$ and of the integers $N$ and $m$: this specific choice will then determine the universal constant $M$ appearing in Proposition \ref{p:iterativeprop}.
\end{remark}

Using the Fourier representation, we see that from \eqref{e:MikadoWW}
\begin{equation}\label{e:Mikado_stationarity}
W(R,\xi)\otimes W(R,\xi) = R+\sum_{k\neq 0} C_{k}(R) e^{i k\cdot \xi}
\end{equation}
where
\begin{equation}\label{e:Ck_ind}
C_k  k=0 \quad \mbox{and} \quad
\sup_{R\in \mathcal N}\abs{D^N_R C_k(R)}\leq \frac{C (\mathcal{N}, N, m)}{\abs {k}^m}
\end{equation}
for any $m,N \in \N$. It will also be useful to write the Mikado flows in terms of a potential as follows
\begin{align}
\curl_{\xi}\left(\left(\frac{ik\times  A_k}{\abs{k}^2}\right) e^{i k\cdot \xi}\right) &= -i\left(\frac{ik\times  A_k}{\abs{k}^2}\right)\times k  e^{i k\cdot \xi} 
= -\frac{k\times (k\times  A_k)}{\abs{k}^2}  e^{i k\cdot \xi} =  A_k  e^{i k\cdot \xi}. \label{e:Mikado_Potential}
\end{align}

\subsection{The stress tensor $\tilde R_{q,i}$} Recall that $\overline{R}_q$ is supported in the set $\T^3\times I$, where $I$ is the union of disjoint intervals of length $\tau_{q+1}$. Thus we can write
$$
I=\bigcup_{i}I_i, \, \text{ where } |I_i|=\tau_{q+1}.
$$
The following lemma gives the family of cutoffs that will allow us to localize the perturbation (and thus the new Raynolds stress) in the new real bad set $\widehat{\mathcal{B}}_{q+1}$.

\begin{lemma}\label{l:cutoff_pert}
There exist smooth cutoff functions $\{\eta_p^i\}_i$ such that $\eta_p^i\big|_{I_i}\equiv 1$, $ \supp \eta_p^i \cap \supp \eta_p^j=\emptyset$ if $i\neq j$, $\supp \eta_p^i \subset \left\{t\in I_i\, : \, \dist(t,I_i)< \tau_{q+1} \right\}$. Moreover,  for any $i$ and $N\geq 0$ we have
\begin{equation}\label{est_cutoff_pert}
\|\partial_t^N\eta_p^i\|_{0}\lesssim \tau_{q+1}^{-N}.
\end{equation}
\end{lemma}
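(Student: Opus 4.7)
The lemma is a routine bump function construction, and the only real content is a parameter check (that the $\tau_{q+1}$-neighborhoods of the intervals $I_i$ are pairwise disjoint), together with the standard chain rule derivative estimate. I would proceed in three short steps.

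\textbf{Step 1 (disjointness of neighborhoods).} Recall from Section \ref{s:glueingandloc} that the intervals $\{I_i\}$ have length $\tau_{q+1}$ and their centers are spaced at distance $\theta_{q+1}$ apart. Hence adjacent intervals $I_i, I_{i+1}$ are separated by a gap of length $\theta_{q+1} - \tau_{q+1}$. Enlarging each interval by $\tau_{q+1}$ on either side leaves a residual gap of $\theta_{q+1} - 3\tau_{q+1}$ between $\tau_{q+1}$-neighborhoods of adjacent $I_i$. By the compatibility \eqref{e:timecompatibility} (which enforces $5\tau_{q+1}<\theta_{q+1}$), this residual gap is strictly positive, so the $\tau_{q+1}$-neighborhoods of distinct $I_i$'s are disjoint.

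\textbf{Step 2 (construction).} Fix once and for all a smooth non-increasing function $\rho\in C^\infty(\R;[0,1])$ with $\rho\equiv 1$ on $(-\infty,0]$ and $\rho\equiv 0$ on $[1,\infty)$. Writing $I_i=[a_i,b_i]$ with $b_i-a_i=\tau_{q+1}$, define
\begin{equation*}
\eta_p^i(t) \,:=\, \rho\!\left(\frac{a_i - t}{\tau_{q+1}}\right)\rho\!\left(\frac{t-b_i}{\tau_{q+1}}\right).
\end{equation*}
Then $\eta_p^i \equiv 1$ on $I_i$, $\supp\eta_p^i \subset [a_i-\tau_{q+1},\, b_i+\tau_{q+1}]$, i.e.\ $\supp\eta_p^i$ lies in the $\tau_{q+1}$-neighborhood of $I_i$, and by Step 1 the supports are pairwise disjoint.

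\textbf{Step 3 (derivative estimates).} The bound \eqref{est_cutoff_pert} is an immediate consequence of the Leibniz and chain rules applied to the definition of $\eta_p^i$: each temporal derivative produces a factor $\tau_{q+1}^{-1}$, and the remaining constants depend only on $N$ and on the fixed bump profile $\rho$, so
\begin{equation*}
\|\partial_t^N \eta_p^i\|_{0}\,\lesssim_{N,\rho}\, \tau_{q+1}^{-N}.
\end{equation*}

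There is no genuine obstacle here; the only point worth flagging is that the construction implicitly relies on the parameter hierarchy \eqref{e:timeparameters}--\eqref{e:timecompatibility} to ensure the enlarged intervals remain disjoint, so that the $\eta_p^i$ can be chosen with pairwise disjoint supports.
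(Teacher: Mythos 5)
Your construction is correct and is the standard one; the paper states Lemma \ref{l:cutoff_pert} without proof, so there is no argument of the authors to compare against, and your three steps (disjointness of the $\tau_{q+1}$-neighbourhoods via \eqref{e:timecompatibility}, explicit bump functions, chain-rule bound) are exactly what is needed. The only cosmetic adjustment is to take $\rho\equiv 0$ already on $[\sfrac{1}{2},\infty)$, say, so that $\supp\eta_p^i$ lies in the \emph{open} $\tau_{q+1}$-neighbourhood of $I_i$ rather than the closed one produced by your choice of $\rho$; this strict containment is what is used later to obtain the strict inequality $\dist(t,\mathcal G_{q+1})>\tau_{q+1}$ in \eqref{support_pert}.
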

Let $s_i$ be the middle point of $I_i$. Define the flows $\Phi_i$ associated to the velocity field $\overline v_q$ as the solution of 
$$
\left\{\begin{array}{l}
(\partial_t +\overline v_q\cdot \nabla)  \Phi_i =0\\
\Phi_i(x,s_i)=x.
\end{array}\right.
$$
Define also
\begin{equation}\label{d:tildeRqi}
\tilde R_{q,i}:=\frac{\nabla \Phi_i \left(\delta_{q+1} \Id -\overline R_q\right) \nabla\Phi_i^T}{\delta_{q+1}}.
\end{equation}
We have the following
\begin{lemma}\label{l:Rqi_identity}
For $a\gg 1$ sufficiently large, we have 
\begin{equation}\label{phi_close_id}
\|\nabla \Phi_i(t)-\Id\|_0 \leq \frac{1}{2}, \quad \forall t\in \supp \eta_p^i.
\end{equation}
Moreover, for all $(x,t)\in \T^3\times \supp \eta_p^i$
$$
\tilde R_{q,i}(x,t)\in B_{\frac12}(\Id)\subset\mathcal{S}^{3\times 3}_+,
$$
where $ B_{\frac12}(\Id) $ denotes the ball of radius $\frac12$ around the identity, in the space of positive definite matrices.
\end{lemma}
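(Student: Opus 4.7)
The proof splits into the two claimed estimates. For both, the strategy is to reduce to Gronwall-type bounds along the flow $\Phi_i$ and then exploit that the frequency parameter $a$ can be taken arbitrarily large.

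For the first estimate, I will differentiate the flow equation $(\partial_t + \overline v_q\cdot \nabla)\Phi_i = 0$ in space. Setting $F_i := \nabla \Phi_i - \Id$ and using the convention of summing over repeated indices, $F_i$ satisfies the transport equation
\begin{equation*}
(\partial_t + \overline v_q \cdot \nabla) F_i = -(\nabla \overline v_q) \nabla \Phi_i = -(\nabla \overline v_q)(F_i + \Id),
\end{equation*}
with initial datum $F_i(\cdot, s_i)= 0$. Since $s_i$ is the midpoint of $I_i$ and $\supp \eta_p^i$ is contained in a $\tau_{q+1}$-neighbourhood of $I_i$, we have $|t-s_i| \leq \tfrac{3}{2}\tau_{q+1}$ throughout $\supp \eta_p^i$. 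Applying the transport estimate of Proposition \ref{p:transport} together with $\norm{\nabla \overline v_q}_0 \lesssim \delta_q^{\sfrac{1}{2}}\lambda_q$ from \eqref{e:pglueing6} (taking $N=0$) and a standard Gronwall argument yields
\begin{equation*}
\norm{F_i(t)}_0 \lesssim \tau_{q+1}\, \delta_q^{\sfrac{1}{2}}\lambda_q = \lambda_q^{-\gamma-3\alpha}\qquad \forall\, t \in \supp \eta_p^i,
\end{equation*}
where I used the definitions of $\tau_{q+1}$ and $\theta_{q+1}$. For $a$ sufficiently large (depending on all the parameters but not on $q$), the right-hand side is $\leq \tfrac{1}{2}$, which proves \eqref{phi_close_id}.

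For the second estimate I will decompose the difference $\tilde R_{q,i} - \Id$ algebraically and estimate each piece. Write
\begin{equation*}
\tilde R_{q,i} - \Id = \big(\nabla \Phi_i \nabla \Phi_i^T - \Id\big) - \frac{\nabla \Phi_i\, \overline R_q\, \nabla\Phi_i^T}{\delta_{q+1}}.
\end{equation*}
Using part 1 we have $\norm{\nabla \Phi_i}_0 \leq \tfrac{3}{2}$ on $\supp \eta_p^i$, so the first term is bounded by a constant times $\norm{F_i}_0 \lesssim \lambda_q^{-\gamma-3\alpha}$, while the second term is bounded by
\begin{equation*}
\Big(\tfrac{3}{2}\Big)^2 \frac{\norm{\overline R_q}_0}{\delta_{q+1}} \leq C\, \ell^{\alpha},
\end{equation*}
where I used \eqref{e:pglueing7} at $N=0$ to get $\norm{\overline R_q}_0 \leq \norm{\overline R_q}_\alpha \lesssim \delta_{q+1}\ell^\alpha$. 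Both contributions are powers of $\lambda_q^{-1}$ with positive exponent (the first directly, the second since $\ell \lesssim \lambda_q^{-1}$ by \eqref{l:bound}), hence can be made $\leq \tfrac{1}{2}$ by choosing $a$ sufficiently large, independently of $q$. This gives $\tilde R_{q,i}(x,t) \in B_{\sfrac{1}{2}}(\Id)$ for all $(x,t) \in \T^3 \times \supp \eta_p^i$. Since every matrix in $B_{\sfrac{1}{2}}(\Id)$ (in the operator norm, which is controlled by the $C^0$ norm on matrices) has eigenvalues in $[\tfrac{1}{2}, \tfrac{3}{2}]$ and is symmetric (as $\overline R_q$ is symmetric), the inclusion $B_{\sfrac{1}{2}}(\Id) \subset \mathcal S^{3\times 3}_+$ follows.

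The argument contains no genuine obstacle: both bounds reduce to Gronwall applied along $\overline v_q$ and to the already established estimates on $\overline R_q$ from Proposition \ref{p:glueing}. The only point that requires minor care is matching powers of $\lambda_q$ to ensure that the constants — which depend on $\beta, b, \gamma, \alpha$ through the Schauder/transport estimates — are absorbed by choosing $a_0$ large at the very end, consistently with the hypotheses of Proposition \ref{p:iterativeprop}.
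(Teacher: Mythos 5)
Your proof is correct and follows essentially the same route as the paper: the bound \eqref{phi_close_id} comes from the transport estimate along the flow of $\overline v_q$ (you re-derive the Gronwall bound that the paper cites directly via \eqref{e:Dphi_near_id}), and the second claim comes from the same algebraic decomposition of $\tilde R_{q,i}-\Id$ combined with \eqref{e:pglueing7} for $\norm{\overline R_q}_0$ and the just-proved bound on $\norm{\nabla\Phi_i-\Id}_0$. The only cosmetic differences are that the paper keeps the slightly cruder bound $\lambda_q^{-\gamma}$ in \eqref{gradphi_close_id} where you track the sharper $\lambda_q^{-\gamma-3\alpha}$, and that you spell out explicitly why $B_{\sfrac12}(\Id)\subset\mathcal S^{3\times 3}_+$.
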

\begin{proof}
By applying \eqref{e:pglueing6} and \eqref{e:Dphi_near_id} we obtain 
\begin{equation}\label{gradphi_close_id}
\|\nabla \Phi_i(t)-\Id\|_0 \lesssim |t-s_i|\|\overline v_q\|_1\lesssim \tau_{q+1}\delta_q^{\sfrac12 }\lambda_q\leq \lambda_q^{-\gamma}.
\end{equation}
Furthermore, by definition we have 
\begin{align*}
\tilde R_{q,i}-\Id&= - \nabla \Phi_i \frac{\overline R_q}{\delta_{q+1}} \nabla \Phi_i^T+\nabla \Phi_i \nabla\Phi_i^T-\Id\\
&= - \nabla \Phi_i \frac{\overline R_q}{\delta_{q+1}} \nabla \Phi_i^T+\left( \nabla\Phi_i -\Id\right) \nabla\Phi_i^T+ \left(\nabla\Phi_i -\Id \right)^T,
\end{align*}
from which, by using \eqref{e:pglueing7} and \eqref{gradphi_close_id}, we obtain for $t \in \supp \eta_p^i$
$$
\| \tilde R_{q,i}-\Id\|_0\lesssim \frac{\|\overline R_q\|_0}{\delta_{q+1}}+ \|\nabla\Phi_i -\Id\|_0\lesssim \ell^\alpha+\lambda^{-\gamma}.
$$
By choosing $a\gg1$ large enough, we conclude $ \tilde R_{q,i}(x,t)\in B_{\frac12}(\Id)$ for every $(x,t)\in \T^3\times \supp \eta_p^i$.
\end{proof}

\subsection{The perturbation,  the constant M and the properties \ref{Bq:v} and \ref{Bq:vii}}
We define the principal part the the perturbation as 
$$
w_o:= \sum_i \eta_p^i \delta_{q+1}^{\sfrac12} \nabla \Phi_i^{-1} W(\tilde R_{q,i}, \lambda_{q+1}\Phi_i)=\sum_i w_{o,i},
$$
where Lemma \ref{l:Mikado} is applied with $\mathcal{N}=\overline B_{\frac12}(\Id)$. Notice that from Lemma \ref{l:Rqi_identity} it follows that $W(\tilde R_{q,i}, \lambda_{q+1}\Phi_i)$ is well defined. Using the Fourier series representation \eqref{e:Mikado_Fourier} we obtain
$$
w_{o,i}=\sum_{k\neq 0}\eta_p^i \delta_{q+1}^{\sfrac12} a_k(\tilde R_{q,i})\nabla \Phi_i^{-1}A_k e^{i\lambda_{q+1} k\cdot \Phi_i}.
$$
The choice of $w_o$ is motivated by the fact that the vector fields $U_{i,k}=\nabla \Phi_i^{-1} A_k e^{i\lambda_{q+1} k\cdot \Phi_i}$ solve 
\begin{equation}\label{rel_div_free}
(\partial_t+\overline v_q\cdot \nabla)U_{i,k}=\nabla \overline v_q^T U_{i,k}\,.
\end{equation}
In particular, since $\div U_{i, k}(x, s_i)= 0$ for all $x \in \T^3$, $U_{i,k}$ remains divergence free. 

For notational convenience we set 
$$
b_{i,k}(x,t):=\eta_p^i(t)\delta_{q+1}^{\sfrac12} a_k(\tilde R_{q,i}) A_k,
$$
so that we may write
$$
w_{o,i}=\sum_{k\neq 0} \nabla \Phi_i^{-1} b_{i,k} e^{i\lambda_{q+1} k\cdot \Phi_i}.
$$
The following lemma ensures that the constant $M$ from Proposition \ref{p:iterativeprop} is geometric and, in particular, does not depend on all the parameters entering in the scheme.

\begin{lemma}\label{l:gemetric_const}
There exists a geometric constant $\overline C>0$ such that 
$$
\|b_{i,k}\|_0\leq \frac{\overline C}{|k|^5}\delta_{q+1}^{\sfrac12}.
$$
\end{lemma}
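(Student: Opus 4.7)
The plan is to unpack the definition of $b_{i,k}$ and bound each factor separately, with the key observation being that all quantities involved are controlled by universal (geometric) data.

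First I would recall that by definition
\[
b_{i,k}(x,t) = \eta_p^i(t)\, \delta_{q+1}^{\sfrac{1}{2}}\, a_k\!\left(\tilde R_{q,i}(x,t)\right) A_k,
\]
where $|A_k| = 1$ by construction of the Mikado flows and $0 \le \eta_p^i \le 1$ by Lemma \ref{l:cutoff_pert}. So the only non-trivial factor is $a_k(\tilde R_{q,i})$. Here the crucial point is that $b_{i,k}$ is nonzero only on $\supp \eta_p^i$, and on this set Lemma \ref{l:Rqi_identity} ensures $\tilde R_{q,i}(x,t) \in \overline{B}_{\sfrac{1}{2}}(\Id) \subset \mathcal{S}^{3\times 3}_+$. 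This is precisely the compact set $\mathcal{N}$ for which Lemma \ref{l:Mikado} was invoked in defining the Mikado flows above.

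Next I would apply the bound \eqref{e:a_k_est} with the specific choice $N = 0$ and $m = 5$ (as anticipated in Remark \ref{r:choice_of_M}), giving
\[
\sup_{R \in \overline{B}_{\sfrac{1}{2}}(\Id)} |a_k(R)| \leq \frac{C(\overline{B}_{\sfrac{1}{2}}(\Id), 0, 5)}{|k|^5}.
\]
Setting $\overline C := C(\overline{B}_{\sfrac{1}{2}}(\Id), 0, 5)$, this is a fixed universal constant depending only on the mollifier kernel and the dimensional geometry of the Mikado construction — not on the iteration step $q$ nor on any of the parameters $\beta, b, \gamma, \alpha$. Combining the three bounds yields
\[
\|b_{i,k}\|_0 \leq \|\eta_p^i\|_0 \cdot \delta_{q+1}^{\sfrac{1}{2}} \cdot \sup |a_k(\tilde R_{q,i})| \cdot |A_k| \leq \frac{\overline C}{|k|^5}\, \delta_{q+1}^{\sfrac{1}{2}},
\]
which is the desired inequality.

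There is no real obstacle in this proof; the content of the statement is essentially bookkeeping. The only thing to be careful about is verifying that the constant is actually universal: this relies on the fact that $\mathcal{N} = \overline{B}_{\sfrac{1}{2}}(\Id)$ was chosen once and for all (independently of $q$) in the Mikado construction, and that Lemma \ref{l:Rqi_identity} confirms $\tilde R_{q,i}$ remains in this fixed set for every $q$ provided $a$ is sufficiently large. The universal constant $M$ appearing in Proposition \ref{p:iterativeprop} will then be fixed in terms of $\overline C$ through the Fourier series summation later in the perturbation estimates.
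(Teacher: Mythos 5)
Your proof is correct and follows exactly the same approach as the paper, which simply says ``Apply \eqref{e:a_k_est} with $N=0$, $m=5$ and $\mathcal{N}=\overline B_{\frac12}(\Id)$''; you have merely unpacked the definition of $b_{i,k}$, noted $|A_k|=1$ and $\|\eta_p^i\|_0\leq 1$, and invoked Lemma~\ref{l:Rqi_identity} to justify that the argument of $a_k$ lies in the fixed compact set, which is precisely the bookkeeping the paper leaves implicit.
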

\begin{proof}
Apply \eqref{e:a_k_est} with $N=0$, $m=5$ and  $\mathcal{N}=\overline B_{\frac12}(\Id)$.
\end{proof}
 We are now ready to define the geometric constant $M$ of Proposition \ref{p:iterativeprop}.
 \begin{definition}\label{d:constant_M}
The constant $M$ is defined as 
$$
M=64 \overline C \sum_{k\in \mathbb{Z}^3\setminus \{0\}} \frac{1}{|k|^4},
$$
where $\overline C$ is the constant of Lemma \ref{l:gemetric_const}.
\end{definition}
To ensure that $w_{q+1}$ is divergence free we will add a corrector term $w_c$ to $w_o$. More precisely, in view of \eqref{e:Mikado_Potential}, we define
$$
w_c:=\frac{-i}{\lambda_{q+1}}\sum_{i,k\neq 0} \eta_p^i \delta_{q+1}^{\sfrac12}\nabla a_k(\tilde R_{q,i}) \times \frac{\nabla \Phi_i^T(k\times A_k)}{|k|^2} e^{i\lambda_{q+1} k\cdot \Phi_i}=\sum_{i,k\neq 0} c_{i,k} e^{i\lambda_{q+1} k\cdot \Phi_i},
$$
where 
$$
c_{i,k}:=\frac{-i}{\lambda_{q+1}} \eta_p^i \delta_{q+1}^{\sfrac12}\nabla a_k(\tilde R_{q,i}) \times \frac{\nabla \Phi_i^T(k\times A_k)}{|k|^2} .
$$
By using \eqref{e:Mikado_Potential}, one can check that 
$$
w_{q+1}:=w_o+w_c=\frac{-1}{\lambda_{q+1}} \curl \left(\sum_{i,k\neq 0} \nabla \Phi_i^T \frac{ik\times b_{i,k}}{|k|^2} e^{i\lambda_{q+1} k\cdot \Phi_i} \right),
$$
from which we deduce $\div w_{q+1}=0$. Finally, note that thanks to the cutoffs $\eta_p^i$ from Lemma \ref{l:cutoff_pert}, we also get 
\begin{equation}\label{support_pert}
\supp w_{q+1}\subset \T^3\times \left\{t\in \mathcal{B}_{q+1} \, : \, \dist(t,\mathcal{G}_{q+1})> \tau_{q+1} \right\} = \widehat{\mathcal{B}}_{q+1} \,.
\end{equation}
Recalling \eqref{e:pglueing1} and the inductive assumption \ref{Bq:v} on $v_q$, this guarantees the property \ref{Bq:v} at step $q+1$ . Moreover, by \eqref{e:pglueing6} and \eqref{support_pert} we also get \ref{Bq:vii} at step $q+1$, since for $N\geq 0$
 $$
 \|v_{q+1}(t)\|_{N+1}\leq \|\overline v_q(t)+w_{q+1}(t)\|_{N+1} =\|\overline v_q(t)\|_{N+1}\leq \delta_q^{\sfrac12}\lambda_q\ell_q^{-N}, \quad \forall t\in \widehat{\mathcal{B}}^c_{q+1}.
 $$
 \subsection{The final Reynolds stress and property \ref{Bq:vi}}
We define the new Reynolds stress as 
\begin{equation}\label{d:R_q+1}
\begin{split}
R_{q+1}&:=\mathcal{R}(w_{q+1}\cdot \nabla \overline v_q) + \mathcal{R}(\partial_t w_{q+1} +\overline{v}_q \cdot \nabla w_{q+1})+ \mathcal{R}\div (\overline R_q+ w_{q+1}\otimes w_{q+1})\\
&=R_{nash}+ R_{transp}+ R_{osc}.
\end{split}
\end{equation}
Notice that in all the three terms of the previous formula, the operator $\mathcal{R}$ is always applied to a divergence of a curl (thus to zero average vector fields). Moreover, by \eqref{support_pert} and \eqref{e:pglueing2}, we directly get  
$$
\supp R_{q+1}\subset \T^3\times \widehat{\mathcal{B}}_{q+1},
$$
which proves property \ref{Bq:vi} at step $q+1$.
With this definition, one may check that 
$$
\left\{\begin{array}{l}
\partial_t v_{q+1}+\div (v_{q+1}\otimes v_{q+1})  +\nabla p_{q+1} =\div R_{q+1}\\
\div v_{q+1} = 0,
\end{array}\right.
$$
where the new pressure is defined as $p_{q+1}:=\overline p_q$.

\subsection{Estimate on the perturbation}
We start by estimating all the terms entering in the definition of $w_{q+1}$. 
\begin{proposition}\label{p:ugly1}
For all $t\in \supp \eta_p^i$ and every $N\geq 0\,,$ we have 
\begin{align}
\norm{ (\nabla\Phi_i)^{-1}}_N + \norm{\nabla\Phi_i}_N &\lesssim \ell^{-N} \,,\label{e:phi_N}\\
\norm{\tilde R_{q,i}}_N &\lesssim  \ell^{-N}\,,\label{e:tR_est}\\
\norm{b_{i,k}}_N &\lesssim \delta_{q+1}^{\sfrac12}|k|^{-6}\ell^{-N}\,, \label{e:b_k_est_N}\\
\norm{c_{i,k}}_N &\lesssim  \delta_{q+1}^{\sfrac12}\lambda_{q+1}^{-1}|k|^{-6}\ell^{-N-1}\,.\label{e:c_k_est}
\end{align}
\end{proposition}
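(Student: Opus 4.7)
The plan is to prove the four bounds in the order in which they are stated, since each one feeds into the next. In every case, the restriction $t \in \supp \eta_p^i$ gives us the key smallness $|t-s_i| \leq \tau_{q+1}$, and the bookkeeping on $\ell^{-N}$ powers is driven by \eqref{e:pglueing6} and \eqref{e:pglueing7}.

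First I will establish \eqref{e:phi_N} by applying the standard ODE flow estimates (as proved in the appendix of \cite{BDLSV2019}) to $\Phi_i$, which is the flow of $\overline v_q$. The small-time hypothesis required by these estimates is met because
\[
|t-s_i| \, \norm{\nabla \overline v_q}_0 \lesssim \tau_{q+1}\, \delta_q^{\sfrac{1}{2}} \lambda_q = \lambda_q^{-\gamma - 3\alpha} \ll 1,
\]
so the general flow estimate $\norm{\nabla \Phi_i(t)}_N \lesssim 1 + |t-s_i|\,\norm{\overline v_q}_{N+1}$ combines with \eqref{e:pglueing6} to yield $\norm{\nabla \Phi_i}_N \lesssim 1 + \lambda_q^{-\gamma-3\alpha}\ell^{-N} \lesssim \ell^{-N}$. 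For the inverse, the bound \eqref{phi_close_id} from Lemma \ref{l:Rqi_identity} makes the Neumann series $(\nabla \Phi_i)^{-1} = \sum_{j\geq 0}(\mathrm{Id} - \nabla\Phi_i)^j$ converge in $C^0$; iterating the H\"older product estimate \eqref{e:Holderproduct} gives $\norm{(\mathrm{Id}-\nabla\Phi_i)^j}_N \lesssim j\, 2^{1-j}\ell^{-N}$, whose $j$-sum is $\lesssim \ell^{-N}$.

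Next I turn to \eqref{e:tR_est}. From the product definition \eqref{d:tildeRqi}, a triple application of the Leibniz/product rule in H\"older norms gives
\[
\norm{\tilde R_{q,i}}_N \lesssim \sum_{k_1+k_2+k_3 = N} \norm{\nabla\Phi_i}_{k_1} \Bigl( 1 + \tfrac{1}{\delta_{q+1}}\norm{\overline R_q}_{k_2}\Bigr) \norm{\nabla\Phi_i}_{k_3},
\]
and inserting \eqref{e:phi_N} together with $\delta_{q+1}^{-1}\norm{\overline R_q}_{k_2} \lesssim \ell^{-k_2 + \alpha}$ from \eqref{e:pglueing7} yields $\norm{\tilde R_{q,i}}_N \lesssim \ell^{-N}$. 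As a side benefit, Lemma \ref{l:Rqi_identity} already guarantees that $\tilde R_{q,i}(x,t) \in \overline B_{1/2}(\mathrm{Id}) \subset \mathcal N$, so the composition $a_k \circ \tilde R_{q,i}$ is well defined.

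For \eqref{e:b_k_est_N} and \eqref{e:c_k_est}, the essential tool is a composition (Fa\`a di Bruno) estimate of the form $\norm{f\circ g}_N \lesssim \norm{Df}_0 \norm{g}_N + \norm{f}_{C^N}\norm{g}_1^N$, available in the appendix. Crucially, I will apply \eqref{e:a_k_est} with the \emph{fixed} choice $m=6$ (independent of $N$), which produces $\norm{D_R^j a_k}_0 \leq C(\mathcal N, j, 6)\,|k|^{-6}$ for all $j \leq N$, so that the constants depend on $N$ but the $|k|$-decay is a uniform $|k|^{-6}$; this choice is exactly what fixes $\overline C$ in Lemma \ref{l:gemetric_const} and hence the universal constant $M$ from Definition \ref{d:constant_M}. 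Combining the composition estimate with \eqref{e:tR_est} gives $\norm{a_k(\tilde R_{q,i})}_N \lesssim |k|^{-6}\ell^{-N}$, and since $\eta_p^i$ only depends on time, \eqref{e:b_k_est_N} follows. For $c_{i,k}$ I write $\nabla a_k(\tilde R_{q,i}) = D_R a_k(\tilde R_{q,i})\cdot \nabla \tilde R_{q,i}$ and apply the same machinery together with \eqref{e:tR_est}; the extra spatial derivative is responsible for the $\ell^{-1}$ gain, the explicit prefactor $\lambda_{q+1}^{-1}$ enters unchanged, and a final Leibniz against $\nabla \Phi_i^T$ using \eqref{e:phi_N} closes the bound. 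There is no real obstacle here beyond careful bookkeeping; the main subtlety is precisely the insistence on using \eqref{e:a_k_est} with a fixed $m$, so that the implicit constants in the statement of Proposition \ref{p:ugly1} depend on $N$ but not on the step $q$.
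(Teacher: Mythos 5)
Your proof is correct and follows the paper's argument closely: flow estimates combined with \eqref{e:pglueing6} and \eqref{phi_close_id} for $\nabla\Phi_i$ (and a Neumann series for the inverse, which the paper treats as standard and does not spell out), the Leibniz/product estimate for $\tilde R_{q,i}$, and a composition estimate together with \eqref{e:a_k_est} for the $b_{i,k}$ and $c_{i,k}$ bounds. Two tiny bookkeeping slips that do not affect validity: the composition (Fa\`a di Bruno) estimate you invoke is a standard fact used implicitly from \cite{BDLSV2019} rather than stated in this paper's appendix, and the geometric constant $\overline C$ (and hence $M$) is in fact pinned down by taking $m=5$, not $m=6$, in \eqref{e:a_k_est} for the $N=0$ bound of Lemma \ref{l:gemetric_const}, while the looser $|k|^{-6}$ decay in Proposition~\ref{p:ugly1} corresponds to $m=6$.
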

\begin{proof}
Let $t\in \supp \eta_p^i$. From \eqref{e:Dphi_near_id}, \eqref{e:Dphi_N},  \eqref{e:pglueing6} and \eqref{phi_close_id}, we obtain 
\begin{align*}
\|\nabla \Phi_i\|_N \lesssim \|\nabla \Phi_i\|_0+[\nabla \Phi_i]_N \lesssim 1+\|\nabla \Phi_i-\Id\|_0+[\nabla \Phi_i]_N\lesssim 1+\tau_{q+1}\|\nabla \overline v_q\|_N\lesssim \ell^{-N}.
\end{align*}
Moreover, by also using \eqref{e:pglueing7} we get 
$$
\|\tilde R_{q,i}\|_N\lesssim \| \nabla \Phi_i\|_N\| \nabla \Phi_i\|_0+\| \nabla \Phi_i\|_0^2\left\|\frac{\overline R_q}{\delta_{q+1}} \right\|_N\lesssim \ell^{-N}+\ell^{-N+\alpha}\lesssim \ell^{-N},
$$
which proves \eqref{e:tR_est}.
Finally, by the two previous estimates we also deduce 
$$
\|b_{i,k}\|_N\lesssim \delta_{q+1}^{\sfrac12} \|\tilde R_{q,i}\|_N|k|^{-6}\lesssim \delta_{q+1}^{\sfrac12} \ell^{-N}|k|^{-6}\,,
$$
where the constant in the inequality only depends on $N$ (see \eqref{e:a_k_est}). Similarly,
$$
\|c_{i,k}\|_N\lesssim \delta_{q+1}^{\sfrac12} \lambda_{q+1}^{-1}|k|^{-6}\left( \|\tilde R_{q,i}\|_{N+1}+\|\nabla \Phi_i\|_{N}\right)\lesssim \delta_{q+1}^{\sfrac12}\lambda_{q+1}^{-1}|k|^{-6}\ell^{-N-1}.
$$
\end{proof}

\begin{corollary}\label{c:pert_est}
If $a\gg 1$ is sufficiently large, the perturbation satisfies the estimates 
\begin{align}
\norm{w_o}_0 +\frac{1}{\lambda_{q+1}}\norm{w_o}_1 &\leq \frac{M}{4}\delta_{q+1}^{\sfrac 12},\label{e:w_o_est}\\
\norm{w_c}_0+\frac{1}{\lambda_{q+1}} \norm{w_c}_1 &\lesssim \delta_{q+1}^{\sfrac 12}\ell^{-1}\lambda_{q+1}^{-1},\label{e:w_c_est}\\
\norm{w_{q+1}}_0 +\frac{1}{\lambda_{q+1}}\norm{w_{q+1}}_1 &\leq  \frac{M}{2} \delta_{q+1}^{\sfrac 12}.\label{e:w_est}
\end{align}
In particular, \eqref{v_q+1-bar v_q} holds.
\end{corollary}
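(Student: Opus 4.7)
The plan is to exploit that, by Lemma \ref{l:cutoff_pert}, the temporal cutoffs $\eta_p^i$ have pairwise disjoint supports. Since each $w_{o,i}$ and $w_{c,i}$ is supported in time on $\supp \eta_p^i$, the spatial $C^N$ norms with sup in time satisfy $\norm{w_o}_N = \sup_i \norm{w_{o,i}}_N$ and $\norm{w_c}_N = \sup_i \norm{w_{c,i}}_N$, which reduces the task to estimating a single index $i$. Moreover, from \eqref{phi_close_id} and a Neumann-series argument, $\|(\nabla \Phi_i)^{-1}\|_0 \leq 2$ and $\|\nabla \Phi_i\|_0 \leq \tfrac{3}{2}$ on $\supp \eta_p^i$.

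For the $C^0$-bound on $w_{o,i}$, I would use the Fourier representation together with Lemma \ref{l:gemetric_const} to estimate
\begin{equation*}
\norm{w_{o,i}}_0 \leq \sum_{k\neq 0} \|(\nabla \Phi_i)^{-1}\|_0 \|b_{i,k}\|_0 \leq 2\bar C\, \delta_{q+1}^{\sfrac12} \sum_{k\neq 0} |k|^{-5} \leq 2\bar C\, \delta_{q+1}^{\sfrac12} \sum_{k \neq 0}|k|^{-4} = \tfrac{M}{32}\delta_{q+1}^{\sfrac12},
\end{equation*}
where the last equality comes from Definition \ref{d:constant_M}. For $\|w_{o,i}\|_1$, I would differentiate and split into two contributions: the derivative falling on the slow amplitude $\nabla\Phi_i^{-1} b_{i,k}$, which is controlled by $C\ell^{-1}\delta_{q+1}^{\sfrac12} \sum_k |k|^{-5}$ via \eqref{e:phi_N} and \eqref{e:b_k_est_N}; and the derivative hitting the oscillatory phase, which produces the factor $i\lambda_{q+1} \nabla \Phi_i^T k$ and dominates. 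The dominant term is bounded by $3\bar C\,\lambda_{q+1}\delta_{q+1}^{\sfrac12}\sum_k |k|^{-4} = \tfrac{3M}{64}\lambda_{q+1}\delta_{q+1}^{\sfrac12}$. Dividing by $\lambda_{q+1}$ and absorbing the non-oscillatory contribution $C\ell^{-1}\lambda_{q+1}^{-1}\delta_{q+1}^{\sfrac12}$ into an arbitrarily small constant using \eqref{ell_less_lambda_q+1} (by choosing $a$ large enough), I obtain \eqref{e:w_o_est} with comfortable margin $\tfrac{M}{32}+\tfrac{3M}{64}+o(1) \leq \tfrac{M}{4}$.

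The estimate on $w_c$ proceeds identically, except that \eqref{e:c_k_est} is used in place of Lemma \ref{l:gemetric_const} and \eqref{e:b_k_est_N}. The extra $\lambda_{q+1}^{-1}\ell^{-1}$ factor carried by $c_{i,k}$ yields directly $\norm{w_{c,i}}_0 \lesssim \delta_{q+1}^{\sfrac12}\lambda_{q+1}^{-1}\ell^{-1}$, and the oscillatory contribution in $\|w_{c,i}\|_1$ produces an extra $\lambda_{q+1}|k|$ that combines with \eqref{e:c_k_est} to give $\norm{w_{c,i}}_1 \lesssim \delta_{q+1}^{\sfrac12}\ell^{-1}$, proving \eqref{e:w_c_est}. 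Finally, \eqref{e:w_est} is the triangle inequality $\|w_{q+1}\|_0+\lambda_{q+1}^{-1}\|w_{q+1}\|_1 \leq \tfrac{M}{4}\delta_{q+1}^{\sfrac12} + C\delta_{q+1}^{\sfrac12}\ell^{-1}\lambda_{q+1}^{-1}$, where the second term is absorbed into $(\tfrac{M}{2}-\tfrac{M}{4})\delta_{q+1}^{\sfrac12}$ by taking $a$ large in view of \eqref{ell_less_lambda_q+1}; and \eqref{v_q+1-bar v_q} is immediate since $v_{q+1}-\overline v_q = w_{q+1}$ by \eqref{e:final5}.

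The main obstacle is bookkeeping constants carefully enough that the final bound involves the specific universal constant $M$ from Definition \ref{d:constant_M} rather than one depending on the parameters of the iteration. This is possible precisely because Lemma \ref{l:gemetric_const} provides a $|k|^{-5}$ decay of $\norm{b_{i,k}}_0$ with a geometric constant $\overline C$ (nonlinear dependence on $\tilde R_{q,i}$ being absorbed once and for all into $\overline C$ via the compact choice $\mathcal N = \overline B_{1/2}(\Id)$), and because \eqref{ell_less_lambda_q+1} lets all non-oscillatory contributions be treated as asymptotically negligible. Without \eqref{ell_less_lambda_q+1}, the $\ell^{-1}$ losses carried by \eqref{e:b_k_est_N} and \eqref{e:phi_N} would spoil the geometric dependence of the constant on $M$.
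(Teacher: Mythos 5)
Your proof is correct and follows essentially the same route as the paper: bound $\|w_o\|_0$ via the disjoint cutoff supports, $\|(\nabla\Phi_i)^{-1}\|_0\leq 2$, and Lemma~\ref{l:gemetric_const}; split $\nabla w_{o,i}$ into the dominant oscillatory term (controlled by the geometric constant $\overline C$ and Definition~\ref{d:constant_M}) plus a non-oscillatory remainder of order $\ell^{-1}$; absorb the latter using \eqref{ell_less_lambda_q+1}; and handle $w_c$ by the same argument with \eqref{e:c_k_est}. The only minor difference is that you track $\|\nabla\Phi_i\|_0\leq\tfrac{3}{2}$ while the paper simply uses $\leq 2$, yielding slightly different intermediate constants, but this does not affect the conclusion.
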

\begin{proof}
From \eqref{phi_close_id}, we deduce that $\|\nabla\Phi_i\|_0\leq 2$ on $\supp \eta_p^i$. Thus, since $\eta_p^i$ have disjoint supports, from Lemma \ref{l:gemetric_const} we get 
\begin{equation}\label{e:wo_0}
\|w_o\|_0\leq 2\delta_{q+1}^{\sfrac12} \overline{C}\sum_{k\neq 0} \frac{1}{|k|^5}\leq \frac{M}{32}\delta_{q+1}^{\sfrac12} .
\end{equation}
To estimate $\|w_o\|_1\,,$ we first observe that 
$$
\left\| \nabla\left( e^{i\lambda_{q+1}k\cdot \Phi_i}\right)\right\|_0\leq \lambda_{q+1}|k|\|\nabla \Phi_i\|_0\leq 2\lambda_{q+1}|k|.
$$
Compute now
$$
\nabla w_{o,i}=\sum_{k\neq 0} \nabla \Phi_i^{-1} b_{i,k}  \nabla\left( e^{i\lambda_{q+1}k\cdot \Phi_i}\right)+\sum_{k\neq 0} \nabla \left(\nabla \Phi_i^{-1} b_{i,k} \right)e^{i\lambda_{q+1}k\cdot \Phi_i}.
$$
In particular, from Lemma \ref{l:gemetric_const} and Proposition \ref{p:ugly1} we infer
$$
\|\nabla w_o\|_0\leq 4\delta_{q+1}^{\sfrac12}\lambda_{q+1} \overline C \sum_{k\neq 0} \frac{1}{|k|^4}+C\delta_{q+1}^{\sfrac12}\ell^{-1} \sum_{k\neq 0} \frac{1}{|k|^5}\leq \frac{M}{16}\delta_{q+1}^{\sfrac12}\lambda_{q+1} +C \delta_{q+1}^{\sfrac12} \ell^{-1},
$$
for some constant $C$ which also depends on $M$. Thanks to the parameter inequality $\ell^{-1}\ll \lambda_{q+1}$ from  \eqref{ell_less_lambda_q+1}, by choosing $a\gg 1 $ sufficiently large, we get 
$$
\|\nabla w_o\|_0\leq\frac{M}{8}\delta_{q+1}^{\sfrac12}\lambda_{q+1},
$$
which, together with \eqref{e:wo_0}, gives \eqref{e:w_o_est}. As a consequence of \eqref{e:c_k_est}, we also obtain \eqref{e:w_c_est}. Finally, estimate \eqref{e:w_est} follows by putting together \eqref{e:w_o_est} and \eqref{e:w_c_est} and using again $\ell^{-1}\ll \lambda_{q+1}\,.$
\end{proof}

We denote by $D_{t,q}:=\partial_t+\overline v_q \cdot \nabla $ the advective derivative with respect to $\overline v_q$. We have

\begin{proposition}\label{p:ugly2}
For $t\in \supp \eta_p^i$ and every $N\geq 0$ we have 
\begin{align}
\norm{ D_{t,q} \nabla\Phi_i}_N  &\lesssim\delta_q^{\sfrac12}\lambda_q \ell^{-N} \,,\label{e:Dtq_phi_N}\\
\norm{D_{t,q} \tilde R_{q,i}}_N &\lesssim \delta_q^{\sfrac12}\lambda_q^{1+\gamma}\ell^{-N-2\alpha}\,,\label{e:Dtq_tR_est}\\
\norm{D_{t,q} c_{i,k}}_N &\lesssim \delta_{q+1}^{\sfrac12}\delta_q^{\sfrac12}\lambda_q^{1+\gamma}\lambda_{q+1}^{-1}\ell^{-N-1-3\alpha}|k|^{-6}\,,\label{e:Dtq_c_k_est}\\
\norm{D_{t,q} b_{i,k}}_N &\lesssim \delta_{q+1}^{\sfrac12} \delta_q^{\sfrac12}\lambda_q^{1+\gamma}\ell^{-N-3\alpha}|k|^{-6}\,.\label{e:Dtq_b_k_est}
\end{align}
\end{proposition}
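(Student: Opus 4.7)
My plan is to treat the four estimates in cascade, each feeding into the next via product and chain rule for the first-order operator $D_{t,q}$.

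First I would handle \eqref{e:Dtq_phi_N}. Differentiating the flow equation $D_{t,q}\Phi_i = 0$ in space yields the pointwise identity $D_{t,q}\nabla\Phi_i = -(\nabla\overline v_q)\,\nabla\Phi_i$. The Hölder product estimate, combined with the velocity bound \eqref{e:pglueing6} and the already-established \eqref{e:phi_N}, then gives
\[
\norm{D_{t,q}\nabla\Phi_i}_N \lesssim \norm{\overline v_q}_{N+1}\norm{\nabla \Phi_i}_0 + \norm{\overline v_q}_1 \norm{\nabla \Phi_i}_N \lesssim \delta_q^{\sfrac12}\lambda_q \ell^{-N}.
\]

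Next, for \eqref{e:Dtq_tR_est} I would expand $D_{t,q}$ in the defining formula \eqref{d:tildeRqi} using Leibniz. The three resulting terms carry a factor $D_{t,q}\nabla\Phi_i$, $D_{t,q}\overline R_q$ and $D_{t,q}\nabla \Phi_i^T$ respectively. Using the Hölder product rule together with the estimate just proved, the bounds \eqref{e:pglueing7}--\eqref{e:pglueing8} on $\overline R_q$ and its material derivative, and \eqref{e:phi_N}, the dominant contribution is the one carrying $D_{t,q}\overline R_q$: dividing by $\delta_{q+1}$ it is bounded by $\delta_{q+1}^{-1}\norm{D_{t,q}\overline R_q}_N \lesssim \delta_q^{\sfrac12}\lambda_q^{1+\gamma}\ell^{-N-2\alpha}$, as required. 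The two ``$D_{t,q}\nabla\Phi_i$'' terms are strictly smaller: the piece coming from $\delta_{q+1}I$ contributes only $\delta_q^{\sfrac12}\lambda_q\ell^{-N}$, and the piece coming from $\overline R_q$ carries an extra factor $\norm{\overline R_q}_0/\delta_{q+1} \lesssim \ell^\alpha$.

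Finally, for \eqref{e:Dtq_c_k_est} and \eqref{e:Dtq_b_k_est} I would apply $D_{t,q}$ to the definitions of $b_{i,k}$ and $c_{i,k}$. Because $a_k$ depends only on $\tilde R_{q,i}$, the chain rule in the material derivative gives $D_{t,q}(a_k\circ \tilde R_{q,i}) = Da_k(\tilde R_{q,i}) \colon D_{t,q}\tilde R_{q,i}$, with no commutator terms. This leaves two types of contributions: those where $D_{t,q}$ hits the cutoff $\eta_p^i$, producing $\partial_t\eta_p^i$ with size $\tau_{q+1}^{-1} = \delta_q^{\sfrac12}\lambda_q^{1+\gamma+3\alpha}$ by \eqref{est_cutoff_pert}; and those where $D_{t,q}$ hits $a_k$ or $\nabla\Phi_i^T$, which are controlled by \eqref{e:Dtq_tR_est}, \eqref{e:Dtq_phi_N} and the smoothness estimate \eqref{e:a_k_est} applied with $m = 6$. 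Combining these with \eqref{e:tR_est} and \eqref{e:phi_N} yields the claimed bounds, provided one converts the loss $\lambda_q^{3\alpha}$ coming from $\tau_{q+1}^{-1}$ into $\ell^{-3\alpha}$ by means of the parameter inequality $\lambda_q^{-1}\gtrsim \ell$ from \eqref{l:bound}.

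The main obstacle is bookkeeping rather than substance: one must check in every term that the cutoff loss $\tau_{q+1}^{-1}$, which is precisely the source of the extra $\lambda_q^\gamma$ factor compared to Proposition \ref{p:ugly1}, never exceeds the target right-hand side. This is where the definition $\tau_{q+1} = \lambda_q^{-\gamma}\theta_{q+1}$ is used essentially; once this is observed, all remaining contributions from Leibniz and the chain rule are absorbed using \eqref{l:bound} together with the gluing estimates from the previous section.
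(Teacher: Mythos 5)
Your proposal follows the paper's proof almost line by line: the identity $D_{t,q}\nabla\Phi_i = -\nabla\Phi_i\,\nabla\overline v_q$ from the flow equation, the Leibniz expansion of $D_{t,q}\tilde R_{q,i}$ with the dominant contribution coming from $\delta_{q+1}^{-1}\norm{D_{t,q}\overline R_q}_N$, and the final step of estimating $D_{t,q}b_{i,k}$ and $D_{t,q}c_{i,k}$ via Leibniz, chain rule, and the cutoff bound $\norm{\partial_t\eta_p^i}_0 \lesssim \tau_{q+1}^{-1} = \delta_q^{\sfrac12}\lambda_q^{1+\gamma+3\alpha}$, absorbing the resulting $\lambda_q^{3\alpha}$ loss into $\ell^{-3\alpha}$ via \eqref{l:bound}. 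This is exactly what the paper does.

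One minor imprecision: for $c_{i,k}$ the differentiated object is $\nabla a_k(\tilde R_{q,i})$, not $a_k(\tilde R_{q,i})$, so applying $D_{t,q}$ \emph{does} produce a commutator with the spatial gradient. The paper handles this explicitly via $D_{t,q}\nabla\tilde R_{q,i} = \nabla(D_{t,q}\tilde R_{q,i}) - \nabla\overline v_q\nabla\tilde R_{q,i}$; your claim of ``no commutator terms'' applies only to the chain rule on $a_k$ itself. The commutator term $\nabla\overline v_q\nabla\tilde R_{q,i}$ is of size $\delta_q^{\sfrac12}\lambda_q\ell^{-N-1}$, hence strictly subdominant relative to $\delta_q^{\sfrac12}\lambda_q^{1+\gamma}\ell^{-N-1-2\alpha}$, so the final estimate is unaffected — but it must be mentioned to make the proof of \eqref{e:Dtq_c_k_est} complete.
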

\begin{proof}Observe that $D_{t,q} \nabla \Phi_i=-\nabla \Phi_i D\overline v_q$. Thus, from \eqref{e:pglueing6} and \eqref{e:phi_N} we get
$$
\|D_{t,q}\nabla \Phi_i\|_N\lesssim \|\nabla\Phi_i\|_0\|\overline v_q\|_{N+1}+\|\nabla\Phi_i\|_N\|\overline v_q\|_1\lesssim \delta_q^{\sfrac12}\lambda_q\ell^{-N}.
$$
Differentiating \eqref{d:tildeRqi} yields
\begin{align*}
D_{t,q}\tilde R_{q,i}=D_{t,q}\nabla \Phi_i \left(\Id-\frac{\overline R_q}{\delta_{q+1}} \right) \nabla\Phi_i^T-\nabla \Phi_i\frac{D_{t,q}\overline R_q}{\delta_{q+1}}\nabla \Phi_i^T+\nabla \Phi_i\left(\Id-\frac{\overline R_q}{\delta_{q+1}} \right)D_{t,q} \nabla\Phi_i^T.
\end{align*}
Then, by \eqref{e:pglueing7}, \eqref{e:pglueing8},  \eqref{e:phi_N} and \eqref{e:Dtq_phi_N} we get
\begin{align*}
\left\|D_{t,q} \tilde R_{q,i}\right\|_N&\lesssim \|D_{t,q} \nabla \Phi_i\|_N +\left\| D_{t,q}\nabla\Phi_i\right\|_0\left(\frac{\left\| \overline R_q\right\|_N}{\delta_{q+1}}+\left\|\nabla\Phi_i^T\right\|_N \right)\\
&+\|\nabla \Phi_i\|_N \frac{\left\| D_{t, q} \overline R_q\right\|_0}{\delta_{q+1}}+\frac{\left\|D_{t,q} \overline R_q\right\|_N}{\delta_{q+1}}\\
&\lesssim \delta_q^{\sfrac12}\lambda_q\ell^{-N} +\delta_q^{\sfrac12}\lambda_q^{1+\gamma} \ell^{-N-2\alpha}\lesssim \delta_q^{\sfrac12}\lambda_q^{1+\gamma} \ell^{-N-2\alpha},
\end{align*}
which gives \eqref{e:Dtq_tR_est}. Compute now
\begin{align*}
D_{t,q}c_{i,k}&=-i\frac{\delta_{q+1}^{\sfrac12}}{\lambda_{q+1}}\left(\partial_t\eta_p^i \nabla \left(a_k\left(\tilde R_{q,i}\right)\right)+\eta_p^iD_{t,q}\nabla \left(a_k\left(\tilde R_{q,i}\right)\right) \right) \times \frac{\nabla\Phi_i^T(k\times A_k)}{|k|^2}\\
&-i\frac{\delta_{q+1}^{\sfrac12}}{\lambda_{q+1}}\eta_p^i\nabla \left(a_k\left(\tilde R_{q,i}\right)\right)\times  \frac{D_{t,q}\nabla\Phi_i^T(k\times A_k)}{|k|^2}.
\end{align*}
Writing $D_{t, q} \nabla \tilde R_{q,i} = \nabla (D_{t, q} \tilde R_{q,i}) - \nabla \overline v_q \nabla \tilde R_{q,i}$, we have by Proposition \ref{p:ugly1}, \eqref{est_cutoff_pert}, the previous two estimates \eqref{e:Dtq_phi_N} --\eqref{e:Dtq_tR_est} and \eqref{e:pglueing6} that 
\begin{align*}
\frac{|k|^6\lambda_{q+1}}{\delta_{q+1}^{\sfrac12}}\|D_{t,q} c_{i,k}\|_N&\lesssim \norm{\partial_t\eta_p^i}_0\norm{ \tilde R_{q,i}}_{N+1}+\norm{D_{t,q} \nabla \tilde R_{q,i}}_{N}\\
&+\left(\norm{\partial_t\eta_p^i}_0 \norm{\tilde R_{q,i}}_1+\norm{D_{t,q} \nabla \tilde R_{q,i}}_{0} \right) \norm{\nabla \Phi_i^T}_N\\
& +\norm{\tilde R_{q,i}}_{N+1}\norm{D_{t,q} \nabla \Phi_i^T}_0+\norm{\tilde R_{q,i}}_{1}\norm{D_{t,q} \nabla \Phi_i^T}_N\\
&\lesssim \tau_{q+1}^{-1}\ell^{-N-1}+\delta_q^{\sfrac12}\lambda_q^{1+\gamma}\ell^{-N-1-2\alpha}\\
&+\left( \tau_{q+1}^{-1}\ell^{-1}+\delta_q^{\sfrac12}\lambda_q^{1+\gamma}\ell^{-1-2\alpha}\right)\ell^{-N}+\ell^{-N-1}\delta_q^{\sfrac12}\lambda_q\\
&\lesssim  \tau_{q+1}^{-1}\ell^{-N-1}+\delta_q^{\sfrac12}\lambda_q^{1+\gamma}\ell^{-N-1-2\alpha}\\
&=\delta_q^{\sfrac12}\lambda_q^{1+\gamma}\left( \lambda_q^{3\alpha}+\ell^{-2\alpha}\right) \ell^{-N-1}\lesssim \delta_q^{\sfrac12}\lambda_q^{1+\gamma}\ell^{-N-1-3\alpha},
\end{align*}
where in the last inequality we also used that $\lambda_q^{3\alpha}\leq \ell^{-3\alpha}$ from \eqref{l:bound}.

With similar computations we also get \eqref{e:Dtq_b_k_est}. Indeed
\begin{align*}
\norm{D_{t,q} b_{i,k}}_N&\lesssim\frac{\delta_{q+1}^{\sfrac12}}{|k|^6}\left(\norm{\partial_t \eta_p^i}_0\norm{\tilde R_{q,i}}_N+\norm{D_{t,q}\tilde R_{q,i}}_N \right)\\
&\lesssim \frac{\delta_{q+1}^{\sfrac12}}{|k|^6}\left( \tau_{q+1}^{-1}\ell^{-N}+\delta_q^{\sfrac12}\lambda_q^{1+\gamma} \ell^{-N-2\alpha}\right)\lesssim\delta_{q+1}^{\sfrac12}\delta_q^{\sfrac12}\lambda_q^{1+\gamma} \ell^{-N-3\alpha}|k|^{-6}.
\end{align*}
\end{proof}

\subsection{Estimate  on the new Reynolds stress} In this final section, we prove our last estimate \eqref{R_q+1} in order to conclude the proof of Proposition \ref{p:perturbation}.
\begin{proposition}\label{p:finalR}
The Reynolds stresses $R_{nash}$, $R_{osc}$ and $R_{transp}$ defined in \eqref{d:R_q+1} satisfy
\begin{align}
\|R_{nash}\|_\alpha&\lesssim \frac{\delta_{q+1}^{\sfrac12}\delta_q^{\sfrac12}\lambda_q}{\lambda_{q+1}^{1-2\alpha}},\label{e:Rnash} \\
\|R_{osc}\|_\alpha&\lesssim \frac{\delta_{q+1}^{\sfrac12}\delta_q^{\sfrac12}\lambda_q^{1+\sfrac{\gamma}{2}}}{\lambda_{q+1}^{1-4\alpha}},\label{e:Rosc} \\
\|R_{transp}\|_\alpha & \lesssim  \frac{\delta_{q+1}^{\sfrac12}\delta_q^{\sfrac12}\lambda_q^{1+\gamma}}{\lambda_{q+1}^{1-5\alpha}},  \label{e:Rtransp.}
\end{align}
for $\alpha >0$ sufficiently small and $a\gg 1$ large enough. In particular, \eqref{R_q+1} holds. 
\end{proposition}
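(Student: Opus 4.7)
The plan is to treat the three pieces of $R_{q+1}$ separately and exploit that each summand of $w_{q+1}$ is of the form $A(x,t) e^{i\lambda_{q+1} k\cdot \Phi_i(x,t)}$. Applied to the divergence of such a highly oscillatory object, the inverse divergence $\mathcal{R}$ gains a factor $\lambda_{q+1}^{-1}$ via the stationary-phase estimate (the same one invoked in the heuristic preceding \eqref{temp_error}), up to $\ell^{-O(\alpha)}$ losses that we will absorb via \eqref{ell_less_lambda_q+1} by taking $\alpha$ small. The amplitude and material-derivative bounds will all come from Propositions \ref{p:ugly1}--\ref{p:ugly2} together with \eqref{e:pglueing6}, and the $|k|^{-6}$ decay in \eqref{e:a_k_est}--\eqref{e:Ck_ind} will ensure absolute summability in $k\in\Z^3\setminus\{0\}$.

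The estimate \eqref{e:Rnash} for $R_{nash}$ is the most direct: expanding $w_{q+1}\cdot\nabla\overline v_q$ into Fourier modes, the amplitude is bounded in every $C^{N+\alpha}$ by $\delta_{q+1}^{\sfrac{1}{2}}\delta_q^{\sfrac{1}{2}}\lambda_q |k|^{-6}\ell^{-N-\alpha}$, and stationary phase yields the $\lambda_{q+1}^{-1}$ gain. For $R_{transp}$ the key observation is the transport identity \eqref{rel_div_free}: writing $b_{i,k}=\tilde b_{i,k} A_k$ with scalar $\tilde b_{i,k}$, we have $w_{o,i}=\sum_k \tilde b_{i,k}\,U_{i,k}$ and therefore
\begin{equation*}
D_{t,q} w_o = \sum_{i,k\neq 0}(D_{t,q}\tilde b_{i,k})\,U_{i,k} \;+\; \sum_{i,k\neq 0}\tilde b_{i,k}\,\nabla\overline v_q^{T} U_{i,k},
\end{equation*}
while $D_{t,q} w_c = \sum_{i,k\neq 0}(D_{t,q} c_{i,k})\,e^{i\lambda_{q+1} k\cdot \Phi_i}$ because $D_{t,q}\Phi_i=0$. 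The $\tilde b_{i,k}\nabla\overline v_q^{T} U_{i,k}$ piece is Nash-like and $D_{t,q}w_c$ is subdominant by the extra $\lambda_{q+1}^{-1}\ell^{-1}$ built into \eqref{e:c_k_est}, so the critical contribution is the first sum, whose amplitude carries the $\lambda_q^{\gamma}$ factor from \eqref{e:Dtq_b_k_est} — precisely the loss pinpointed in Section \ref{sec_gap}. Stationary phase then produces \eqref{e:Rtransp.}.

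The heart of the argument is the algebraic cancellation for $R_{osc}$. Since $\{\eta_p^i\}$ have pairwise disjoint supports, $w_o\otimes w_o = \sum_i w_{o,i}\otimes w_{o,i}$, and combining the Mikado identity \eqref{e:Mikado_stationarity} with the defining relation \eqref{d:tildeRqi}, which gives $\delta_{q+1}\nabla\Phi_i^{-1}\tilde R_{q,i}(\nabla\Phi_i^{-1})^{T}=\delta_{q+1}\Id-\overline R_q$, one obtains
\begin{equation*}
w_o\otimes w_o = \sum_i (\eta_p^i)^2\big(\delta_{q+1}\Id-\overline R_q\big) + \sum_{i,k\neq 0} (\eta_p^i)^2\delta_{q+1}\,\nabla\Phi_i^{-1}\, C_k(\tilde R_{q,i})\,(\nabla\Phi_i^{-1})^{T} e^{i\lambda_{q+1}k\cdot\Phi_i}.
\end{equation*}
The crucial step is the identity $\sum_i(\eta_p^i)^2\overline R_q=\overline R_q$, which holds because $\supp\overline R_q\subset\bigcup_i I_i$ (property \ref{support} of Section \ref{s:glueingandloc}) and $\eta_p^i\equiv 1$ on $I_i$ (Lemma \ref{l:cutoff_pert}). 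Hence $w_o\otimes w_o+\overline R_q$ equals the spatially constant matrix $\big(\sum_i(\eta_p^i)^2\big)\delta_{q+1}\Id$ plus a high-frequency remainder of amplitude $\lesssim \delta_{q+1}\ell^{-O(\alpha)}$. Applying $\mathcal R\div$ and stationary phase yields a bound of order $\lambda_{q+1}^{-1}\delta_{q+1}\ell^{-1}\lambda_{q+1}^{O(\alpha)}$, which by the choice of $\ell$ in \eqref{e:l} equals the right-hand side of \eqref{e:Rosc}. The cross terms $w_o\otimes w_c$, $w_c\otimes w_o$ and $w_c\otimes w_c$ are subdominant by \eqref{e:c_k_est}.

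The main obstacle is the $R_{osc}$ cancellation: it relies both on the tailored choice of $\tilde R_{q,i}$ in \eqref{d:tildeRqi} and on the exact compatibility between the temporal supports of $\{\eta_p^i\}$ and of $\overline R_q$ coming from the gluing step. Once this cancellation is verified, the three estimates reduce to a systematic application of the stationary-phase lemma, Propositions \ref{p:ugly1}--\ref{p:ugly2}, and the parameter inequality \eqref{ell_less_lambda_q+1}, taking $\alpha$ small enough to absorb every $\ell^{-O(\alpha)}$ factor below $\lambda_{q+1}^{5\alpha}$.
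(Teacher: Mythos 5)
Your proposal is correct and follows essentially the same route as the paper: the same decomposition of $R_{q+1}$ into three parts, the same use of the transport identity \eqref{rel_div_free} to split the transport error into a Nash-like piece and a $D_{t,q}b_{i,k}$ piece (the latter carrying the $\lambda_q^\gamma$ loss), the same treatment of $\mathcal O_2$ as subdominant, and the same algebraic cancellation for the leading oscillation term driven by the defining relation $\delta_{q+1}\nabla\Phi_i^{-1}\tilde R_{q,i}\nabla\Phi_i^{-T}=\delta_{q+1}\Id-\overline R_q$ combined with $\sum_i(\eta_p^i)^2\equiv 1$ on $\supp\overline R_q$. The only cosmetic difference is that you absorb $\overline R_q$ directly rather than subtracting the harmless constant $\delta_{q+1}\Id$ before applying $\mathcal R\div$, but these are equivalent since $\div(\delta_{q+1}\Id)=0$.
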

\begin{proof}[Proof of \eqref{e:Rnash}]
We rewrite the term $R_{nash}$ as 
$$
\mathcal R (w_{q+1}\cdot \nabla v_q)=\sum_{i,k\neq 0} \mathcal R \left(\left(\nabla \Phi_i^{-1}b_{i,k} e^{i\lambda_{q+1}k\cdot \Phi_i} + c_{i,k}e^{i\lambda_{q+1}k \cdot \Phi_i}\right) \cdot \nabla \overline v_q\right).
$$
Using Proposition \ref{p:R:oscillatory_phase} and Proposition \ref{p:ugly1}, we estimate
\begin{align*}
&\left\| \mathcal R\left(\nabla \Phi_i^{-1}b_{i,k} e^{i\lambda_{q+1}k\cdot \Phi_i}  \cdot \nabla \overline v_q\right) \right\|_\alpha \\
&\lesssim \frac{\|\nabla\Phi_i^{-1} b_{i,k} \cdot \nabla \overline v_q\|_0}{\lambda_{q+1}^{1-\alpha}}+\frac{\|\nabla\Phi_i^{-1} b_{i,k} \cdot \nabla \overline  v_q\|_{N+\alpha}+\|\nabla\Phi_i^{-1} b_{i,k} \cdot \nabla \overline v_q\|_0\|\Phi_i\|_{N+\alpha}}{\lambda_{q+1}^{N-\alpha}}\\
&\lesssim  \frac{\delta_{q+1}^{\sfrac12}\delta_q^{\sfrac12}\lambda_q}{\lambda_{q+1}^{1-\alpha}|k|^6}+ \frac{\delta_{q+1}^{\sfrac12}\delta_q^{\sfrac12}\lambda_q}{\lambda_{q+1}^{N-\alpha}\ell^{N+\alpha}|k|^6}\lesssim  \frac{\delta_{q+1}^{\sfrac12}\delta_q^{\sfrac12}\lambda_q}{\lambda_{q+1}^{1-2\alpha}|k|^6}\left(1+\frac{\lambda_{q+1}}{\left(\lambda_{q+1}\ell\right)^N} \right),
\end{align*}
where in the last inequality we also used that $\ell^{-\alpha}\leq \lambda_{q+1}^{\alpha}$ by \eqref{ell_less_lambda_q+1}. We claim that, by choosing $N\gg 1$ sufficiently large (depending on $\beta, \gamma, \alpha, b$), then it holds
\begin{equation}\label{param_ineq_N}
\frac{\lambda_{q+1}}{\left(\lambda_{q+1}\ell\right)^N}\leq 1,
\end{equation}
for $a\gg 1$ sufficiently large. Indeed we have 
$$
\frac{\lambda_{q+1}}{\left(\lambda_{q+1}\ell\right)^N}\lesssim a^{b-N(b+\beta-1-\gamma-3\alpha-\beta b)}.
$$
Thus, there exists $N$ large enough such that \eqref{param_ineq_N} holds, if $b+\beta-1-\gamma-3\alpha-\beta b>0$, which is equivalent to
\begin{equation}\label{param_relat_nash}
\gamma+3\alpha<b-1-\beta(b-1)=(b-1)(1-\beta).
\end{equation}
Since the right hand side in \eqref{param_relat_nash} is strictly larger than the upper bound on $\gamma$ in \eqref{gamma:bound2}, we conclude that \eqref{param_relat_nash} (and so \eqref{param_ineq_N}) holds if $\alpha>0$ is sufficiently small. Hence we achieved
$$
\left\| \mathcal R\left(\nabla \Phi_i^{-1}b_{i,k} e^{i\lambda_{q+1}k\cdot \Phi_i}  \cdot \nabla \overline v_q\right) \right\|_\alpha\lesssim  \frac{\delta_{q+1}^{\sfrac12}\delta_q^{\sfrac12}\lambda_q}{\lambda_{q+1}^{1-2\alpha}|k|^6}.
$$
Due to the fact that the estimates on the coefficients $c_{i,k}$ from Proposition \ref{p:ugly1} are better than the ones on the $b_{i,k}$ by \eqref{ell_less_lambda_q+1}, we also get that
$$
\left\| \mathcal R\left(c_{i,k} e^{i\lambda_{q+1}k\cdot \Phi_i}  \cdot \nabla \overline v_q\right) \right\|_\alpha\lesssim  \frac{\delta_{q+1}^{\sfrac12}\delta_q^{\sfrac12}\lambda_q}{\lambda_{q+1}^{1-2\alpha}|k|^6}.
$$
Finally, summing over all the frequencies $k\neq 0$, we conclude the desired estimate \eqref{e:Rnash}.
\end{proof}

\begin{proof}[Proof of \eqref{e:Rosc}]
We write the oscillation error as follows
\begin{align*}
R_{osc}&=\mathcal{R} \div (\overline R_q + w_{q+1}\otimes w_{q+1})
\\
&=\mathcal R \div (\overline R_q -\delta_{q+1}\Id + w_o\otimes w_o)+ \mathcal{R}\div (w_o\otimes w_c +w_c\otimes w_o+w_c\otimes w_c)\\
&=: \mathcal O_1+\mathcal O_2.
\end{align*}
Since, by Schauder estimates, the operator $\mathcal{R}\div :C^\alpha(\T^3)\rightarrow C^\alpha(\T^3)$ is bounded, we deduce by using \eqref{e:w_o_est},  \eqref{e:w_c_est} and \eqref{ell_less_lambda_q+1}
\begin{align*}
\|\mathcal O_2\|_\alpha&\lesssim \|w_o\|_\alpha \|w_c\|_\alpha+ \|w_c\|^2_\alpha\lesssim \frac{\delta_{q+1}\lambda_{q+1}^{2\alpha}}{\lambda_{q+1}\ell}+\frac{\delta_{q+1}\lambda_{q+1}^{2\alpha}}{(\lambda_{q+1}\ell)^2}\\
&\lesssim   \frac{\delta_{q+1}\lambda_{q+1}^{2\alpha}}{\lambda_{q+1}\ell}=\frac{\delta_{q+1}^{\sfrac12}\delta_q^{\sfrac12}\lambda_q^{1+\sfrac{\gamma}{2}+\sfrac{3\alpha}{2}}}{\lambda_{q+1}^{1-2\alpha}}\leq \frac{\delta_{q+1}^{\sfrac12}\delta_q^{\sfrac12}\lambda_q^{1+\sfrac{\gamma}{2}}}{\lambda_{q+1}^{1-4\alpha}}.
\end{align*}
Thus, to conclude the desired estimate on $R_{osc}$ we are only left with $\mathcal{O}_1$. Since by Lemma \ref{l:cutoff_pert} the supports of $\eta_p^i$ are disjoint and $ \sum_i \left(\eta_p^i\right)^2\equiv 1$ on the $\supp \overline R_q$, we have
$$
\mathcal{O}_1=\mathcal R \div  \sum_i  \left( \left(\eta_p^i\right)^2\left(\overline R_q-\delta_{q+1}\Id\right) +w_{o,i}\otimes w_{o,i}\right).
$$
Using \eqref{e:Mikado_stationarity}, we can write
\begin{align*}
w_{o,i}\otimes w_{o,i}&= \delta_{q+1}\left(\eta_p^i\right)^2 \nabla \Phi_i^{-1}\left( W\otimes W\right) (\tilde R_{q,i},\lambda_{q+1}\Phi_i)\nabla \Phi_i^{-T}\\
&=\delta_{q+1}\left(\eta_p^i\right)^2 \nabla \Phi_i^{-1}\tilde R_{q,i}\nabla \Phi_i^{-T}+\sum_{k\neq 0}\delta_{q+1}\left(\eta_p^i\right)^2 \nabla \Phi_i^{-1} C_k(\tilde R_{q,i})  \nabla \Phi_i^{-T} e^{i\lambda_{q+1}k\cdot \Phi_i}\\
&=\left(\eta_p^i\right)^2(\delta_{q+1}\Id-\overline R_q)+ \sum_{k\neq 0}\delta_{q+1}\left(\eta_p^i\right)^2 \nabla \Phi_i^{-1} C_k(\tilde R_{q,i})  \nabla \Phi_i^{-T} e^{i\lambda_{q+1}k\cdot \Phi_i},
\end{align*}
from which we deduce 
$$
\mathcal{O}_1=\mathcal R \div \left( \sum_{i,k\neq 0}\delta_{q+1}\left(\eta_p^i\right)^2 \nabla \Phi_i^{-1} C_k(\tilde R_{q,i})  \nabla \Phi_i^{-T} e^{i\lambda_{q+1}k\cdot \Phi_i} \right).
$$
Also, recalling \eqref{e:Ck_ind} 
$$
\nabla \Phi_i^{-1} C_k \nabla \Phi_i^{-T}\nabla \Phi_i^T k=0,
$$
and consequently 
$$
\mathcal{O}_1=\mathcal R \left( \sum_{i,k\neq 0}\delta_{q+1}\left(\eta_p^i\right)^2  \div\left( \nabla \Phi_i^{-1} C_k(\tilde R_{q,i})  \nabla \Phi_i^{-T}\right) e^{i\lambda_{q+1}k\cdot \Phi_i} \right).
$$
Thus, again by Proposition \ref{p:R:oscillatory_phase} and Proposition \ref{p:ugly1},  we estimate on $\supp \eta_p^i$
\begin{align*}
&\left\| \mathcal R \left(  \div\left( \nabla \Phi_i^{-1} C_k(\tilde R_{q,i})  \nabla \Phi_i^{-T}\right) e^{i\lambda_{q+1}k\cdot \Phi_i} \right)\right\|_\alpha \lesssim \frac{\left\|  \div\left( \nabla \Phi_i^{-1} C_k(\tilde R_{q,i})  \nabla \Phi_i^{-T}\right)  \right\|_0}{\lambda_{q+1}^{1-\alpha}}\\
&+\frac{    \left\|  \div\left( \nabla \Phi_i^{-1} C_k(\tilde R_{q,i})  \nabla \Phi_i^{-T}\right)  \right\|_{N+\alpha}+\left\| \div\left( \nabla \Phi_i^{-1} C_k(\tilde R_{q,i})  \nabla \Phi_i^{-T}\right)  \right\|_0\|\Phi_i\|_{N+\alpha}               }{\lambda_{q+1}^{N-\alpha}}\\
&\lesssim  \frac{1}{\ell^{1+\alpha}\lambda_{q+1}^{1-\alpha}|k|^6} + \frac{1}{\lambda_{q+1}^{N-\alpha} \ell^{N+1+\alpha} \lvert k \rvert^6 } \lesssim \frac{1}{\ell\lambda_{q+1}^{1-2\alpha}|k|^6},
\end{align*}
where we have again chosen $N\gg1$ large enough to get the desired estimate, together with $\ell^{-\alpha}\leq \lambda_{q+1}^\alpha$ (see \eqref{ell_less_lambda_q+1}), for the last inequality. By summing over $k\neq 0$ we conclude, reusing \eqref{ell_less_lambda_q+1}, that
$$
\|\mathcal O_1\|_\alpha\lesssim \frac{\delta_{q+1}}{\ell\lambda_{q+1}^{1-2\alpha}}\lesssim \frac{\delta_{q+1}^{\sfrac12}\delta_q^{\sfrac12}\lambda_q^{1+\sfrac{\gamma}{2}}}{\lambda_{q+1}^{1-4\alpha}}.
$$
\end{proof}
\begin{proof}[Proof of \eqref{e:Rtransp.}] We start by splitting the transport error into two parts
$$
\mathcal R \left(\left( \partial_t+\overline v_q\cdot \nabla \right)w_{q+1}\right)=\mathcal R \left(\left( \partial_t+\overline v_q\cdot \nabla \right)w_{o}\right)+\mathcal R \left(\left( \partial_t+\overline v_q\cdot \nabla \right)w_{c}\right)=:\mathcal T_1+\mathcal T_2.
$$
We start with $\mathcal T_1$. Applying \eqref{rel_div_free} yields
\begin{align}\label{split_transport_principal}
\left( \partial_t+\overline v_q\cdot \nabla \right)w_{o}=\sum_{i,k\neq 0} \left(\nabla \overline v_q \right)^T \nabla \Phi_i^{-1}b_{i,k} e^{i\lambda_{q+1} k\cdot \Phi_i}+\sum_{i,k\neq 0}\nabla \Phi_i^{-1}D_{t,q}b_{i,k} e^{i\lambda_{q+1} k\cdot \Phi_i}.
\end{align}
We now apply Proposition \ref{p:R:oscillatory_phase}, together with Proposition \ref{p:ugly1}, to obtain 
\begin{equation}\label{est_1_principal}
\begin{split}
&\norm{\mathcal{R}\left(\left( \nabla \overline v_q\right)^T \nabla \Phi_i^{-1} b_{i,k} e^{i\lambda_{q+1}k\cdot \Phi_i} \right)}_\alpha\\
&\lesssim \frac{\left\| \left( \nabla \overline v_q\right)^T \nabla \Phi_i^{-1} b_{i,k}  \right\|_0}{\lambda_{q+1}^{1-\alpha}}+\frac{\left\| \left( \nabla \overline v_q\right)^T \nabla \Phi_i^{-1} b_{i,k}  \right\|_{N+\alpha}+\left\| \left( \nabla \overline v_q\right)^T \nabla \Phi_i^{-1} b_{i,k}  \right\|_0\|\Phi_i\|_{N+\alpha}}{\lambda_{q+1}^{N-\alpha}}\\
&\lesssim \frac{\delta_{q+1}^{\sfrac12}\delta_q^{\sfrac12}\lambda_q}{\lambda_{q+1}^{1-\alpha}|k|^6}+ \frac{\delta_{q+1}^{\sfrac12}\delta_q^{\sfrac12}\lambda_q}{\lambda_{q+1}^{N-\alpha}\ell^{N+\alpha}|k|^6}\lesssim  \frac{\delta_{q+1}^{\sfrac12}\delta_q^{\sfrac12}\lambda_q}{\lambda_{q+1}^{1-2\alpha}|k|^6},
\end{split}
\end{equation}
where in the last inequality we have again chosen $\alpha>0$ sufficiently small and $N\gg 1$ large enough.

The estimate on the second term in \eqref{split_transport_principal} follows by Proposition \ref{p:R:oscillatory_phase} and Proposition \ref{p:ugly2}. Indeed we have
\begin{equation}\label{est_2_principal}
\begin{split}
&\norm{\mathcal{R}\left(\nabla \Phi_i^{-1}D_{t,q}b_{i,k} e^{i\lambda_{q+1} k\cdot \Phi_i} \right)}_\alpha\\
&\lesssim \frac{\left\|\nabla \Phi_i^{-1}D_{t,q}b_{i,k}  \right\|_0}{\lambda_{q+1}^{1-\alpha}}+\frac{\left\| \nabla \Phi_i^{-1}D_{t,q}b_{i,k} \right\|_{N+\alpha}+\left\| \nabla \Phi_i^{-1}D_{t,q}b_{i,k} \right\|_0\|\Phi_i\|_{N+\alpha}}{\lambda_{q+1}^{N-\alpha}}\\
&\lesssim \frac{\delta_{q+1}^{\sfrac12}\delta_q^{\sfrac12}\lambda_q^{1+\gamma}\ell^{-3\alpha}}{\lambda_{q+1}^{1-\alpha}|k|^6}+ \frac{\delta_{q+1}^{\sfrac12}\delta_q^{\sfrac12}\lambda_q^{1+\gamma}\ell^{-3\alpha}+\delta_{q+1}^{\sfrac12}\delta_q^{\sfrac12}\lambda_q^{1+\gamma}\ell^{1-3\alpha}}{\lambda_{q+1}^{N-\alpha}\ell^{N+\alpha}|k|^6}\\
&\lesssim  \frac{\delta_{q+1}^{\sfrac12}\delta_q^{\sfrac12}\lambda_q^{1+\gamma}\ell^{-4\alpha}}{\lambda_{q+1}^{1-\alpha}|k|^6}\lesssim  \frac{\delta_{q+1}^{\sfrac12}\delta_q^{\sfrac12}\lambda_q^{1+\gamma}}{\lambda_{q+1}^{1-5\alpha}|k|^6},
\end{split}
\end{equation}
where we also used $\ell^{-4\alpha}\leq \lambda_{q+1}^{4\alpha}$. Putting together \eqref{est_1_principal} and \eqref{est_2_principal}, summing over all the frequencies $k\neq 0$, we conclude 
\begin{equation}\label{est_T1}
\|\mathcal T_1\|_{\alpha}\lesssim  \frac{\delta_{q+1}^{\sfrac12}\delta_q^{\sfrac12}\lambda_q^{1+\gamma}}{\lambda_{q+1}^{1-5\alpha}}.
\end{equation}
To estimate $\mathcal T_2$ we observe that since $(\partial_t + \overline{v}_q \cdot \nabla) \Phi_i=0$ by choice of $\Phi_i$, we have
$$
\left( \partial_t+\overline v_q \cdot \nabla\right) w_c=\sum_{i,k\neq 0} \left( D_{t,q} c_{i,k}\right)e^{i\lambda_{q+1}k\cdot \Phi_i}.
$$
Then, applying once again Proposition \ref{p:R:oscillatory_phase}, we get
$$
\left\| \mathcal R\left( \left( D_{t,q} c_{i,k}\right) e^{i\lambda_{q+1}k\cdot \Phi_i}\right)\right\|_\alpha\lesssim  \frac{\left\|D_{t,q} c_{i,k} \right\|_0}{\lambda_{q+1}^{1-\alpha}}+\frac{\left\| D_{t,q} c_{i,k} \right\|_{N+\alpha}+\left\|D_{t,q} c_{i,k} \right\|_0\|\Phi_i\|_{N+\alpha}}{\lambda_{q+1}^{N-\alpha}}.
$$
Since from Proposition \ref{p:ugly2} the estimates on $D_{t,q} c_{i,k}$ are better than the ones for $D_{t,q} b_{i,k}$ (recall that $\ell^{-1}\leq \lambda_{q+1}$) we obtain, as for the estimate \eqref{est_2_principal}, that 
$$
\left\| \mathcal R\left( \left( D_{t,q} c_{i,k}\right) e^{i\lambda_{q+1}k\cdot \Phi_i}\right)\right\|_\alpha\lesssim  \frac{\delta_{q+1}^{\sfrac12}\delta_q^{\sfrac12}\lambda_q^{1+\gamma}}{\lambda_{q+1}^{1-5\alpha}|k|^6},
$$
from which, by summing over $k\neq 0$, we deduce 
\begin{equation}\label{est_T2}
\|\mathcal T_2\|_{\alpha}\lesssim  \frac{\delta_{q+1}^{\sfrac12}\delta_q^{\sfrac12}\lambda_q^{1+\gamma}}{\lambda_{q+1}^{1-5\alpha}}.
\end{equation}
Estimates \eqref{est_T1} and \eqref{est_T2} imply the validity of \eqref{e:Rtransp.} and this concludes the proof of Proposition \ref{p:finalR}.
\end{proof}

\appendix

\section{H\"older spaces}\label{s:hoelder}

In the following $m=0,1,2,\dots$, $\alpha\in (0,1)$, and $\theta$ is a multi-index. We introduce the usual (spatial) 
H\"older norms as follows.
First of all, the supremum norm is denoted by $\|f\|_0=\sup_{\T^3\times [0,T]}|f|$. We define the H\"older seminorms 
as
\begin{equation*}
\begin{split}
[f]_{m}&=\max_{|\theta|=m}\|D^{\theta}f\|_0\, ,\\
[f]_{m+\alpha} &= \max_{|\theta|=m}\sup_{x\neq y, t}\frac{|D^{\theta}f(x, t)-D^{\theta}f(y, t)|}{|x-y|^{\alpha}}\, ,
\end{split}
\end{equation*}
where $D^\theta$ are  space derivatives only.
The H\"older norms are then given by
\begin{eqnarray*}
\|f\|_{m}&=&\sum_{j=0}^m[f]_j,\\
\|f\|_{m+\alpha}&=&\|f\|_m+[f]_{m+\alpha}.
\end{eqnarray*}
Moreover, we write $[f (t)]_\alpha$ and $\|f (t)\|_\alpha$ when the time $t$ is fixed and the
norms are computed for the restriction of $f$ to the $t$-time slice.

Recall the following elementary inequalities
\begin{proposition}\label{p:interpolation}
Let $f, g$ be two smooth functions. For any $r\geq s\geq 0$ we have
\begin{align}
[fg]_{r}&\leq C\bigl([f]_r\|g\|_0+\|f\|_0[g]_r\bigr),\label{e:Holderproduct}\\
[f]_{s}&\leq C\|f\|_0^{1-\sfrac{s}{r}}[f]_{r}^{\sfrac{s}{r}}.\label{e:Holderinterpolation2}
\end{align}
\end{proposition}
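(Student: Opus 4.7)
The plan is to establish the interpolation inequality \eqref{e:Holderinterpolation2} first and then derive the product inequality \eqref{e:Holderproduct} from it by means of Leibniz's rule and Young's inequality.

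For the interpolation inequality, I would reduce to the scale-dependent form: for every $h>0$,
\[
[f]_s \leq C\bigl(h^{-s}\|f\|_0 + h^{r-s}[f]_r\bigr),
\]
from which \eqref{e:Holderinterpolation2} follows by optimizing in $h$ with the choice $h=(\|f\|_0/[f]_r)^{1/r}$. The scale-dependent form I would prove in two stages. For integer $s=m$ and real $r>m$ one uses a Taylor/finite-difference identity: for any unit direction $e$,
\[
hD_e D^{\theta} f(x) = D^{\theta}f(x+he) - D^{\theta}f(x) + O(h^{1+\rho}[f]_r),
\]
which, together with an induction on $m$ and an $L^\infty$ bound on the difference, yields $[f]_{m+1} \leq C(h^{-1}\|f\|_0 + h^{r-m-1}[f]_r)$. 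The fully fractional case $s=m+\alpha$ is handled by splitting the seminorm into $|x-y|\leq h$, where one invokes the $C^r$ bound, and $|x-y|>h$, where one uses only a lower-order bound, and then optimizing. This is standard once the integer case is in hand.

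For the product inequality, Leibniz's rule gives
\[
D^{\theta}(fg) = \sum_{\sigma \leq \theta} \binom{\theta}{\sigma} D^{\sigma}f \, D^{\theta-\sigma}g.
\]
The endpoint terms $|\sigma|=0$ and $|\sigma|=|\theta|$ produce exactly $\|f\|_0 [g]_r + [f]_r\|g\|_0$. For intermediate $|\sigma|=k$ with $0<k<r$ (first handling integer $r$), the interpolation inequality gives $\|D^\sigma f\|_0 \leq C\|f\|_0^{1-k/r}[f]_r^{k/r}$ and similarly for $g$, so that each cross term is bounded by $\|f\|_0^{1-k/r}[f]_r^{k/r} \|g\|_0^{k/r}[g]_r^{1-k/r}$; Young's inequality with exponents $r/k$ and $r/(r-k)$ then absorbs this into $\|f\|_0[g]_r + [f]_r\|g\|_0$. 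The fractional case $r=m+\alpha$ adds the estimation of the Hölder difference of $D^\theta(fg)$, which splits via
\[
D^\sigma f(x) D^{\theta-\sigma}g(x) - D^\sigma f(y) D^{\theta-\sigma}g(y) = \bigl(D^\sigma f(x)-D^\sigma f(y)\bigr) D^{\theta-\sigma}g(x) + D^\sigma f(y)\bigl(D^{\theta-\sigma}g(x)-D^{\theta-\sigma}g(y)\bigr),
\]
after which one applies the integer-order interpolation to the intermediate orders and uses the $\alpha$-Hölder difference on the remaining factor.

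The main delicacy is the fractional case of the interpolation: one must carefully bookkeep the interplay between the Hölder seminorm at non-integer order and the finite-difference arguments, and verify the constants can be taken uniform in the endpoints. Once the scale-dependent integer-order estimate is proved, the remaining steps are essentially routine interpolation and Young-type manipulations.
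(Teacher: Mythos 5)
The paper states Proposition~\ref{p:interpolation} as a recall of classical elementary inequalities and gives no proof, so there is no in-paper argument to compare against; I am therefore assessing your sketch on its own merits.

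Your plan is sound and follows one of the standard textbook routes: prove the interpolation inequality \eqref{e:Holderinterpolation2} via a scale-dependent estimate $[f]_s \leq C\bigl(h^{-s}\|f\|_0 + h^{r-s}[f]_r\bigr)$ obtained from a Taylor/finite-difference argument for integer orders and a near/far split for the fractional remainder, then optimize in $h$; and derive the product estimate \eqref{e:Holderproduct} by Leibniz, bounding the cross terms with the interpolation inequality and absorbing the resulting geometric means of $\|f\|_0[g]_r$ and $[f]_r\|g\|_0$ via Young's inequality. The chaining of the one-step Taylor estimate to get the full integer-order scale-dependent bound works because the same $h$ is used at every stage, and the fractional split (with $|x-y|\leq h$ controlled by the higher-order seminorm and $|x-y|>h$ by the lower-order sup norm) is the standard device. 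Two small points you should make explicit in a fleshed-out write-up: first, the $\rho$ appearing in your Taylor remainder $O(h^{1+\rho}[f]_r)$ is the fractional part $r=m+1+\rho$, and the argument requires $r>m+1$ for the one-step bound to be nontrivial (the case $r=m+1$ being degenerate but trivial); second, since the ambient domain is $\T^3$, one should note that the scale-dependent inequality is only needed for $h$ up to the diameter, and that the degenerate case $[f]_r=0$ forces $f$ to be constant (so both sides vanish), which justifies the division implicit in the optimal choice of $h$. Neither affects the validity of the outline.
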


We also recall the quadratic commutator estimate of~\cite{CET94}.

\begin{proposition}\label{p:CET}
Let $f,g\in C^{\infty}\left(\T^3\right)$ and $\psi$ a standard radial smooth and compactly supported kernel. For any $r\geq 0$  we have the estimate
\[
\Bigl\|(f*\psi_\ell)( g*\psi_\ell)-(fg)*\psi_\ell\Bigr\|_r\leq C\ell^{2-r}  \|f\|_1\|g\|_1 \, ,
\]
where the constant $C$ depends only on $r$.
\end{proposition}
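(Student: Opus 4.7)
The plan rests on the classical Constantin--E--Titi commutator identity combined with standard mollification estimates.

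First, I would verify the algebraic identity
\begin{equation*}
(f\ast\psi_\ell)(x)(g\ast\psi_\ell)(x) - ((fg)\ast \psi_\ell)(x) = (\bar f - f)(x)(\bar g - g)(x) - \int_{\R^3}\psi_\ell(y)(f(x-y)-f(x))(g(x-y)-g(x))\,dy,
\end{equation*}
where $\bar f := f \ast \psi_\ell$ and $\bar g := g \ast \psi_\ell$. This follows by a direct computation which exploits $\int \psi_\ell\,dy = 1$ to insert and regroup terms.

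Second, I would handle the base case $r = 0$. Since $\psi$ is compactly supported, $\psi_\ell$ is supported in some ball $B_{c\ell}(0)$. For $|y|\leq c\ell$ the mean value theorem yields $|f(x-y)-f(x)|\leq c\ell\|\nabla f\|_0 \leq c\ell \|f\|_1$, and similarly for $g$. Both summands in the identity are then pointwise bounded by $C\ell^2\|f\|_1\|g\|_1$, which gives the $r=0$ estimate.

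Third, for integer $r = N \geq 1$, I would control $D^\theta B(f,g)$ with $|\theta| = N$, where $B(f,g)$ denotes the left-hand side of the identity. Since derivatives commute with mollification, Leibniz yields
\begin{equation*}
D^\theta B(f,g) = \sum_{\theta_1 + \theta_2 = \theta}\binom{\theta}{\theta_1}B(D^{\theta_1}f, D^{\theta_2}g),
\end{equation*}
so the problem reduces to bounding each summand in $C^0$. Applying the identity to the pair $(D^{\theta_1}f, D^{\theta_2}g)$ and then transferring the excess derivatives off $f$ and $g$ onto the mollifier via integration by parts (using the compact support of $\psi$ and the scaling $\|D^\beta\psi_\ell\|_{L^1}\lesssim \ell^{-|\beta|}$), one rewrites each summand so that only a single derivative falls on $f$ and on $g$, preserving the bound by $\|f\|_1\|g\|_1$; the $N$ derivatives distributed on the mollifier produce a total factor $\ell^{-N}$, whereas the two differences $f(x-y)-f(x)$ and $g(x-y)-g(x)$ still supply the $\ell^2$ smallness. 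This yields $\|B(f,g)\|_N \leq C_N \ell^{2-N}\|f\|_1\|g\|_1$.

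Finally, for non-integer $r = N + \alpha$ with $\alpha\in(0,1)$, I would interpolate between the integer bounds via \eqref{e:Holderinterpolation2} applied to $B(f,g)-B(f,g)(x_0)$ (any fixed base point), obtaining
\begin{equation*}
[B(f,g)]_{N+\alpha} \lesssim \|B(f,g)\|_N^{1-\alpha}\|B(f,g)\|_{N+1}^{\alpha} \lesssim \ell^{(2-N)(1-\alpha)+(1-N)\alpha}\|f\|_1\|g\|_1 = \ell^{2-r}\|f\|_1\|g\|_1,
\end{equation*}
which, combined with the integer bound for $\|B(f,g)\|_N$, yields $\|B(f,g)\|_{r}\leq C\ell^{2-r}\|f\|_1\|g\|_1$ (using $\ell \leq 1$ to absorb the $\|\cdot\|_N$ piece).

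The main obstacle is the integer case $r = N \geq 1$: one must organize the integration by parts so that no derivative piles up on either $f$ or $g$ (which would force a higher Hölder norm on the right-hand side). Once this careful bookkeeping is carried out, the commutator structure preserves the crucial $\ell^2$ factor while the $N$ derivatives are absorbed into the mollifier at a cost of $\ell^{-N}$, producing the stated scaling $\ell^{2-r}$.
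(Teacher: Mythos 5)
The paper does not give its own proof of this proposition — it is recalled and attributed to \cite{CET94} — so I assess your argument on its own terms. Your identity, the $r=0$ bound, the Leibniz/bilinearity reduction $D^\theta B(f,g)=\sum\binom{\theta}{\theta_1}B(D^{\theta_1}f,D^{\theta_2}g)$, and the final interpolation are all correct. The gap is exactly where you flag it: the integer step $N\geq 1$. After you apply the asymmetric identity to the pair $(D^{\theta_1}f,D^{\theta_2}g)$, the terms $D^{\theta_1}f(x)$ and $D^{\theta_2}g(x)$ at the \emph{unshifted} point appear in both the pointwise product $(\overline{D^{\theta_1}f}-D^{\theta_1}f)(\overline{D^{\theta_2}g}-D^{\theta_2}g)$ and in the integral $\int\psi_\ell(y)\,\Delta_y D^{\theta_1}f\,\Delta_y D^{\theta_2}g\,dy$; they do not depend on the convolution variable, so there is nothing to integrate by parts against, and for $|\theta_1|\ge 2$ they force $\|f\|_{|\theta_1|}$ on the right. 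These bad terms cancel \emph{exactly} between the two pieces of the identity — and the argument cannot work without exploiting that cancellation — but your sketch treats the two pieces separately and relies on ``careful bookkeeping'' without exhibiting the cancellation, so as written the step does not close.

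A clean way to carry out what you want is to symmetrize first: since $\int\psi_\ell=1$,
\begin{equation*}
B(f,g)(x)=\frac12\iint\psi_\ell(y)\psi_\ell(z)\bigl[f(x-y)-f(x-z)\bigr]\bigl[g(x-z)-g(x-y)\bigr]\,dy\,dz\,,
\end{equation*}
in which $x$ enters only through $x-y$ and $x-z$. Then each $\partial_{x_j}$ acting on the bracketed product equals $-(\partial_{y_j}+\partial_{z_j})$ acting on it, so all $N$ derivatives can be integrated by parts onto $\psi_\ell(y)\psi_\ell(z)$ with no pileup on $f$ or $g$: one obtains a sum of terms $\iint D^\mu\psi_\ell(y)D^\nu\psi_\ell(z)[f(x-y)-f(x-z)][g(x-z)-g(x-y)]\,dy\,dz$ with $|\mu|+|\nu|=N$, each bounded by $\ell^{-|\mu|}\cdot\ell^{-|\nu|}\cdot(C\ell)^2\|f\|_1\|g\|_1=C\ell^{2-N}\|f\|_1\|g\|_1$. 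With this representation your steps two through four go through verbatim.
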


We recall that, if the mollification kernel is radial, it also holds
\begin{equation}\label{improved_moll}
\| f-f_\ell\|_r\lesssim \ell^{2-r}\|f\|_2.
\end{equation}
\section{Local smooth solutions}

In this section we recall that from any smooth initial datum $\overline {v}$ there exists a smooth solution $v$ of \eqref{E}, where its maximal time of existence is proportional to $\|\overline {v}\|_{1+\alpha}^{-1}$. Indeed we have the following

\begin{proposition}\label{p:local:Euler}
For any $0<\alpha<1$ there exists a constant $c=c(\alpha)>0$ with the following property. Given any initial data $\overline  v \in C^{\infty}\left(\T^3\right)$ and $T\leq c\norm{\overline  v}_{1+\alpha}^{-1}$, there exists a unique smooth solution $v:\mathbb T^3 \times [-T,T]\rightarrow \mathbb R^3$  of \eqref{E} such that $v(0,\cdot)=\overline  v$.
Moreover, $v$ obeys the bounds
\begin{align}
\norm{v}_{N+\alpha} \lesssim &\norm{u_0}_{N+\alpha}~. \label{e:euler_eq_bd_k}
\end{align}
for all $N\geq 1$, where the implicit constant depends on $N$ and $\alpha>0$.
\end{proposition}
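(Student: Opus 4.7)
The plan is to establish this classical local well-posedness result through an a priori estimate on the vorticity combined with a standard approximation/compactness argument. First I would construct approximate smooth solutions, either via vanishing viscosity (solving Navier--Stokes with viscosity $\nu$ and letting $\nu \downarrow 0$) or by iterating a linearized scheme of transport type, producing a sequence on which the desired a priori bounds can be verified uniformly. Time-reversibility of \eqref{E} (if $v(x,t)$ solves Euler then so does $-v(x,-t)$) reduces matters to forward existence on $[0,T]$.

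The heart of the argument is the a priori $C^{1+\alpha}$ bound. Setting $\omega = \curl v$, the vorticity equation reads
\begin{equation*}
\partial_t \omega + (v\cdot \nabla)\omega = (\omega \cdot \nabla)v.
\end{equation*}
Applying the $C^\alpha$ transport estimate (Proposition \ref{p:transport}) along characteristics of $v$, together with the bound $\norm{\nabla v}_\alpha \lesssim \norm{\omega}_\alpha$ coming from Calder\'on--Zygmund theory (Proposition \ref{p:CZO_C_alpha}) applied to the Biot--Savart operator, one obtains
\begin{equation*}
\norm{\omega(t)}_\alpha \lesssim \norm{\omega(0)}_\alpha + \int_0^t \norm{\omega(s)}_\alpha^2\, ds.
\end{equation*}
A comparison argument for the scalar ODE $y' = C y^2$ then yields a quantitative bound of the form $\norm{\omega(t)}_\alpha \leq 2 \norm{\omega(0)}_\alpha$ as long as $t \leq c\, \norm{\omega(0)}_\alpha^{-1}$, which is $\simeq c\,\norm{\overline v}_{1+\alpha}^{-1}$ by Schauder. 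This provides the desired lifespan and the base case $N=1$ of \eqref{e:euler_eq_bd_k}.

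For higher regularity I would induct on $N$: differentiating the velocity equation $N$ times and applying the transport estimate again, the non-transport terms split into a product involving $\nabla v$ (estimated by the already-controlled $C^{1+\alpha}$ norm) and higher derivatives of $v$, together with commutators that are handled by the product rule \eqref{e:Holderproduct} and the commutator estimate for Calder\'on--Zygmund operators (Proposition \ref{p:commutator}). Crucially, the $C^{1+\alpha}$ control established before lets the transport coefficient play the role of a harmless Gr\"onwall factor, so the higher norms only grow by a $q$-independent multiplicative constant on the same time interval $[-T,T]$. Uniqueness follows from a standard energy estimate for the difference of two solutions, exploiting the $L^\infty_t W^{1,\infty}_x$ bound implied by the $C^{1+\alpha}$ control.

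The main subtlety, as usual for Euler in H\"older classes, is passing the quadratic a priori estimate rigorously through the approximation procedure: the H\"older transport estimate requires a regular transporting velocity, so one must either work directly with the smooth approximants (for which the estimate is justified) and pass to the limit at the end, or use the vanishing viscosity regularization, whose smoothing properties make the step essentially formal. Once this is handled, all remaining ingredients are routine.
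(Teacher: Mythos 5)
The paper does not prove this proposition itself; it simply cites \cite[Proposition~3.1]{BDLSV2019}, whose proof runs via the vorticity equation, the Biot--Savart law, and H\"older transport estimates with a Gr\"onwall closure. Your proposal follows exactly this standard route (vorticity formulation, $C^\alpha$ transport estimate plus Calder\'on--Zygmund for $\nabla v$, Riccati comparison for the lifespan, then induction in $N$ with the $C^{1+\alpha}$ bound acting as the Gr\"onwall coefficient, and time-reversibility for the backward interval), so in substance you reproduce the cited argument rather than proposing a genuinely different one. One small remark: in the induction step you speak of differentiating the \emph{velocity} equation $N$ times, which would force you to also control $\nabla p$ via potential-theoretic estimates; it is cleaner, and closer to the reference, to keep differentiating the \emph{vorticity} equation so that the pressure never appears and only the Biot--Savart commutator (Proposition~\ref{p:commutator}) is needed. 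Also note the implicit stray "$q$" in your closing sentence should read "$N$"; the key point you are (correctly) making is that the time of validity $[-T,T]$, with $T\lesssim \norm{\overline v}_{1+\alpha}^{-1}$, is independent of $N$.
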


We refer to \cite[Propositon 3.1]{BDLSV2019} for the proof.

\section{Potential theory estimates}
We recall the definition of the standard class of periodic Calder{\'o}n-Zygmund operators.
Let $K$ be an $\R^3$ kernel which obeys the properties
\begin{itemize}
\item $ K(z) = w\left(\frac{z}{|z|}\right) |z|^{-3} $, for all $z\in\R^3 \setminus \{0\}$ 
\item $w \in C^\infty({\mathbb S}^2)$
\item $\int_{|\widehat z|=1} w(\widehat z) d\widehat z = 0$.
\end{itemize}
From the $\R^3$ kernel $K$, use Poisson summation to define the periodic kernel 
\begin{align*}
K_{\T^3}(z) =  K(z) + \sum_{\ell \in {\mathbb Z}^3 \setminus \{0\}} \left( K(z+\ell) - K(\ell) \right).
\end{align*}
Then the operator
\begin{align*}
T_K f(x) = p.v. \int_{\T^3} K_{\T^3}(x-y) f(y) dy
\end{align*}
is a $\T^3$-periodic Calder{\'o}n-Zygmund operator, acting on $\T^3$-periodic functions $f$ with zero mean on $\T^3$.
The following proposition, proving the boundedness of periodic Calder{\'o}n-Zygmund operators on periodic H\"older spaces is classical
\begin{proposition}
\label{p:CZO_C_alpha}
Fix $\alpha \in (0,1)$. Periodic Calder{\'o}n-Zygmund operators are bounded on the space of zero mean $\T^3$-periodic $C^\alpha$ functions. 
\end{proposition}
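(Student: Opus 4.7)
The plan is to follow the classical H\"older estimate for Calder\'on--Zygmund operators on $\R^3$ and check that the periodization introduces no essential new difficulty. First I would peel off the smooth correction: on a fundamental domain the periodic kernel decomposes as $K_{\T^3} = K + \Psi$, where $\Psi(z) := \sum_{\ell \neq 0}\bigl(K(z+\ell) - K(\ell)\bigr)$. By the mean-value theorem and the homogeneity bounds $|\nabla^m K(w)| \lesssim |w|^{-3-m}$, each summand together with all its $z$-derivatives is controlled by $|\ell|^{-4}$ uniformly for $z$ in a compact subset of $\R^3 \setminus \Z^3$, so the series converges locally in $C^\infty$ and $\Psi$ extends to a smooth $\T^3$-periodic function. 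Convolution against $\Psi$ is then trivially bounded on $C^\alpha(\T^3)$, reducing the proposition to the principal-value convolution against $K$ restricted to a period, i.e.\ the standard $\R^3$ Calder\'on--Zygmund situation.

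For the supremum bound I would use the spherical cancellation $\int_{S^2} w = 0$ (which gives $\int_{|z| \leq R} K(z)\, dz = 0$ for every $R$) together with the zero-mean of $f$ to justify the rewriting
\[ T_K f(x) = \int_{\T^3} K_{\T^3}(x-y)\bigl(f(y) - f(x)\bigr)\, dy, \]
whose integrand is now absolutely integrable thanks to $|K_{\T^3}(z)| \lesssim |z|^{-3}$ and $|f(y) - f(x)| \lesssim [f]_\alpha\, |y-x|^\alpha$; a routine polar-coordinate computation yields $\|T_K f\|_0 \lesssim \|f\|_\alpha$.

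For the H\"older seminorm I would fix $x, y \in \T^3$ with $h := |x-y|$ small and decompose $T_K f(x) - T_K f(y)$ via the same rewriting, splitting the integration domain into the near piece $B := \{z : |x-z| \leq 2h\}$ and its complement. On $B$, each of the two principal-value integrals is bounded separately using $|K_{\T^3}| \lesssim |\cdot|^{-3}$ and $|f(\cdot) - f(x)| \lesssim |\cdot-x|^\alpha [f]_\alpha$ (respectively for $y$), producing a contribution of order $h^\alpha [f]_\alpha$. On the complement of $B$, I would exploit the kernel smoothness to get $|K_{\T^3}(x-z) - K_{\T^3}(y-z)| \lesssim h\, |x-z|^{-4}$ (the segment from $x$ to $y$ stays at distance $\gtrsim h$ from $z$), pair this with $f(z) - f(x)$, and integrate: $\int_{|x-z| \geq 2h} h\, |x-z|^{\alpha - 4}\, dz \lesssim h^\alpha [f]_\alpha$, matching the desired seminorm bound. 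Since the result is classical, no single step is an essential obstacle; the most delicate bookkeeping is the simultaneous use of the spherical cancellation of $w$ (for integrability near the diagonal) and the smoothness of $w$ (for the mean-value bound in the far region), both of which are precisely the defining hypotheses of the CZ class.
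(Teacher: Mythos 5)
The paper offers no proof of Proposition~\ref{p:CZO_C_alpha}; it merely records the statement as classical, so there is nothing in the paper to compare your argument against, and I will instead assess it on its own. Your route is the standard one (peel off the smooth periodization correction, then run the usual near/far H\"older estimate for a homogeneous kernel with spherical cancellation), and the seminorm estimate is correct. Two minor points. First, $\Psi = K_{\T^3}-K$ is not $\T^3$-periodic, since $K_{\T^3}$ is periodic and $K$ is not; what you actually use, and what is true, is that $\Psi$ is smooth on a neighbourhood of the fundamental cube, thanks to the $|\ell|^{-4}$ decay of the terms of the series and of their $z$-derivatives. Second, the rewriting $T_Kf(x)=\int_{\T^3}K_{\T^3}(x-y)\bigl(f(y)-f(x)\bigr)\,dy$ requires $\mathrm{p.v.}\int_{\T^3}K_{\T^3}(z)\,dz=0$, which does not follow from the spherical cancellation of $w$: that cancellation kills integrals over balls centred at $0$, whereas the fundamental domain is a cube, so $\mathrm{p.v.}\int_{[-1/2,1/2]^3}K$ is a (generically nonzero) constant, and one must also account for $\int\Psi$. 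The correct identity is $T_Kf(x)=\int_{\T^3}K_{\T^3}(x-y)\bigl(f(y)-f(x)\bigr)\,dy+c\,f(x)$ for some constant $c=c(K)$; the extra term $c\,f(x)$ is trivially bounded in $C^\alpha$, so your argument survives unchanged, but you should carry the constant rather than assert it vanishes. Relatedly, the zero mean of $f$ is not what licenses this rewriting; it is what guarantees that $T_Kf$ again has zero mean, so that $T_K$ maps the stated space to itself.
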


The following proposition is taken from \cite{BDLSV2019}.
\begin{proposition}\label{p:commutator} Let $\alpha\in (0,1)$ and $N\geq 0$. Let $T_K$ be a Calder{\'o}n-Zygmund operator with kernel $K$. Let $b\in C^{N+1,\alpha}\left(\T^3\right)$ be a divergence free vector field. Then we have
$$
\norm{\left[T_K, b\cdot \nabla \right] f}_{N+\alpha}\lesssim \|b\|_{1+\alpha}\|f\|_{N+\alpha}+\|b\|_{N+1+\alpha}\|f\|_{\alpha},
$$
for any $f\in C^{N+\alpha}\left( \T^3\right)$, where the implicit constant depends on $\alpha, N$ and $K$.
\end{proposition}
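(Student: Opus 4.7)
The plan is to first put the commutator in a form amenable to singular integral analysis, then prove the base case $N=0$ by classical Calder\'on-Zygmund techniques, and finally lift to general $N$ by Leibniz differentiation and interpolation.

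I would start by exploiting $\div b = 0$ together with a formal integration by parts (rewriting $b \cdot \nabla f = \div(fb)$ and moving the derivative onto $K$) to obtain
\begin{equation*}
[T_K, b \cdot \nabla] f(x) = \text{p.v.} \int_{\T^3} \nabla_x K_{\T^3}(x-y) \cdot \bigl( b(y) - b(x) \bigr) f(y) \, dy \,,
\end{equation*}
using that $T_K$ commutes with spatial derivatives. The factor $b(y) - b(x)$ provides the cancellation needed to tame the kernel $\nabla K$, which has order $-4$. For the base case $N=0$, one wants $\norm{[T_K, b\cdot \nabla]f}_\alpha \lesssim \norm{b}_{1+\alpha} \norm{f}_\alpha$. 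The $L^\infty$ bound is obtained by further symmetrizing against $f(x)$ on the near field $\{|y-x|<1\}$: the doubly-symmetrized piece $(b(y)-b(x))(f(y)-f(x))$ is absolutely integrable thanks to $|b(y)-b(x)| \lesssim [b]_1 |y-x|$ and $|f(y)-f(x)| \lesssim [f]_\alpha |y-x|^\alpha$, giving an integrand of size $|y-x|^{\alpha-3}$; the residual $f(x)\,\text{p.v.}\!\int_{|y-x|<1} \nabla K(x-y)(b(y)-b(x))\,dy$ is controlled by expanding $b(y)-b(x) = \nabla b(x)\cdot(y-x) + O([b]_{1+\alpha}|y-x|^{1+\alpha})$ and invoking the standard zero-average cancellation of the Calder\'on-Zygmund kernel $\nabla K$. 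The $C^\alpha$ seminorm is then handled by the standard kernel-differencing technique, splitting the integration into a ball of radius $\sim |x-x'|$ around $\{x,x'\}$ and its complement, where on the latter one uses $|\nabla K(x-y) - \nabla K(x'-y)| \lesssim |x-x'|\,|y-x|^{-5}$.

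For the inductive step, I would apply $\partial^\theta$ with $|\theta| \leq N$ to the commutator. Since $T_K$ commutes with derivatives, the Leibniz rule yields
\begin{equation*}
\partial^\theta [T_K, b\cdot \nabla] f = \sum_{\beta \leq \theta} \binom{|\theta|}{|\beta|} \, [T_K, (\partial^{\theta-\beta} b)\cdot \nabla] \, \partial^\beta f \,,
\end{equation*}
and each $\partial^{\theta-\beta} b$ remains divergence-free because $\div \partial^{\theta-\beta} b = \partial^{\theta-\beta}\div b = 0$. Thus the base case applies to every summand, producing the estimate $\norm{b}_{|\theta-\beta|+1+\alpha}\norm{f}_{|\beta|+\alpha}$, which via the H\"older interpolation inequality \eqref{e:Holderinterpolation2} and Young's inequality is bounded by the target $\norm{b}_{1+\alpha}\norm{f}_{N+\alpha} + \norm{b}_{N+1+\alpha}\norm{f}_\alpha$.

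The main obstacle is the base case $N=0$: the factor $b(y)-b(x)$ alone only partially compensates for the $|y-x|^{-4}$ singularity of $\nabla K$, leaving a logarithmically-borderline integral, so one really needs both the divergence-free assumption on $b$ (to eliminate a spurious $\div b$ contribution in the integration by parts, sparing one effective derivative on $K$) and the standard Calder\'on-Zygmund cancellation of $K$ inherited from the zero-average of $w$ on the unit sphere. The periodic corrector in the definition of $K_{\T^3}$ is smooth away from the origin and contributes only lower-order, harmless terms which can be absorbed in any of the above estimates.
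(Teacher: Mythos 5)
The paper does not actually prove this proposition; it is cited directly from \cite{BDLSV2019}, so there is no internal argument to compare against. That said, your sketch is a correct reconstruction of the standard argument. The identity
\begin{equation*}
[T_K, b\cdot\nabla]f(x)=\mathrm{p.v.}\!\int_{\T^3}\nabla K_{\T^3}(x-y)\cdot\bigl(b(y)-b(x)\bigr)f(y)\,dy
\end{equation*}
follows exactly as you say from $\div b=0$ and the translation invariance of $T_K$, and the cancellation you need for the residual term really is there: integrating by parts on an annulus $\{\epsilon<|z|<1\}$ gives $\int_{\epsilon<|z|<1}\partial_i K(z)\,z_j\,dz=-\delta_{ij}\int_{\epsilon<|z|<1}K(z)\,dz$ plus boundary contributions that cancel, and the remaining bulk integral vanishes because $\int_{\mathbb{S}^2}w\,d\sigma=0$; hence the p.v.\ of the linear term $\nabla b(x)\cdot(y-x)$ against $\nabla K(x-y)$ is exactly zero and only the $O\bigl([b]_{1+\alpha}|y-x|^{1+\alpha}\bigr)$ Taylor remainder survives. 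One imprecision worth flagging is the phrase ``zero-average cancellation of the Calder\'on--Zygmund kernel $\nabla K$'': since $\nabla K$ is homogeneous of degree $-4$ it is not itself a CZ kernel, and the cancellation does not live in $\nabla K$ per se but is inherited from the mean-zero property of $K$ by the integration-by-parts computation above --- your closing sentence states this more carefully and is the correct formulation. The Leibniz expansion $\partial^\theta[T_K,b\cdot\nabla]f=\sum_{\beta\le\theta}\binom{\theta}{\beta}[T_K,(\partial^{\theta-\beta}b)\cdot\nabla]\partial^\beta f$ is valid (using again that $T_K$ commutes with spatial derivatives), each $\partial^{\theta-\beta}b$ is still divergence-free, and absorbing the mixed terms $\norm{b}_{|\theta-\beta|+1+\alpha}\norm{f}_{|\beta|+\alpha}$ via \eqref{e:Holderinterpolation2} and Young's inequality is exactly right. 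The sketch is sound.
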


 \section{Stationary phase lemma}
The following is a simple consequence of classical stationary phase techniques.
\begin{proposition}
\label{p:R:oscillatory_phase}
Let  $\alpha \in(0,1)$ and $N \geq 1$. Let $a \in C^\infty\left(\T^3\right)$, $\Phi\in C^\infty\left(\T^3;\R^3\right)$ be smooth functions and assume that
\begin{equation*}
\widehat{C}^{-1}\leq \abs{\nabla \Phi}, | \nabla \Phi^{-1}|\leq \widehat{C}
\end{equation*}
holds on $\T^3$. Then for the operator $\RR$ defined in \eqref{e:R:def}, we have
 \begin{align*}
\norm{ {\mathcal R} \left(a(x) e^{i k\cdot \Phi} \right)}_{\alpha} 
&\lesssim \frac{  \norm{a}_{0}}{|k|^{1-\alpha}} +   \frac{ \norm{a}_{N+\alpha}+\norm{a}_{0}\norm{\Phi}_{N+\alpha}}{|k|^{N-\alpha}} \, ,
\end{align*}
where the implicit constant depends on $\widehat{C}$, $\alpha$ and $N$, but not on $k$.
\end{proposition}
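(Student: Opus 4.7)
The plan is to combine repeated integration by parts against the non-stationary phase $e^{ik\cdot\Phi}$ with the fact that $\mathcal{R}$ is a pseudodifferential operator of order $-1$.

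First I would reduce to a standard form. Inspecting the definition \eqref{e:R:def}, each component $\mathcal{R}^{ijk}$ is a Fourier multiplier whose symbol is smooth and homogeneous of degree $-1$ away from the origin. I can therefore factor $\mathcal{R}=T\circ(-\Delta)^{-1/2}$, where $T$ is a (vector valued) periodic Calder\'on--Zygmund operator of order $0$, bounded on zero-mean $C^\alpha(\T^3)$ by Proposition \ref{p:CZO_C_alpha}. Since $ae^{ik\cdot\Phi}$ has zero mean for $k\neq 0$, it suffices to bound $\|(-\Delta)^{-1/2}(ae^{ik\cdot\Phi})\|_\alpha$ by the claimed quantity.

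Next I would set up the key IBP device. Let $\xi(y):=(\nabla\Phi(y))^T k$, which by hypothesis satisfies $|\xi(y)|\geq |k|/\widehat{C}$. The first-order differential operator $Lg:=-i|\xi|^{-2}\xi\cdot\nabla g$ satisfies $L(e^{ik\cdot\Phi})=e^{ik\cdot\Phi}$, so for any smooth test $\varphi$ and every $M\in\N$
\begin{equation*}
\int_{\T^3}\varphi(y)\,a(y)\,e^{ik\cdot\Phi(y)}\,dy=\int_{\T^3}(L^*)^M(\varphi a)(y)\,e^{ik\cdot\Phi(y)}\,dy,
\end{equation*}
where $L^*$ is the formal adjoint of $L$. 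A Leibniz computation using $|\xi|\sim |k|$ yields the pointwise bound $\|(L^*)^M(\varphi a)\|_0\lesssim |k|^{-M}(\|a\|_M+\|a\|_0\|\Phi\|_{M+1})\|\varphi\|_{C^M}$. Applied with $\varphi$ equal to (a smoothly truncated version of) the kernel of $(-\Delta)^{-1/2}$, this identity produces the $C^0$ estimate
\begin{equation*}
\|\mathcal{R}(ae^{ik\cdot\Phi})\|_0\lesssim \frac{\|a\|_0}{|k|}+\frac{\|a\|_N+\|a\|_0\|\Phi\|_{N}}{|k|^N},
\end{equation*}
where the first summand comes from the near-diagonal region treated without IBP and the second from the far-diagonal region on which $(L^*)^N$ is applied.

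To upgrade to $C^\alpha$ I would run the same IBP argument at the level of $\nabla\mathcal{R}$, which is a zero-order periodic Calder\'on--Zygmund operator bounded on $C^\alpha$ by Proposition \ref{p:CZO_C_alpha}, to obtain the companion estimate
\begin{equation*}
\|\mathcal{R}(ae^{ik\cdot\Phi})\|_1\lesssim \|a\|_0+\frac{\|a\|_{N+1}+\|a\|_0\|\Phi\|_{N+1}}{|k|^{N-1}}.
\end{equation*}
Interpolating between these two bounds via \eqref{e:Holderinterpolation2} (and a further interpolation at the level of $a$ to pass from $\|a\|_N$ to $\|a\|_{N+\alpha}$) produces the desired main term $|k|^{\alpha-1}\|a\|_0$ together with the remainder $|k|^{\alpha-N}(\|a\|_{N+\alpha}+\|a\|_0\|\Phi\|_{N+\alpha})$. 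The main technical obstacle is managing the growing singularity of the kernel of $(-\Delta)^{-1/2}$ under repeated IBP: each application of $L^*$ raises its order of singularity by one, so after $N$ iterations it is no longer locally integrable. I would circumvent this by a smooth cutoff at diagonal scale $\sim 1/|k|$: on the near-diagonal region one uses the locally-integrable bound on the original kernel together with the trivial estimate $\|ae^{ik\cdot\Phi}\|_0\leq\|a\|_0$ (no IBP), while on the far-diagonal region the kernel is smooth enough to absorb all $N$ integrations by parts.
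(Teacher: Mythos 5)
The paper does not prove this proposition; it simply declares it ``a simple consequence of classical stationary phase techniques'' and cites the operator $\mathcal R$ from \cite{BDLSV2019}. The standard proof in the convex integration literature (see, e.g., the stationary phase lemmas in Daneri--Sz\'ekelyhidi or the appendix of \cite{BDLSV2019}) proceeds by iterating the identity $a\,e^{ik\cdot\Phi}=-i\div\!\left(a\,b\,e^{ik\cdot\Phi}\right)+i\div(ab)\,e^{ik\cdot\Phi}$ with $b=(\nabla\Phi)^{-1}k/|k|^2$, applying the boundedness of $\mathcal R\div$ on zero-mean $C^\alpha$ at each stage, and then estimating the $\alpha$-seminorm of each explicit term directly. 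Your plan --- factoring $\mathcal R = T\circ(-\Delta)^{-1/2}$, running non-stationary-phase IBP via $L$, splitting near/far diagonal at scale $|k|^{-1}$ --- is in the same spirit, and the $C^0$ and $C^1$ estimates you outline are plausible. However, the concluding interpolation step has a genuine gap.

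The problem is in the passage from the $C^0$ and $C^1$ bounds to the $C^\alpha$ bound. First, your $C^0$ bound carries $\|a\|_N$ and your $C^1$ bound carries $\|a\|_{N+1}$; interpolation via \eqref{e:Holderinterpolation2} then produces a factor $\|a\|_N^{1-\alpha}\|a\|_{N+1}^{\alpha}$, and you claim a ``further interpolation at the level of $a$'' turns this into $\|a\|_{N+\alpha}$. This inequality goes the wrong way: log-convexity of H\"older norms gives $\|a\|_{N+\alpha}\lesssim \|a\|_N^{1-\alpha}\|a\|_{N+1}^{\alpha}$, not the reverse (equality fails badly, e.g., when $a$ is the sum of a very low-frequency and a very high-frequency mode). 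The same issue affects $\|\Phi\|_{N}^{1-\alpha}\|\Phi\|_{N+1}^{\alpha}$ versus $\|\Phi\|_{N+\alpha}$. Second, interpolating $\|u\|_0\leq A_1+A_2$ against $\|u\|_1\leq B_1+B_2$ produces $(A_1+A_2)^{1-\alpha}(B_1+B_2)^{\alpha}$, whose cross terms $A_1^{1-\alpha}B_2^{\alpha}$ and $A_2^{1-\alpha}B_1^{\alpha}$ need not be controlled by $A_1^{1-\alpha}B_1^{\alpha}+A_2^{1-\alpha}B_2^{\alpha}$ unless the two bounds come from the \emph{same} decomposition of $u$ into two pieces, each interpolated separately with compatible norms. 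Your near/far split does yield such a decomposition, but then the near piece does not admit a useful $C^1$ bound (the kernel gradient $\nabla(\chi_{\mathrm{near}}K)\sim|z|^{-3}$ is borderline nonintegrable on the near-diagonal), so you cannot interpolate that piece between $C^0$ and $C^1$. The standard route avoids all of this by estimating the $C^\alpha$ seminorm of each term produced by the iterated divergence identity directly: one uses $\norm{\mathcal R\div(fe^{ik\cdot\Phi})}_\alpha\lesssim\norm{fe^{ik\cdot\Phi}}_\alpha\lesssim |k|^\alpha\norm{f}_0+\norm{f}_\alpha$ and bookkeeps derivatives, which delivers the stated bound (in fact with $\|a\|_N$ rather than $\|a\|_{N+\alpha}$ in the second term) without any interpolation of the output.
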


\section{Estimates on the transport equation}

In this section we recall some well known results regarding smooth solutions of
the transport equation
\begin{equation}\label{e:transport}
\left\{\begin{array}{l}
\partial_t f + v\cdot  \nabla f =g\\ 
f|_{t_0}=f_0,
\end{array}\right.
\end{equation}
where $v=v(t,x)$ is a given smooth vector field. We will consider solutions
on the entire space $\R^3$ and treat solutions on the torus simply as periodic solution in $\R^3$.

\begin{proposition}\label{p:transport}
Assume $|t-t_0|\|v\|_1\leq 1$. Any solution $f$ of \eqref{e:transport} satisfies
\begin{align}
\|f(t)\|_\alpha &\lesssim \|f_0\|_\alpha + \int_{t_0}^t \|g (\tau)\|_\alpha\, d\tau\,,\label{e:trans_est_0}
\end{align}
for all $0\leq \alpha\leq 1$, and, more generally, for any $N\geq 1$ and $0\leq\alpha\leq 1$
\begin{align}
[f (t)]_N \leq [ f_0]_{N+\alpha} + (t-t_0)[v]_{N+\alpha} [f_0]_1 +\int_{t_0}^t \left([g (\tau)]_N + (t-\tau ) [v ]_N [g (\tau)]_{1}\right)\,d\tau.
\label{e:trans_est_1}
\end{align}
Define $\Phi (t, \cdot)$ to be the inverse of the flux $X$ of $v$ starting at time $t_0$ as the identity
(i.e. $\frac{d}{dt} X = v (X,t)$ and $X (x, t_0 )=x$). Under the same assumptions as above, it holds
\begin{align}
\norm{\nabla \Phi (t) -\Id}_0&\lesssim |t-t_0| [v]_1\,,  \label{e:Dphi_near_id}\\
[\Phi (t)]_N &\lesssim  |t-t_0| [v]_N\quad \forall N \geq 2.\label{e:Dphi_N}
\end{align}
\end{proposition}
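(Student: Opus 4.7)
My plan is to work throughout using the method of characteristics. Let $X(\cdot, s)$ denote the flow of $v$ starting at time $t_0$ (so $\partial_s X = v(X, s)$ and $X(x, t_0) = x$), and let $\Phi(\cdot, t)$ be its spatial inverse at time $t$. Duhamel along characteristics gives
\begin{equation*}
f(y, t) = f_0(\Phi(y, t)) + \int_{t_0}^t g\bigl(X(\Phi(y,t), \tau), \tau\bigr) \, d\tau,
\end{equation*}
so everything reduces to (i) quantitative flow estimates and (ii) H\"older estimates for compositions with a flow that is close to the identity on the relevant time window.

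First I would prove the flow bounds \eqref{e:Dphi_near_id}--\eqref{e:Dphi_N}. Differentiating the ODE $\partial_s X = v(X,s)$ yields $\partial_s \nabla X = (\nabla v)(X,s)\, \nabla X$, hence by Gr\"onwall and the hypothesis $|t-t_0|\|v\|_1\leq 1$ one gets $\|\nabla X(t)\|_0 \leq e$ and, subtracting $\Id$, $\|\nabla X(t)-\Id\|_0 \lesssim |t-t_0|[v]_1$. Transferring this to $\Phi$ via $\nabla\Phi(y,t) = (\nabla X)^{-1}(\Phi(y,t),t)$ and a Neumann series argument (valid because $\nabla X-\Id$ is small) gives \eqref{e:Dphi_near_id}. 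For \eqref{e:Dphi_N} with $N\geq 2$, I would differentiate the ODE $N$ times, use Fa\`a di Bruno to expand $\partial^{\theta}((\nabla v)(X)\nabla X)$, isolate the linear-in-$\partial^{\theta}\nabla X$ top-order term (absorbed via Gr\"onwall), and estimate the remaining lower-order terms using the already obtained $C^1$ bounds on $X$ together with $\|v\|_{1}\leq \|v\|_{N+\alpha}$; the factor $|t-t_0|$ appears because $\nabla X(t_0)=\Id$ kills the homogeneous contribution.

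Next, for the $C^\alpha$ transport estimate \eqref{e:trans_est_0}, I would use the Duhamel formula above and the elementary composition inequality $[h\circ\Phi]_\alpha \leq \|\nabla \Phi\|_0^{\alpha}[h]_\alpha$. Since $\|\nabla\Phi\|_0\leq 2$ by \eqref{e:Dphi_near_id}, this gives $[f_0\circ\Phi(\cdot,t)]_\alpha \lesssim [f_0]_\alpha$ and similarly for each time slice $g(\cdot,\tau)\circ\Psi$, where $\Psi$ is the composed flow map $y\mapsto X(\Phi(y,t),\tau)$, which still has $\|\nabla\Psi\|_0\lesssim 1$ by the same argument applied on the interval $[\tau,t]$. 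Integrating in $\tau$ yields \eqref{e:trans_est_0}.

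For the top estimate \eqref{e:trans_est_1} I would combine the Duhamel representation with Fa\`a di Bruno applied to $f_0\circ \Phi(\cdot,t)$ and $g(\tau,\cdot)\circ \Psi$. The $N$-th derivative splits into two kinds of terms: one where all $N$ derivatives fall on $f_0$ (or $g$), giving $[f_0]_{N+\alpha}\|\nabla\Phi\|_0^N \lesssim [f_0]_{N+\alpha}$; and mixed terms where some derivatives fall on $\Phi$ (or $\Psi$), which by \eqref{e:Dphi_N} carry the factor $|t-t_0|[v]_{N+\alpha}$ paired with lower-order norms of $f_0$ (or $g$) that are then tamed by interpolation \eqref{e:Holderinterpolation2} into the product $|t-t_0|[v]_{N+\alpha}[f_0]_1$ claimed in the statement. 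I expect the main obstacle to be exactly this bookkeeping: identifying which mixed terms reduce to $[f_0]_1[v]_{N+\alpha}$ rather than the weaker combination $[f_0]_{N+\alpha}[v]_1$, and verifying the corresponding assertion for the $g$-dependent piece under the integral with the extra factor $(t-\tau)$ that arises because the flow $\Psi$ satisfies $\Psi(y,t)=y$ and hence its derivatives vanish at $\tau=t$ with rate $(t-\tau)$.
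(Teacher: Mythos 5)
The paper gives no proof of Proposition \ref{p:transport}: it is recalled as a well-known result, whose standard proof (e.g.\ in the appendices of \cite{BDLIS15}, \cite{DS2013}) is exactly the characteristics argument you outline, so your route coincides with the canonical one. Your Duhamel representation, the Gr\"onwall bounds on $\nabla X$, the composition estimate $[h\circ\Phi]_\alpha\leq \|\nabla\Phi\|_0^\alpha[h]_\alpha$ for \eqref{e:trans_est_0}, and the Fa\`a di Bruno/interpolation bookkeeping for \eqref{e:trans_est_1} (with $|t-t_0|\|v\|_1\leq1$ used to absorb the terms carrying $[f_0]_N[v]_1$ rather than $[f_0]_1[v]_N$) are all correct and standard. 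One point needs repair: the hypothesis $|t-t_0|\|v\|_1\leq1$ only yields $\|\nabla X-\Id\|_0\leq e^{|t-t_0|[v]_1}-1\leq e-1$, which is \emph{not} less than $1$, so the Neumann series for $(\nabla X)^{-1}$ you invoke need not converge. Instead, bound $(\nabla X)^{-1}$ directly via its own ODE $\partial_s(\nabla X)^{-1}=-(\nabla X)^{-1}(\nabla v)(X)$ and Gr\"onwall, giving $\|(\nabla X)^{-1}\|_0\leq e$ and hence $\|\nabla\Phi-\Id\|_0\leq\|(\nabla X)^{-1}\|_0\|\nabla X-\Id\|_0\lesssim|t-t_0|[v]_1$; a slicker alternative, which also streamlines \eqref{e:Dphi_N}, is to observe that $\Phi-\mathrm{id}$ itself solves \eqref{e:transport} with $g=-v$ and zero initial datum, so \eqref{e:trans_est_0}--\eqref{e:trans_est_1} apply to it directly and avoid the Fa\`a di Bruno expansion of the inverse map altogether.
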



\begin{bibdiv}
\begin{biblist}

\bib{BBV20}{article}{
    author = {Beekie, Rajendra},
    author={ Buckmaster, Tristan}, 
    author={Vicol, Vlad},
    title = {Weak solutions of ideal {MHD} which do not conserve magnetic
              helicity},
   journal = {Ann. PDE},
    volume = {6},
     date = {2020},
    number = {1},
     pages = {Paper No. 1, 40},
      issn = {2524-5317},
  review = {\MR{4105741}},
       doi = {10.1007/s40818-020-0076-1},
}

\bib{BDS11}{article}{
   author={Brenier, Yann},
   author={De Lellis, Camillo},
   author={Sz\'{e}kelyhidi, L\'{a}szl\'{o}, Jr.},
   title={Weak-strong uniqueness for measure-valued solutions},
   journal={Comm. Math. Phys.},
   volume={305},
   date={2011},
   number={2},
   pages={351--361},
   issn={0010-3616},
   review={\MR{2805464}},
   doi={10.1007/s00220-011-1267-0},
}

\bib{BCDL20}{article}{
   author={Bru\`e, Elia},
   author={Colombo, Maria},
   author={De Lellis, Camillo}
   title={Positive solutions of transport equations and classical nonuniqueness of characteristic curves},
   journal={arXiv:2003.00539},
   date={2020},
}

\bib{BDLIS15}{article}{
   author={Buckmaster, Tristan},
   author={De Lellis, Camillo},
   author={Isett, Philip},
   author={Sz\'{e}kelyhidi, L\'{a}szl\'{o}, Jr.},
   title={Anomalous dissipation for $1/5$-H\"{o}lder Euler flows},
   journal={Ann. of Math. (2)},
   volume={182},
   date={2015},
   number={1},
   pages={127--172},
   issn={0003-486X},
   review={\MR{3374958}},
   doi={10.4007/annals.2015.182.1.3},
}

\bib{BDLSV2019}{article}{
   author={Buckmaster, Tristan},
   author={De Lellis, Camillo},
   author={Sz\'{e}kelyhidi, L\'{a}szl\'{o}, Jr.},
   author={Vicol, Vlad},
   title={Onsager's conjecture for admissible weak solutions},
   journal={Comm. Pure Appl. Math.},
   volume={72},
   date={2019},
   number={2},
   pages={229--274},
   issn={0010-3640},
   review={\MR{3896021}},
   doi={10.1002/cpa.21781},
}
	
\bib{BCV19}{article}{
   author={Buckmaster, Tristan},
   author={Colombo, Maria},
   author={Vicol, Vlad},
   title={Wild solutions of the Navier-Stokes equations whose singular sets in time have Hausdorff dimension strictly less than 1},
   journal={	arXiv:1809.00600 [math.AP]},
   date={2019},
}
	
\bib{BSV19}{article}{
   author={Buckmaster, Tristan},
   author={Shkoller, Steve},
   author={Vicol, Vlad},
   title={Nonuniqueness of weak solutions to the SQG equation},
   journal={Comm. Pure Appl. Math.},
   volume={72},
   date={2019},
   number={9},
   pages={1809--1874},
   issn={0010-3640},
   review={\MR{3987721}},
   doi={10.1002/cpa.21851},
}

\bib{BV2017}{article}{
   author={Buckmaster, Tristan},
   author={Vicol, Vlad},
   title={Nonuniqueness of weak solutions to the Navier-Stokes equation},
   journal={Ann. of Math. (2)},
   volume={189},
   date={2019},
   number={1},
   pages={101--144},
   issn={0003-486X},
   review={\MR{3898708}},
   doi={10.4007/annals.2019.189.1.3},
}

\bib{CL20}{article}{
   author={Cheskidov, Alexey},
   author={Luo, Xiaoyutao},
   title={Sharp nonuniqueness for the Navier-Stokes equations},
   journal={	arXiv:2009.06596 [math.AP]},
   date={2020},
}

\bib{CD18}{article}{
   author={Colombo, Maria},
   author={De Rosa, Luigi},
   title={Regularity in time of H\"{o}lder solutions of Euler and
   hypodissipative Navier-Stokes equations},
   journal={SIAM J. Math. Anal.},
   volume={52},
   date={2020},
   number={1},
   pages={221--238},
   issn={0036-1410},
   review={\MR{4051979}},
   doi={10.1137/19M1259900},
}

	\bib{CDD18}{article}{
   author={Colombo, Maria},
   author={De Lellis, Camillo},
   author={De Rosa, Luigi},
   title={Ill-posedness of Leray solutions for the hypodissipative
   Navier-Stokes equations},
   journal={Comm. Math. Phys.},
   volume={362},
   date={2018},
   number={2},
   pages={659--688},
   issn={0010-3616},
   review={\MR{3843425}},
   doi={10.1007/s00220-018-3177-x},
}

\bib{CET94}{article}{
   author={Constantin, Peter},
   author={E, Weinan},
   author={Titi, Edriss S.},
   title={Onsager's conjecture on the energy conservation for solutions of
   Euler's equation},
   journal={Comm. Math. Phys.},
   volume={165},
   date={1994},
   number={1},
   pages={207--209},
   issn={0010-3616},
   review={\MR{1298949}},
}

\bib{DS17}{article}{
    author = {Daneri, Sara},
    author ={ Sz\'{e}kelyhidi Jr., L\'{a}szl\'{o}},
    title = {Non-uniqueness and h-principle for {H}\"{o}lder-continuous weak
              solutions of the {E}uler equations},
   journal = {Arch. Ration. Mech. Anal.},
    volume = {224},
      date = {2017},
    number = {2},
     pages = {471--514},
      issn = {0003-9527},
  review = {\MR{3614753}},
       doi = {10.1007/s00205-017-1081-8},
}

\bib{DRS20}{article}{
    author = {Daneri, Sara},
    author ={ Sz\'{e}kelyhidi Jr., L\'{a}szl\'{o}},
    author={Runa, Eris}, 
    title={Non-uniqueness for the Euler equations up to Onsager's critical exponent}
    journal={arXiv:2004.00391},
   date={2020},
   }

\bib{DS2013}{article}{
   author={De Lellis, Camillo},
   author={Sz\'{e}kelyhidi, L\'{a}szl\'{o}, Jr.},
   title={Dissipative continuous Euler flows},
   journal={Invent. Math.},
   volume={193},
   date={2013},
   number={2},
   pages={377--407},
   issn={0020-9910},
   review={\MR{3090182}},
   doi={10.1007/s00222-012-0429-9},
}
		
\bib{DR19}{article}{
   author={De Rosa, Luigi},
   title={Infinitely many Leray-Hopf solutions for the fractional
   Navier-Stokes equations},
   journal={Comm. Partial Differential Equations},
   volume={44},
   date={2019},
   number={4},
   pages={335--365},
   issn={0360-5302},
   review={\MR{3941228}},
   doi={10.1080/03605302.2018.1547745},
}

\bib{DT20}{article}{
   author={De Rosa, Luigi},
    author={Tione, Riccardo},
   title={Sharp energy regularity and typicality results for Hölder solutions of incompressible Euler equations},
   journal={Analysis and  PDE},
   date={2020},
   doi={Accepted},
}

\bib{Is2013}{article}{
   author={Isett, Philip},
   title={Regularity in time along the coarse scale flow for the incompressible Euler equations},
   journal={arXiv preprint, https://arxiv.org/abs/1307.0565},
}

\bib{Is2018}{article}{
   author={Isett,Philip},
   title={A proof of Onsager's conjecture},
   journal={Ann. of Math. (2)},
   volume={188},
   date={2018},
   number={3},
   pages={871--963},
   issn={0003-486X},
   review={\MR{3866888}},
   doi={10.4007/annals.2018.188.3.4},
}

\bib{IO16}{article}{
   author={Isett, Philip},
   author={Oh, Sung-Jin},
   title={On nonperiodic Euler flows with H\"{o}lder regularity},
   journal={Arch. Ration. Mech. Anal.},
   volume={221},
   date={2016},
   number={2},
   pages={725--804},
   issn={0003-9527},
   review={\MR{3488536}},
   doi={10.1007/s00205-016-0973-3},
}

\bib{K41}{article}{
   author={Kolmogorov, Andrej Nikolaevič},
   title={The local structure of turbulence in an incompressible viscous fluid},
   journal={Acad. Sci. URSS (N.S.)},
   date={1941},
   number={30},
   pages={301--305},
}

\bib{MS18}{article}{
    author = {Modena, Stefano},
    author={ Sz\'{e}kelyhidi Jr., L\'{a}szl\'{o}},
     title = {Non-uniqueness for the transport equation with {S}obolev
              vector fields},
   journal = {Ann. PDE},
    volume = {4},
      date = {2018},
    number = {2},
     pages = {Paper No. 18, 38},
      issn = {2524-5317},
  review = {\MR{3884855}},
       doi = {10.1007/s40818-018-0056-x},
}
		
\bib{N20}{article}{
    author = {Novack, Matthew},
     title = {Nonuniqueness of weak solutions to the 3 dimensional
              quasi-geostrophic equations},
   journal = {SIAM J. Math. Anal.},
    volume = {52},
      date = {2020},
    number = {4},
     pages = {3301--3349},
      issn = {0036-1410},
  review = {\MR{4126319}},
       DOI = {10.1137/19M1281009},
}

\end{biblist}
\end{bibdiv}

\end{document}